\newtheorem{lemma}{Lemma}
\newtheorem{theorem}{Theorem}
\newtheorem{corollary}{Corollary}
\theoremstyle{definition}
\theoremstyle{remark}
\author{Riley Badenbroek \and Joachim Dahl}
\title{An Algorithm for Nonsymmetric Conic Optimization Inspired by MOSEK}
\DeclareMathOperator{\diff}{d\!}
\DeclareMathOperator{\interior}{int}
\algnewcommand\Input{\item[\bf Input:] }
\algnewcommand\Output{\item[\bf Output:] }
\newcommand{\mue}{\mu^\text{e}}
\newcommand{\muetilde}{\tilde{\mu}^\text{e}}
\newcommand{\supaff}{\text{aff}}
\newcommand{\supcen}{\text{cen}}
\newcommand{\suppred}{\text{pred}}
\newcommand{\supcor}{\text{cor}}
\newcommand{\subone}{+}
\newcommand{\subtwo}{{++}}
\newcommand{\primbar}{F}
\newcommand{\primgrad}{F'}
\newcommand{\primhess}{F''}
\newcommand{\primtens}{F'''}
\newcommand{\dualbar}{F_*}
\newcommand{\dualgrad}{F_*'}
\newcommand{\dualhess}{F_*''}
\newcommand{\primbarext}{F^\text{e}}
\newcommand{\dualbarext}{F_*^\text{e}}
\newcommand{\dualgradext}{(F_*^\text{e})'}
\newcommand{\primdelta}{\delta^{\text{P}}}
\newcommand{\dualdelta}{\delta^{\text{D}}}
\newcommand{\primup}{u^{\text{P}}}
\newcommand{\dualup}{u^{\text{D}}}
\newcommand{\primlow}{l^{\text{P}}}
\newcommand{\duallow}{l^{\text{D}}}
\newcommand{\normub}{\theta}
\newcommand{\zinit}{z_0}
\newcommand{\xinit}{x_0}
\newcommand{\sinit}{s_0}
\newcommand{\yinit}{y_0}
\newcommand{\tauinit}{\tau_0}
\newcommand{\kappainit}{\kappa_0}
\begin{document}

\maketitle

\begin{abstract}
	We analyze the scaling matrix, search direction, and neighborhood used in MOSEK's algorithm for nonsymmetric conic optimization [Dahl and Andersen, 2019]. It is proven that these can be used to compute a near-optimal solution to the homogeneous self-dual model in polynomial time.
\end{abstract}


\section{Introduction}
In 1984, Karmarkar \cite{karmarkar1984new} introduced an interior point method to solve linear programming problems in polynomial time. 
Nesterov and Nemirovskii \cite{nesterov1994interior} generalized the ideas behind this method to convex optimization, recognizing that self-concordant barrier functions are fundamental to the analysis of interior point methods.

Meanwhile, researchers found that for linear optimization, primal-dual interior point methods generally outperformed their primal-only or dual-only counterparts. Naturally, they tried to extend these primal-dual methods to convex conic optimization.
A major breakthrough in this area is due to Nesterov and Todd \cite{nesterov1997self,nesterov1998primal}, who introduced search directions for self-scaled cones that perform well in practice.
G\"uler \cite{guler1996barrier} showed that these self-scaled cones are the same as symmetric cones, i.e. convex self-dual cones with a transitive automorphism group. There are five irreducible symmetric cones (up to isomorphism) \cite{jordan1934neumann}, only two of which are used for optimization in practice: the second-order cone, and the cone of symmetric positive semidefinite matrices. Since linear programming is a special case of semidefinite programming, it can also be written as an optimization problem over a symmetric cone.

However, the Nesterov-Todd directions are defined at a primal-dual feasible point. Hence, given an optimization problem, one still needs to find such a feasible point, if it even exists. To circumvent this issue, Ye, Todd, and Mizuno \cite{ye1994nl} introduced a homogeneous model (also known as a self-dual embedding) for linear programming. Two major advantages of this homogeneous model are that no strictly feasible starting point is required, and that the algorithm can generate certificates for primal or dual infeasibility. Generalizations of this homogeneous model were proposed by e.g. Luo, Sturm, and Zhang \cite{luo1996duality}, De Klerk, Roos, and Terlaky \cite{deklerk1997initialization}, Potra and Sheng \cite{potra1998homogeneous}, and Andersen and Ye \cite{andersen1999homogeneous}. Nesterov, Todd, and Ye \cite{nesterov1999infeasible} provide a complexity analysis of a class of algorithms to solve the homogeneous model for convex conic optimization, where the cones do not have to be symmetric.

A broader class of cones than the symmetric cones can be found by dropping the self-duality requirement: the resulting convex cones with a transitive automorphism group are called homogeneous. Although Chua \cite{chua2003relating} showed that each optimization problem over a homogeneous cone can be rewritten as a semidefinite optimization problem, Chua \cite{chua2009algebraic} also proposed algorithms to exploit this structure directly. An even broader class of cones is formed by the hyperbolicity cones, which G\"uler \cite{guler1997hyperbolic} showed contains the homogeneous cones. Moreover, G\"uler also proposed methods to solve optimization problems over these cones.

In this paper, we consider cones that are more general than the hyperbolicity cones. 
We only assume that the cones are proper and admit a logarithmically homogeneous self-concordant barrier.
These cones are referred to as nonsymmetric to emphasize we make few structural assumptions about them.
Interesting examples of nonsymmetric cones include the exponential cone 
\begin{equation*}
	\left\{ x \in \mathbb{R}^3 : x_1 \geq x_2 e^{x_3/x_2}, x_2 \geq 0 \right\},
\end{equation*}
and the three-dimensional power cone
\begin{equation*}
	\left\{ x \in \mathbb{R}^3 : x_1^a x_2^{1-a} \geq | x_3 | \right\},
\end{equation*}
for some $a \in (0,1)$.
Many convex optimization problems occurring in practice can be modeled using these two cones and the symmetric cones mentioned above. For example, Lubin et al. \cite{lubin2016extended} show that all convex problems from the MINLPLIB2 library can be expressed in this manner.
While the algorithms by Nesterov, Todd, and Ye \cite{nesterov1999infeasible} can be used to solve nonsymmetric problems, the size of the linear systems to be solved doubles compared to the symmetric case, increasing the computation time by a factor of eight.

A more recent approach by Nesterov \cite{nesterov2012towards} splits each iteration in two phases. In the correction phase, a strictly feasible primal-dual pair and an associated scaling point are computed. These are used in the prediction phase to find a primal-dual direction that is approximately tangential to the central path. This algorithm has the drawback that it assumes the existence of a strictly feasible primal-dual point, and requires a strictly feasible primal starting point.

Skajaa and Ye \cite{skajaa2015homogeneous} built on these two methods by proposing an algorithm that solves the homogeneous model and only uses the primal barrier, which means the size of the linear systems is the same as in the symmetric case. While theoretically attractive, this algorithm still requires centering steps. Serrano \cite{serrano2015algorithms} proposed a variant of Skajaa and Ye's algorithm that no longer uses centering steps, and implements this for the exponential cone in the ECOS solver. 

For two points $x$ and $s$ in a symmetric cone, we can always find a scaling point $w$ such that the Hessian of the cone's barrier at $w$ maps $x$ to $s$ and the gradient at $s$ to the gradient at $x$. Moreover, the Hessian at $w$ is bounded by the Hessians at $x$ and $s$ when $x$ and $s$ lie close to the central path.
For a general convex cone, we can define a similar scaling which maps $x$ to $s$, but generally not the gradient at $s$ to the gradient at $x$.
Thus, the main hurdle in generalizing the Nesterov-Todd directions to nonsymmetric cones is to find a mapping from $x$ to $s$ and the gradient at $s$ to the gradient at $x$, and to ensure that this mapping is in some sense close to the primal and dual Hessians to guarantee polynomial-time convergence. Tun\c{c}el \cite{tunccel2001generalization} proposed to form a such scaling matrix by low-rank updates to an arbitrary positive definite matrix.
Myklebust and Tun\c{c}el \cite{myklebust2016interior} provide explicit bounds on the scaling matrix if the current iterates $x$ and $s$ lie close to the central path.

Dahl and Andersen \cite{dahl2019primal} expand on the ideas from Skajaa and Ye \cite{skajaa2015homogeneous}, but define their search direction using the scaling matrices analyzed by Tun\c{c}el \cite{tunccel2001generalization} and Myklebust and Tun\c{c}el \cite{myklebust2016interior}. The resulting algorithm is implemented in MOSEK for the exponential and three-dimensional power cones. Dahl and Andersen provide no complexity analysis of their algorithm, but do show strong empirical performance.

The purpose of this work is to provide some theoretical foundation for the algorithm implemented in MOSEK for nonsymmetric conic optimization. Practical implementations will always differ from theoretical algorithms, so we focus on particular elements of Dahl and Andersen's algorithm: their scaling matrix, search direction, and neighborhood. With these ingredients, we will define an algorithm for nonsymmetric conic optimization that approximately solves the homogeneous model in polynomial time.

The remainder of this paper is structured as follows. After introducing the preliminaries in Section \ref{sec:Preliminaries}, we state the algorithm we will analyze in Section \ref{sec:Algorithm}. Moreover, in Section \ref{sec:Algorithm} we also introduce the assumptions that should hold at the start of each iteration. Section \ref{sec:ScalingMatrix} will cover the first step in our analysis: finding bounds on the scaling matrix as in Myklebust and Tun\c{c}el \cite{myklebust2016interior}. These bounds will be used in 
Sections \labelcref{sec:Predictor,sec:Corrector} 
to analyze what happens to the starting assumptions after our steps are taken. Section \ref{sec:Predictor} will focus on the predictor step, while the corrector step is analyzed in Section \ref{sec:Corrector}. Finally, we combine the results of the preceding sections in Section \ref{sec:Analysis} to prove our algorithm terminates in polynomial time.

\section{Preliminaries}
\label{sec:Preliminaries}
Throughout this paper, $\langle \cdot, \cdot \rangle$ will denote the dot inner product on $\mathbb{R}^n$, and $\| \cdot \|$ will be the Euclidean norm on $\mathbb{R}^n$. Let $\mathbb{S}^n \subset \mathbb{R}^{n \times n}$ denote the space of real symmetric matrices of size $n \times n$.
For a symmetric, positive definite matrix $S \in \mathbb{S}^{n}$, we define the induced norm
\begin{equation*}
	\| v \|_S \coloneqq \sqrt{\langle v, S v \rangle},
\end{equation*}
for any $v \in \mathbb{R}^n$. It is not hard to see that the dual norm to $\| \cdot \|_S$ satisfies
\begin{equation*}
	\| w \|_S^* \coloneqq \sup_{v : \| v \|_S \leq 1} |\langle v, w \rangle| = \| w \|_{S^{-1}},
\end{equation*}
for $w \in \mathbb{R}^n$. By definition of the dual norm, we have 
\begin{equation}
	\label{eq:CauchySchwarz}
	| \langle v, w \rangle | \leq \| v \|_S \| w \|_S^*,
\end{equation}
for any positive definite $S \in \mathbb{S}^{n}$.

We consider the primal conic optimization problem
\begin{equation}
	\label{eq:PrimalProblem}
	\inf_x \left\{ \langle c, x \rangle : A x = b, x \in K \right\}, \tag{P}
\end{equation}
and its dual
\begin{equation}
	\label{eq:DualProblem}
	\sup_{y,s} \left\{ \langle b, y \rangle : A^\top y + s = c, s \in K^* \right\}, \tag{D}
\end{equation}
where $c \in \mathbb{R}^n$, $A \in \mathbb{R}^{m \times n}$, $b \in \mathbb{R}^m$, $K \subset \mathbb{R}^n$ is a proper cone (i.e. convex, pointed, closed, and full-dimensional), and $K^* \coloneqq \{ s \in K^* : \langle x, s \rangle \geq 0 \,\forall x \in K \}$ is its dual cone.

\subsection{Self-Concordant Barriers}
Let $\primbar: \interior K \to \mathbb{R}$ be a thrice differentiable, strictly convex function. Through its Hessian $\primhess$, this function gives rise to a family of norms: for each $x \in \interior K$ and any $v \in \mathbb{R}^n$, we define $\| v \|_x \coloneqq \| v \|_{\primhess(x)} = \sqrt{\langle v, \primhess(x) v \rangle}$.
We assume that $\primbar$ is a logarithmically homogeneous self-concordant barrier (LHSCB) for $K$ with parameter $\nu$. This means $\primbar$ should satisfy
\begin{equation}
	\label{eq:DefHomogeneity}
	\primbar(t x) = \primbar(x) - \nu \log t,
\end{equation}
for all $t > 0$ and $x \in \interior K$, the Dikin ellipsoid $\{ x' \in \mathbb{R}^n : \| x' - x \|_x < 1 \} \subseteq \interior K$ for all $x \in \interior K$, and finally
\begin{equation}
	\label{eq:DefSelfConcordance}
	(1 - \| x' - x \|_x)^2 \primhess(x') \preceq \primhess(x) \preceq \frac{1}{(1-  \| x' - x \|_x)^2} \primhess(x'),
\end{equation}
for all $x, x' \in \interior K$ with $\| x' - x \|_x < 1$. We will call a LHSCB with parameter $\nu$ a $\nu$-LHSCB for short. It was shown by Nesterov and Nemirovskii \cite{nesterov1994interior} that any barrier parameter $\nu$ satisfies $\nu \geq 1$.

Standard consequences of the homogeneity \eqref{eq:DefHomogeneity} of $\primbar$ include that for all $x \in \interior K$,
\begin{equation}
	\label{eq:PropertiesLogHomogeneity}
	\primhess(x) x = - \primgrad(x) \qquad \text{and} \qquad \langle x, -\primgrad(x) \rangle = \nu.
\end{equation}

If $\primbar$ is a $\nu$-LHSCB for $K$, then the conjugate $\dualbar: \interior K^* \to\mathbb{R}$,
\begin{equation*}
	\dualbar(s) \coloneqq \sup_x \left\{ - \langle s, x\rangle - \primbar(x) : x \in \interior K \right\},
\end{equation*}
is a $\nu$-LHSCB for $K^*$. Similar to $\primbar$, $\dualbar$ also induces a family of norms through its Hessian $\dualhess$. For any $s \in \interior K^*$ and $v \in \mathbb{R}^n$, we define $\| v \|_s \coloneqq \| v \|_{\dualhess(s)} = \sqrt{\langle v, \dualhess(s) v \rangle}$. (It will be clear from context whether the local norm should depend on $\primhess$ or $\dualhess$.)
Moreover, it can be shown that $\{ -\primgrad(x) : x \in \interior K\} = \interior K^*$, and the gradient and Hessian of $\dualbar$ satisfy
\begin{equation}
	\label{eq:PropertiesConjugate}
	-\dualgrad(-\primgrad(x)) = x \qquad \text{and} \qquad \dualhess(-\primgrad(x)) = \primhess(x)^{-1},
\end{equation}
for any $x \in \interior K$. With this in mind, it makes sense to define the \emph{shadow iterates}
\begin{equation*}
	\tilde{x} \coloneqq - \dualbar(s) \in \interior K, \qquad \text{and} \qquad \tilde{s} \coloneqq - \primbar(x) \in \interior K^*.
\end{equation*}

It will prove interesting to measure the distance between $x$ and $\tilde{x}$, up to appropriate scaling. To this end, define
\begin{equation}
	\label{eq:DefMuMuTilde}
	\mu \coloneqq \frac{\langle x, s \rangle}{\nu} \qquad \text{and} \qquad \tilde{\mu} \coloneqq \frac{\langle \tilde{x}, \tilde{s} \rangle}{\nu}.
\end{equation}
As remarked by Tun{\c{c}}el \cite[Lemma 4.1]{tunccel2001generalization}, Nesterov and Todd \cite[Remark 1]{nesterov1998primal} showed that for any $x \in \interior K$ and $s \in \interior K^*$, we have
\begin{equation}
	\mu \tilde{\mu}\geq 1,
	\label{eq:MuTildeMuGreaterThanOne}
\end{equation}
with equality if and only if $x = \mu \tilde{x}$ (and hence $s = \mu \tilde{s}$).

\subsection{L\"owner Order and an Operator Norm}

For $P, Q \in \mathbb{S}^n$, we write $P \succeq Q$ to denote that $P-Q$ is positive semidefinite, and $P \succ Q$ to denote that $P-Q$ is positive definite. This partial ordering of $\mathbb{S}^n$ is referred to as the L\"owner order.
The following result by Horn and Johnson \cite{horn2012matrix} shows how the L\"owner order behaves when taking inverses.
\begin{lemma}[{\cite[Corollary 7.7.4(a)]{horn2012matrix}}]
	\label{lemma:OrderInverseMatrix}
	Let $P, Q \succ 0$ be positive definite matrices. Then, $P \preceq Q$ if and only if $P^{-1} \succeq Q^{-1}$.
\end{lemma}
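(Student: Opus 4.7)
The plan is to reduce to a single variable question about eigenvalues by performing a simultaneous congruence of $P$ and $Q$, using the fact that $P$ (being positive definite) has a positive definite square root. The key observation is that the scalar statement $t \le u \iff t^{-1} \ge u^{-1}$ holds for $t, u > 0$, and this lifts to the matrix setting once we have reduced to a situation where the relevant matrix pencil is compared with the identity.

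First, I would argue by symmetry that it suffices to prove one implication, namely that $P \preceq Q$ implies $P^{-1} \succeq Q^{-1}$: the reverse implication then follows by applying the forward implication to $Q^{-1}$ and $P^{-1}$ in place of $P$ and $Q$, together with the fact that $(P^{-1})^{-1} = P$. For the forward implication, I would introduce $M \coloneqq Q^{-1/2} P Q^{-1/2}$, which is symmetric positive definite, and note that $P \preceq Q$ is equivalent to $M \preceq I$ by the congruence $x \mapsto Q^{-1/2} x$.

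Next I would diagonalize $M = U \Lambda U^\top$ with $U$ orthogonal and $\Lambda = \Diag(\lambda_1, \ldots, \lambda_n)$. The inequality $M \preceq I$ is equivalent to $\lambda_i \le 1$ for all $i$, which in turn is equivalent to $\lambda_i^{-1} \ge 1$ for all $i$, i.e., $M^{-1} \succeq I$. Undoing the congruence gives
\begin{equation*}
    M^{-1} = Q^{1/2} P^{-1} Q^{1/2} \succeq I,
\end{equation*}
and conjugating by $Q^{-1/2}$ yields $P^{-1} \succeq Q^{-1}$, as required.

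There is no real obstacle here; the only point that requires care is the initial use of the symmetric square root $Q^{-1/2}$ (which exists and is positive definite because $Q \succ 0$) to reduce a two-matrix comparison to a comparison between a symmetric matrix and the identity. Everything else reduces to the elementary fact that eigenvalue inversion reverses the order on the positive reals, combined with the invariance of the Löwner order under congruence by invertible matrices.
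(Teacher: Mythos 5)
The paper does not prove this lemma; it is imported verbatim as a citation to Horn and Johnson (Corollary 7.7.4(a) in \cite{horn2012matrix}), so there is no in-paper argument to compare against. Your proof is correct and is the standard one: the symmetry reduction to a single implication is sound (apply the forward implication to $Q^{-1}$ and $P^{-1}$ and use $(P^{-1})^{-1}=P$), the congruence by $Q^{-1/2}$ correctly reduces $P \preceq Q$ to $M \preceq I$, and the spectral argument on $M$ together with the observation $M^{-1} = Q^{1/2} P^{-1} Q^{1/2}$ completes it. This matches the textbook proof the citation points to, so there is no divergence of method to discuss.
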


We will sometimes try to ``sandwich'' a matrix between two other matrices, in the sense of the L\"owner order. To this end, let $P, Q \in \mathbb{S}^n$ where $P \succ 0$, and define the operator norm
\begin{equation}
	\label{eq:DefOperatorNorm}
	\| Q \|_P \coloneqq \sup_{\| u \|_P \leq 1} \| Q u \|_{P}^*.
\end{equation}
(This norm is also used by Myklebust and Tun\c{c}el \cite{myklebust2016interior}.) The reason for introducing this norm is given by the following lemma.
\begin{lemma}
	\label{lemma:NormToMatrixInequality}
	Let $P, Q \in \mathbb{S}^n$ where $P \succ 0$, and $\epsilon > 0$. Then, $\| P - Q \|_P \leq \epsilon$ if and only if
		 $(1-\epsilon) P \preceq Q \preceq (1+\epsilon) P$.
\end{lemma}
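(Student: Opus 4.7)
The plan is to reduce the question to a statement about the ordinary Euclidean operator norm of a single symmetric matrix, namely $M := P^{-1/2}(P - Q) P^{-1/2} = I - P^{-1/2} Q P^{-1/2}$, after which both directions of the equivalence follow simultaneously.

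First I would unpack the definitions. Since $\|w\|_P^{*} = \|w\|_{P^{-1}}$, the expression inside the supremum defining $\|P - Q\|_P$ is $\|(P-Q)u\|_{P^{-1}} = \| P^{-1/2}(P-Q) u\|$, where $\|\cdot\|$ denotes the Euclidean norm. Performing the change of variables $v = P^{1/2} u$, the constraint $\|u\|_P \leq 1$ becomes $\|v\| \leq 1$, and
\begin{equation*}
	\| P - Q \|_P = \sup_{\|v\| \leq 1} \| P^{-1/2} (P - Q) P^{-1/2} v \| = \| M \|_{\text{op}},
\end{equation*}
where $\|\cdot\|_{\text{op}}$ is the standard Euclidean operator norm. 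Because $M \in \mathbb{S}^n$, this operator norm equals its spectral radius, so $\|P - Q\|_P \leq \epsilon$ is equivalent to every eigenvalue of $M$ lying in $[-\epsilon, \epsilon]$, i.e.\ to $-\epsilon I \preceq M \preceq \epsilon I$.

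Finally, conjugation by the positive definite matrix $P^{1/2}$ preserves the L\"owner order in both directions, so
\begin{equation*}
	-\epsilon I \preceq M \preceq \epsilon I \iff -\epsilon P \preceq P - Q \preceq \epsilon P \iff (1-\epsilon) P \preceq Q \preceq (1+\epsilon) P,
\end{equation*}
which chains together with the previous equivalence to yield the claim. There is no real obstacle beyond the change of variables; the only point that deserves care is the substitution $v = P^{1/2} u$ and the identification of $\|\cdot\|_P^{*}$ with $\|\cdot\|_{P^{-1}}$, both of which are already established earlier in the preliminaries.
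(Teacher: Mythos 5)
Your proof is correct. It differs in structure from the paper's: the paper proves the two implications separately, using a Cauchy--Schwarz estimate on $\inf_{\|u\|_P = 1}\langle u, (Q-(1-\epsilon)P)u\rangle$ for the forward direction and a change-of-variables plus Rayleigh-quotient/eigenvector argument for the converse. You instead identify $\|P-Q\|_P$ with the Euclidean operator norm of the symmetric matrix $M = P^{-1/2}(P-Q)P^{-1/2}$ once and for all, and then invoke the standard equivalence $\|M\|_{\mathrm{op}} \le \epsilon \iff -\epsilon I \preceq M \preceq \epsilon I$, from which both implications drop out simultaneously by conjugation with $P^{1/2}$. The paper's backward direction is in fact essentially the same computation you do (note the paper's substitution should read $v = P^{1/2}u$, not $P^{-1/2}u$, for the displayed equality to hold), so the real difference is that you replace the paper's separate Cauchy--Schwarz argument for the forward direction by the two-sided spectral characterization. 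Your version is more compact and symmetric; the paper's is slightly more elementary in the forward direction since it avoids invoking attainment at an eigenvector, but both ultimately rest on reducing $\|\cdot\|_P$ to the spectral norm of $M$.
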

\begin{proof}
	Assume $\| P - Q \|_P \leq \epsilon$. To show $Q - (1-\epsilon)P \succeq 0$, it suffices to note that 
	by \eqref{eq:CauchySchwarz},
	\begin{align*}
		\inf_{\| u \|_P = 1} \langle u, \left(Q - (1-\epsilon)P \right) u \rangle
		&\geq \inf_{\| u \|_P = 1} \langle u, (Q-P) u \rangle + \inf_{\| u \|_P = 1} \langle u, \epsilon P u \rangle\\
		&\geq \inf_{\| u \|_P = 1} - \| u \|_P \| (Q-P) u \|_{P}^*  + \inf_{\| u \|_P = 1} \epsilon \| u \|_P^2 \\
		&= - \sup_{\| u \|_P = 1} \| (Q-P) u \|_{P}^* + \epsilon\\
		&\geq - \sup_{\| u \|_P \leq 1} \| (Q-P) u \|_{P}^* + \epsilon\\
		&\geq - \epsilon + \epsilon = 0.
	\end{align*}
	Therefore $\inf_{\| u \|_P \leq 1} \langle u, \left(Q - (1-\epsilon)P \right) u \rangle \geq 0$. Proving $(1+\epsilon)P - Q \succeq 0$ can be done similarly.
	
	Next, assume $(1-\epsilon) P \preceq Q \preceq (1+\epsilon) P$, i.e. $- \epsilon P \preceq P-Q \preceq \epsilon P$. Hence, for any $u \in \mathbb{R}^n$ with $\| u \|_P \leq 1$,
	we have $| \langle u, (P-Q) u \rangle | \leq \epsilon$. Using a transformation $v = P^{-1/2} u$,
	\begin{equation*}
		\epsilon \geq \sup_{u \neq 0} \frac{| \langle u, (P-Q) u \rangle |}{\| u \|_P^2} = \sup_{v \neq 0} \frac{| \langle v, P^{-1/2} (P-Q) P^{-1/2} v \rangle|}{\| v \|^2}.
	\end{equation*}
	It is well known (see e.g. Horn and Johnson \cite[Property 1.2.9]{horn1994topics}) that the supremum on the right hand side is attained at some eigenvector $v_*$ of $P^{-1/2} (P-Q) P^{-1/2}$ with norm one. Hence,
	\begin{equation*}
		\epsilon \geq \sup_{v \neq 0} \frac{| \langle v, P^{-1/2} (P-Q) P^{-1/2} v \rangle|}{\| v \|^2} = \sup_{v \neq 0} \frac{\| v \| \| P^{-1/2} (P-Q) P^{-1/2} v \|}{\| v \|^2} = \sup_{u \neq 0} \frac{\| (P-Q) u \|_{P}^*}{\| u \|_P}. \qedhere
	\end{equation*}
\end{proof}

We establish some straightforward properties of the norm \eqref{eq:DefOperatorNorm}. For any vectors $v,w$ and invertible linear operator $P$, we have
\begin{equation*}
	\| v w^\top \|_P = \sup_{\| u \|_P \leq 1} \| v w^\top u \|_{P}^* = \| v \|_{P}^* \sup_{\| u \|_P \leq 1} \langle w, u \rangle = \| v \|_{P}^* \| w \|_{P}^*,
\end{equation*}
and therefore, as Myklebust and Tun\c{c}el \cite{myklebust2016interior} showed,
\begin{equation}
	\label{eq:DifferenceOuterProductsNorm}
	\| vv^\top - ww^\top \|_P = \left\| \tfrac{1}{2} (v+w)(v-w)^\top + \tfrac{1}{2} (v-w)(v+w)^\top \right\|_P \leq \| (v+w)(v-w)^\top \|_P = \| v+w \|_{P}^* \| v-w \|_{P}^*.
\end{equation}
Moreover, for any $t > 0$,
\begin{equation}
	\label{eq:ScalingOperatorNorm}
	\| Q \|_{t P} = \sup_{\| u \|_{tP} \leq 1} \| Q u \|_{tP}^* = \sup_{\| \sqrt{t} u \|_{P} \leq 1} \| Q u / \sqrt{t}\|_{P}^* = \sup_{\| v \|_P \leq 1} \| Q v / t\|_{P}^* = \frac{1}{t} \| Q \|_P.
\end{equation}

\section{Algorithm Statement}
\label{sec:Algorithm}
In this section, we propose an algorithm to solve the pair \eqref{eq:PrimalProblem} and \eqref{eq:DualProblem}. In fact, we will solve a homogeneous model that will be introduced in Section \ref{subsec:HomogeneousModel}. The central path for this homogeneous model will be defined in Section \ref{subsec:CentralPath}, after which we outline the algorithm in Section \ref{subsec:SearchDirections}.

\subsection{Homogeneous Model}
\label{subsec:HomogeneousModel}
To solve \eqref{eq:PrimalProblem} and \eqref{eq:DualProblem}, we define the linear operator
\begin{equation*}
	G(y, x, \tau, s, \kappa) \coloneqq \begin{bmatrix}
	0 & A & -b\\
	-A^\top & 0 & c\\
	b^\top &-c^\top & 0
	\end{bmatrix} \begin{bmatrix} y \\ x \\ \tau \end{bmatrix} - \begin{bmatrix} 0 \\ s \\ \kappa \end{bmatrix}.
\end{equation*}
Then, the solutions to the homogeneous self-dual model, which are elements of the set
\begin{equation}
	\label{eq:HomogeneousModel}
	\left\{ (y, x, \tau, s, \kappa) \in \mathbb{R}^m \times K \times \mathbb{R}_+ \times K^* \times \mathbb{R}_+ : G(y, x, \tau, s, \kappa) = 0 \right\},
\end{equation}
have the following properties (see e.g. Skajaa and Ye \cite[Lemma 1]{skajaa2015homogeneous}):
\begin{enumerate}
	\item $\langle x, s \rangle + \tau \kappa = 0$;
	\item If $\tau > 0$, then $x/\tau$ is an optimal solution to \eqref{eq:PrimalProblem} and $(y,s)/\tau$ is an optimal solution to \eqref{eq:DualProblem};
	\item If $\kappa > 0$, then either $\langle b, y \rangle > 0$ or $\langle c, x \rangle < 0$, or both. If $\langle b, y \rangle > 0$, \eqref{eq:PrimalProblem} is infeasible. If $\langle c, x \rangle < 0$, \eqref{eq:DualProblem} is infeasible.
\end{enumerate}
In other words, finding an element of the set \eqref{eq:HomogeneousModel} where $\tau > 0$ or $\kappa > 0$ suffices to solve the primal-dual pair \eqref{eq:PrimalProblem} and \eqref{eq:DualProblem}. We will therefore interpret $G(y, x, \tau, s, \kappa)$ as the residual associated with the solution $(y, x, \tau, s, \kappa)$, which we would like to be the zero vector.

Assume we have initial points $\xinit \in \interior K$ and $\sinit \in \interior K^*$, such that $\xinit = -\dualgrad(\sinit)$, and hence $\sinit = -\primgrad(\xinit)$. 
MOSEK \cite{dahl2019primal} chooses $\xinit$ and $\sinit$, along with $\yinit$, $\tauinit$, and $\kappainit$ satisfying
\begin{equation}
	\label{eq:DefInitialPoint}
	\xinit = \sinit = -\primgrad(\xinit) = -\dualgrad(\sinit), \qquad \yinit = 0, \qquad \tauinit = \kappainit = 1,
\end{equation}
which admits a solution for the five cones that MOSEK supports. 
One of the perks of this choice is that $\langle \xinit, \sinit \rangle = \langle \xinit, - \primgrad(\xinit) \rangle = \nu$, meaning that the initial complementarity is known. 
We define the central path for the homogeneous model as
\begin{equation}
	\label{eq:CentralPath}
	\left\{ (y, x, \tau, s, \kappa) \in \mathbb{R}^m \times K \times \mathbb{R}_+ \times K^* \times \mathbb{R}_+ : \exists t \in (0, 1]: 
	\begin{array}{l}
		G(y, x, \tau, s, \kappa) = t G(\yinit, \xinit, \tauinit, \sinit, \kappainit) \\
		x = - t \dualgrad(s), s = - t \primgrad(x)\\
		\kappa \tau = t
	\end{array}
	\right\}.
\end{equation}
Informally, the condition $G(y, x, \tau, s, \kappa) = t G(\yinit, \xinit, \tauinit, \sinit, \kappainit)$ encodes that the residual norm should decrease as $t$ decreases, and the other conditions assure the ``centrality'' of the solution. We will discuss how to measure this centrality in the next section.

\subsection{The Central Path}
\label{subsec:CentralPath}
Suppose we have a $\nu$-LHSCB $\primbar$ for $K$ and a $\nu$-LHSCB $\dualbar$ for $K^*$. Since any element of \eqref{eq:HomogeneousModel} satisfies $(x, \tau) \in K \times \mathbb{R}_+$ and $(s, \kappa) \in K^* \times \mathbb{R}_+$, it will prove useful to extend our barriers $\primbar$ and $\dualbar$ to the domains $ K \times \mathbb{R}_+$ and $K^* \times \mathbb{R}_+$, respectively. A straightforward way to do this is to define
\begin{equation*}
	\primbarext(x, \tau) \coloneqq \primbar(x) - \log(\tau) \qquad \text{and} \qquad \dualbarext(s, \kappa) \coloneqq \dualbar(s) - \log(\kappa),
\end{equation*}
such that $\primbarext$ and $\dualbarext$ are $(\nu+1)$-LHSCBs for $K \times \mathbb{R}_+$ and $K^* \times \mathbb{R}_+$, respectively. 
(The superscript ``e'' refers to these ``extended'' cones.)
For these cones and barriers, the quantities analogous to \eqref{eq:DefMuMuTilde} are
\begin{equation*}
	\mue \coloneqq \frac{\langle x, s \rangle + \tau \kappa}{\nu + 1} \qquad \text{and} \qquad \muetilde \coloneqq \frac{\langle \tilde{x}, \tilde{s} \rangle + 1/(\tau \kappa)}{\nu + 1}.
\end{equation*}
Because $K \times \mathbb{R}_+$ and $K^* \times \mathbb{R}_+$ are each other's dual cones, the inequality \eqref{eq:MuTildeMuGreaterThanOne} carries over to this setting. To be explicit, we must have $\mue \muetilde \geq 1$, with equality if and only if $(x,\tau)$ equals
\begin{equation*}
	- \mue \dualgradext(s, \kappa) = \begin{bmatrix} - \mue \dualgrad(s) \\ \mue / \kappa \end{bmatrix} = \begin{bmatrix} \mue \tilde{x} \\ \mue / \kappa \end{bmatrix},
\end{equation*}
in which case we also have $s = \mue \tilde{s}$.
Thus, if $\mue \muetilde = 1$, the centrality conditions in \eqref{eq:CentralPath} are satisfied. Since in general $\mue \muetilde \geq 1$, we could define a neighborhood of the central path as the points satisfying $\beta \mue \muetilde \leq 1$ for some $\beta \in (0,1]$. For the sake of simplicity, MOSEK \cite[Section 3]{dahl2019primal} instead 
assumes
\begin{equation}
	\label{eq:MosekAssumptions}
	\frac{\beta}{\tau \kappa} \frac{\langle x, s \rangle + \tau \kappa}{\nu+1} \leq 1 \qquad \text{and} \qquad \beta \langle \tilde{x}, \tilde{s} \rangle \frac{\langle x, s \rangle + \tau \kappa}{\nu+1} \leq \nu,
\end{equation}
which implies
$\beta \mue \muetilde \leq 1$ for any $\beta \in (0,1]$. Some other useful properties of the assumptions \eqref{eq:MosekAssumptions} are given in the next lemma.

\begin{lemma}
	\label{lemma:MuMueBounds}
	Let $K \subset \mathbb{R}^n$ be a proper cone admitting a $\nu$-LHSCB $\primbar$. Let $x \in \interior K$, $s \in \interior K^*$, and $\tau, \kappa > 0$. Assume $\beta \mue \leq \tau \kappa$ and $\beta \mue \tilde{\mu} \leq 1$ for some $\beta \in (0,1]$. 
	Then,
	\begin{equation*}
		\frac{\mu}{2 - \beta} \leq 
		\mue \leq \frac{\mu}{\beta}.
	\end{equation*}
\end{lemma}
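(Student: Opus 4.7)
The proof should be short, essentially a manipulation of the defining identities combined with \eqref{eq:MuTildeMuGreaterThanOne} and $\nu \geq 1$.

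For the \emph{lower bound}, the plan is to expand the defining identity $(\nu+1)\mue = \nu \mu + \tau\kappa$ and use the assumption $\tau\kappa \geq \beta \mue$ to obtain
\begin{equation*}
(\nu+1)\mue \;\geq\; \nu \mu + \beta \mue,
\end{equation*}
which rearranges to $\mue \geq \tfrac{\nu}{\nu + 1 - \beta}\,\mu$. It then remains to check that $\tfrac{\nu}{\nu + 1 - \beta} \geq \tfrac{1}{2-\beta}$ for every $\nu \geq 1$ and $\beta \in (0,1]$; cross-multiplying (both denominators are positive) reduces this to $(\nu-1)(1-\beta) \geq 0$, which holds since $\nu \geq 1$ and $\beta \leq 1$.

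For the \emph{upper bound}, the plan is to invoke \eqref{eq:MuTildeMuGreaterThanOne}, which gives $\tilde\mu \geq 1/\mu$. Combined with the hypothesis $\beta \mue \tilde\mu \leq 1$, this yields
\begin{equation*}
\beta \mue \,\frac{1}{\mu} \;\leq\; \beta \mue \tilde\mu \;\leq\; 1,
\end{equation*}
which rearranges directly to $\mue \leq \mu / \beta$.

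There is no real obstacle here; the only nonobvious point is the inequality $\tfrac{\nu}{\nu+1-\beta} \geq \tfrac{1}{2-\beta}$, which is where the assumption $\nu \geq 1$ on the barrier parameter enters, and the only external fact required is \eqref{eq:MuTildeMuGreaterThanOne} for the upper bound.
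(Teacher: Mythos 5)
Your proof is correct and is essentially the same argument as the paper's: both bound $\mue$ below via the identity $(\nu+1)\mue = \nu\mu + \tau\kappa$ and assumption \ref{ass:TauKappaBeta}, then use $\nu \geq 1$ to reduce $\tfrac{\nu}{\nu+1-\beta}$ to $\tfrac{1}{2-\beta}$, and both obtain the upper bound by combining \ref{ass:MuTildeBeta} with \eqref{eq:MuTildeMuGreaterThanOne}. Your explicit cross-multiplication verifying $(\nu-1)(1-\beta)\geq 0$ simply spells out what the paper leaves implicit.
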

\begin{proof}
	Since $\tau \kappa \geq \beta \mue$,
	\begin{equation*}
		\mue = \frac{\mu \nu}{\nu + 1} + \frac{\tau \kappa}{\nu + 1} \geq \frac{\mu \nu}{\nu + 1} + \frac{\beta \mue}{\nu + 1},
	\end{equation*}
	which shows
	\begin{equation*}
		\mue \geq \left(1 - \frac{\beta}{\nu + 1} \right)^{-1} \frac{\mu \nu}{\nu + 1} = \frac{\mu \nu}{\nu + 1 - \beta} \geq \frac{\mu}{2 - \beta},
	\end{equation*}
	where the last inequality uses $\nu \geq 1$. 
	To prove the second part of the claim, we use the assumption that $\beta \tilde{\mu} \mue \leq 1$. By \eqref{eq:MuTildeMuGreaterThanOne}, we have $\tilde{\mu} \geq 1/\mu$, and thus
	\begin{equation*}
		1 \geq \beta \tilde{\mu} \mue \geq \frac{\beta \mue}{\mu},
	\end{equation*}
	which completes the proof.
\end{proof}
Thus, for high values of $\beta$, we have $\mu \approx \mue$ 
under MOSEK's assumptions \eqref{eq:MosekAssumptions},
and therefore $x \approx \mu \tilde{x}$ and $s \approx \mu \tilde{s}$. We will often refer to the distance between $x$ and $\mu \tilde{x}$, and between $s$ and $\mu \tilde{s}$, so we introduce the following shorthand notation:
\begin{equation*}
	\primdelta \coloneqq x - \mu \tilde{x} \qquad \text{and} \qquad \dualdelta \coloneqq s - \mu \tilde{s}.
\end{equation*}

\subsection{Search Directions}
\label{subsec:SearchDirections}
We now proceed to the statement of the algorithm that will be analyzed in the remainder of this paper. At the start of every iteration, we assume the following holds for some fixed $\beta \in (0,1]$ and $\eta \in [0, 1)$.
\begin{enumerate}[label=(A\arabic*)]
	\item \label{ass:XSinCone} $x \in \interior K$ and $s \in \interior K^*$
	\item \label{ass:TauKappaBeta} $\beta \mue \leq \tau \kappa$
	\item \label{ass:TauKappaPositive} $\tau, \kappa > 0$
	\item \label{ass:MuTildeBeta} $\beta \mue \tilde{\mu} \leq 1$
	\item \label{ass:PrimDelta} $\| \primdelta \|_x \leq \eta$.
\end{enumerate}
Note that \labelcref{ass:XSinCone,ass:TauKappaPositive,ass:TauKappaBeta,ass:MuTildeBeta} are also imposed by MOSEK \cite[Section 3]{dahl2019primal}. Assumption \ref{ass:PrimDelta} is important to ``sandwich'' the primal-dual scaling matrix used by MOSEK. This will be elaborated on in Section \ref{sec:ScalingMatrix}.

Every iteration consists of two phases. In the first phase, we apply a simplified version of the search direction used in MOSEK \cite{dahl2019primal}. The second phase consists of taking one corrector step to return to the assumptions \labelcref{ass:XSinCone,ass:TauKappaPositive,ass:TauKappaBeta,ass:MuTildeBeta,ass:PrimDelta}. For the sake of brevity, let us collect all variables in a vector $z \coloneqq (y, x, \tau, s, \kappa)$.

The first phase is started by computing a scaling matrix
\begin{equation}
	 \label{eq:DefScalingMatrix}
	 W \coloneqq \mu \primhess(x) + \frac{ss^\top}{\nu \mu} - \frac{\mu \tilde{s} \tilde{s}^\top}{\nu} + \frac{\dualdelta (\dualdelta)^\top}{\langle \primdelta, \dualdelta \rangle} - \frac{\mu[\primhess(x) \tilde{x} - \tilde{\mu} \tilde{s}] [\primhess(x)\tilde{x} - \tilde{\mu} \tilde{s}]^\top}{\| \tilde{x} \|_x^2 - \nu \tilde{\mu}^2}.
\end{equation}
We refer to $W$ as a scaling matrix because it is serves a similar purpose as the scaling point does for symmetric cones. Most importantly, we have
\begin{equation*}
	W x = s \qquad \text{and} \qquad W \tilde{x} = \tilde{s}.
\end{equation*}
Dahl and Andersen \cite[Section 5]{dahl2019primal} derive a Cholesky factorization of $W$, thereby showing $W$ is positive definite.

The scaling matrix $W$ will be used in the definition of the search direction in this first phase.
The search direction consists of two parts: an affine direction $\Delta z^\supaff$ and a centering direction $\Delta z^\supcen$. The affine direction is the solution $\Delta z^\supaff$ to
\begin{subequations}
	\begin{align}
		G(\Delta z^\supaff) &= -G(z) \label{eq:AffineDirectionG}\\
		\tau \Delta \kappa^\supaff + \kappa \Delta\tau^\supaff &= - \tau \kappa \label{eq:AffineDirectionTauKappa}\\
		W \Delta x^\supaff + \Delta s^\supaff &= -s. \label{eq:AffineDirectionXS}
	\end{align}
\end{subequations}
It follows from \eqref{eq:AffineDirectionG} that moving in the direction $\Delta z^\supaff$ decreases the norm of the residual, which amounts to progress in solving the self-dual homogeneous model. The centering direction $\Delta z^\supcen$ is the solution to
\begin{subequations}
	\begin{align}
		G(\Delta z^\supcen) &= G(z) \label{eq:CenteringDirectionG}\\
		\tau \Delta \kappa^\supcen + \kappa \Delta\tau^\supcen &= \mue \label{eq:CenteringDirectionTauKappa}\\
		W \Delta x^\supcen + \Delta s^\supcen &= \mue \tilde{s}. \label{eq:CenteringDirectionXS}
	\end{align}
\end{subequations}
We see from \eqref{eq:CenteringDirectionG} that moving in direction $\Delta z^\supcen$ increases the norm of the residual, but it serves to keep us 
close to the central path.
We will combine these two directions to get the search direction in the first phase: for some $\gamma \in [0,1]$ to be fixed later, let
\begin{equation}
	\label{eq:DefPredictor}
	\Delta z^\suppred \coloneqq \Delta z^\supaff + \gamma \Delta z^\supcen.
\end{equation}
Then, for some $\alpha \in (0,1]$ also to be fixed later, we update $z$ to the new iterate
\begin{equation*}
	z_\subone \coloneqq z + \alpha \Delta z^\suppred = z + \alpha (\Delta z^\supaff + \gamma \Delta z^\supcen).
\end{equation*}

After the predictor phase, we will need a corrector to be sure that \labelcref{ass:PrimDelta,ass:MuTildeBeta} hold.
To define it, we will compute a scaling matrix similar to \eqref{eq:DefScalingMatrix} defined as
\begin{equation}
	 \label{eq:DefScalingMatrixOne}
	 W_\subone \coloneqq \mu_\subone \primhess(x_\subone) + \frac{s_\subone s_\subone^\top}{\nu \mu_\subone} - \frac{\mu_\subone \tilde{s}_\subone \tilde{s}_\subone^\top}{\nu} + \frac{\dualdelta_\subone (\dualdelta_\subone)^\top}{\langle \primdelta_\subone, \dualdelta_\subone\rangle} - \frac{\mu_\subone[\primhess(x_\subone) \tilde{x}_\subone - \tilde{\mu}_\subone \tilde{s}_\subone] [\primhess(x_\subone)\tilde{x}_\subone - \tilde{\mu}_\subone \tilde{s}_\subone]^\top}{\| \tilde{x}_\subone \|_{x_\subone}^2 - \nu \tilde{\mu}_\subone^2},
\end{equation}
where all quantities with a subscript ``$\subone$'' follow their original definition, but computed for the iterate $z_\subone$ instead of $z$.
The corrector $\Delta z^\supcor_\subone$ will be the solution to
\begin{subequations}
	\begin{align}
		G(\Delta z^\supcor_\subone) &= 0 \label{eq:CorrectorDirectionG}\\
		\tau_\subone \Delta \kappa^\supcor_\subone + \kappa_\subone \Delta\tau_\subone^\supcor &= 0 \label{eq:CorrectorDirectionTauKappa}\\
		W_\subone \Delta x^\supcor_\subone + \Delta s^\supcor_\subone &= \mu_\subone \tilde{s}_\subone - s_\subone. \label{eq:CorrectorDirectionXS}
	\end{align}
\end{subequations}
It can be seen from \eqref{eq:CorrectorDirectionG} that $\Delta z^\supcor_\subone$ does not change the residuals, so the progress made by the predictor is maintained. 
(Skajaa and Ye \cite{skajaa2015homogeneous} also use a corrector that satisfies \eqref{eq:CorrectorDirectionG}, but they take -- in our notation -- $\tau_\subone^2 \Delta\kappa^\supcor_\subone + \mue_\subone \Delta\tau^\supcor_\subone = -\kappa_\subone \tau_\subone^2 + \mue_\subone \tau_\subone$ and $ \mu_\subone \primhess(x_\subone) \Delta x^\supcor_\subone + \Delta s^\supcor_\subone = \mue_\subone \tilde{s}_\subone - s_\subone $ instead of \labelcref{eq:CorrectorDirectionTauKappa,eq:CorrectorDirectionXS}.) 
We take one full corrector step to arrive at
\begin{equation*}
	z_\subtwo \coloneqq z_\subone + \Delta z^\supcor_\subone.
\end{equation*}
This $z_\subtwo$ will be the starting point for the next iteration.

The algorithm in this section is summarized in Algorithm \ref{alg:MOSEKNonSymmetric}.

\begin{algorithm}[ht!]
	\caption{An Algorithm for Nonsymmetric Conic Optimization (based on MOSEK \cite{dahl2019primal})}
	\label{alg:MOSEKNonSymmetric}
	\begin{algorithmic}[1]
		\Input Predictor step size $\alpha \in (0,1]$, corrector step size $\gamma \in [0,1]$.
		
		\State $z \gets (\yinit, \xinit, \tauinit, \sinit, \kappainit)$ as in \eqref{eq:DefInitialPoint}
		
		\While{not done}
			\State Compute scaling matrix $W$ as in \eqref{eq:DefScalingMatrix}
			\State Find the solution $\Delta z^\supaff$ to \labelcref{eq:AffineDirectionG,eq:AffineDirectionTauKappa,eq:AffineDirectionXS}, and the solution $\Delta z^\supcen$ to \labelcref{eq:CenteringDirectionG,eq:CenteringDirectionTauKappa,eq:CenteringDirectionXS}
			\State $z_\subone \gets z + \alpha \Delta z^\suppred = z + \alpha (\Delta z^\supaff + \gamma \Delta z^\supcen)$
			\State Compute scaling matrix $W_\subone$ as in \eqref{eq:DefScalingMatrixOne}
			\State Find the solution $\Delta z^\supcor_\subone$ to \labelcref{eq:CorrectorDirectionG,eq:CorrectorDirectionTauKappa,eq:CorrectorDirectionXS}
			\State $z \gets z_\subtwo = z_\subone + \Delta z^\supcor_\subone$
		\EndWhile
	\end{algorithmic}
\end{algorithm} 


\section{Scaling Matrix}
\label{sec:ScalingMatrix}
The scaling matrix $W$ is formed by low-rank updates to $\mu \primhess(x)$. We would like that $W \approx \mu \primhess(x)$ and $W \approx \frac{1}{\mu} \dualhess(s)^{-1}$ to derive further properties of Algorithm \ref{alg:MOSEKNonSymmetric}. This section is concerned with finding positive scalars $\primup$, $\primlow$, $\dualup$, and $\duallow$ such that
\begin{equation}
	\primlow \mu \primhess(x) \preceq W \preceq \primup \mu \primhess(x) \qquad \text{and} \qquad \frac{\duallow}{\mu} \dualhess(s)^{-1} \preceq W \preceq \frac{\dualup}{\mu} \dualhess(s)^{-1},
	\label{eq:ScalingHessianAssumption}
\end{equation}
and similarly, positive scalars $\primup_\subone$, $\primlow_\subone$, $\dualup_\subone$, and $\duallow_\subone$ such that
\begin{equation}
	\primlow_\subone \mu_\subone \primhess(x_\subone) \preceq W_\subone \preceq \primup_\subone \mu_\subone \primhess(x_\subone) \qquad \text{and} \qquad \frac{\duallow_\subone}{\mu_\subone} \dualhess(s_\subone)^{-1} \preceq W_\subone \preceq \frac{\dualup_\subone}{\mu_\subone} \dualhess(s_\subone)^{-1}.
	\label{eq:ScalingHessianAssumptionOne}
\end{equation}

For instance, Myklebust and Tun{\c{c}}el \cite[Theorem 6.8]{myklebust2016interior} derive such bounds for a different scaling matrix than the one by Dahl and Andersen \cite{dahl2019primal} under the condition $\| s - \mu \tilde{s} \|_s \leq 1/64$. 
Note that through Lemma \ref{lemma:OrderInverseMatrix}, \eqref{eq:ScalingHessianAssumption}  would also give us bounds on $W^{-1}$ in terms of $\frac{1}{\mu} \primhess(x)^{-1}$ and $\mu \dualhess(s)$, and \eqref{eq:ScalingHessianAssumptionOne} would give bounds on $W_\subone^{-1}$ in terms of $\frac{1}{\mu_\subone} \primhess(x_\subone)^{-1}$ and $\mu_\subone \dualhess(s_\subone)$.

We start with the following crucial lemma from Myklebust and Tun\c{c}el \cite{myklebust2016interior}, where some minor details are different in our setting.
\begin{lemma}[{\cite[Theorem B.1(4)]{myklebust2016interior}}]
	\label{lemma:DeltaDToDeltaP}
	Let $K \subset \mathbb{R}^n$ be a proper cone admitting a $\nu$-LHSCB $\primbar$. Let $x \in \interior K$ and $s \in \interior K^*$.
	If $\| \primdelta \|_x < 1$, then
	\begin{equation}
		\label{eq:DeltaDToDeltaP}
		\| \mu \primhess(x) \primdelta - \dualdelta \|_x^* \leq \frac{\mu \| \primdelta \|_x^2}{(1- \| \primdelta \|_x)^3}.
	\end{equation}
\end{lemma}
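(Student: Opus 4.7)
The plan is to interpret $\mu \primhess(x) \primdelta - \dualdelta$ as the error in linearizing the gradient difference $\primgrad(x) - \primgrad(\mu\tilde{x})$. The crucial first step is to show
$$
	\primgrad(x) - \primgrad(\mu\tilde{x}) = \frac{\dualdelta}{\mu},
$$
so that the claim becomes a quantitative comparison between a gradient difference and its first-order approximation. To obtain this identity, I would differentiate the log-homogeneity relation \eqref{eq:DefHomogeneity} to get $\primgrad(\mu\tilde{x}) = \primgrad(\tilde{x})/\mu$, and apply the conjugate inversion \eqref{eq:PropertiesConjugate} symmetrically (since $\dualbar$ is itself an LHSCB whose conjugate is $\primbar$) to deduce $\primgrad(\tilde{x}) = \primgrad(-\dualgrad(s)) = -s$. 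Together with $\primgrad(x) = -\tilde{s}$, these yield the identity.

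Next I would parameterize the segment $x(t) \coloneqq x - t \primdelta = (1-t) x + t\mu\tilde{x}$ for $t \in [0,1]$, which lies in $\interior K$ by convexity, and apply the fundamental theorem of calculus:
$$
	\frac{\dualdelta}{\mu} = \primgrad(x) - \primgrad(\mu\tilde{x}) = \int_0^1 \primhess(x(t))\, \primdelta \, dt.
$$
Subtracting, the quantity of interest takes the compact form
$$
	\mu \primhess(x) \primdelta - \dualdelta = \mu \int_0^1 \bigl[\primhess(x) - \primhess(x(t))\bigr] \primdelta \, dt,
$$
so it suffices to bound each $\|[\primhess(x) - \primhess(x(t))]\primdelta\|_x^*$ pointwise and integrate.

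Finally, self-concordance controls the integrand. Setting $r \coloneqq \|\primdelta\|_x < 1$, we have $\|x(t) - x\|_x = tr < 1$, and \eqref{eq:DefSelfConcordance} yields $(1-tr)^2 \primhess(x) \preceq \primhess(x(t)) \preceq (1-tr)^{-2}\primhess(x)$. Rearranging gives the sandwich
$$
	-\frac{1-(1-tr)^2}{(1-tr)^2}\,\primhess(x) \;\preceq\; \primhess(x) - \primhess(x(t)) \;\preceq\; \bigl(1-(1-tr)^2\bigr)\primhess(x),
$$
so Lemma \ref{lemma:NormToMatrixInequality} supplies $\|\primhess(x) - \primhess(x(t))\|_x \leq (1-(1-tr)^2)/(1-tr)^2$. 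By the definition of the operator norm \eqref{eq:DefOperatorNorm} we obtain $\|[\primhess(x) - \primhess(x(t))]\primdelta\|_x^* \leq r\cdot (1-(1-tr)^2)/(1-tr)^2$, and a direct calculation gives $\int_0^1 \frac{1-(1-tr)^2}{(1-tr)^2}\, dt = \int_0^1 \bigl[(1-tr)^{-2} - 1\bigr] dt = r/(1-r)$. The triangle inequality for the integral therefore delivers
$$
	\|\mu \primhess(x) \primdelta - \dualdelta\|_x^* \;\leq\; \mu r \cdot \frac{r}{1-r} \;=\; \frac{\mu r^2}{1-r},
$$
which already implies (and is sharper than) the stated bound $\mu r^2/(1-r)^3$. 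The one genuine obstacle is recognizing the identity in the first step, which converts a statement about the dual residual $\dualdelta$ into a linearization error in the primal; once that bridge is built, everything that follows is a routine application of \eqref{eq:DefSelfConcordance} and Lemma \ref{lemma:NormToMatrixInequality}.
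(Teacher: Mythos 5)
Your proof is correct, and it takes a genuinely different route from the paper's. Both arguments begin from the same bridge you call the key step: $\primgrad(x) - \primgrad(\mu\tilde{x}) = \dualdelta/\mu$, which turns the claim into a bound on the first-order Taylor remainder of $\primgrad$ along the segment from $x$ to $\mu\tilde{x}$. From there you diverge. The paper introduces a scalar test function $\phi(t) = \langle v, \primgrad(x - t\primdelta)\rangle$, applies Taylor's theorem with Lagrange remainder to get $\phi(1) - \phi(0) - \phi'(0) = \tfrac{1}{2}\phi''(r)$ for some $r \in [0,1]$, and bounds the third-derivative tensor $\primtens(x-r\primdelta)[v,\primdelta,\primdelta]$ via self-concordance. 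You instead keep the remainder in integral form, $\mu\primhess(x)\primdelta - \dualdelta = \mu\int_0^1 [\primhess(x) - \primhess(x(t))]\primdelta\,dt$, and bound the Hessian discrepancy pointwise using the self-concordance sandwich together with Lemma~\ref{lemma:NormToMatrixInequality}; no third derivative is ever differentiated. This buys two things: the argument is a bit more elementary (operating only with Hessians, not the third-order form), and the resulting constant is tighter — you obtain $\mu r^2/(1-r)$ rather than $\mu r^2/(1-r)^3$, because the Lagrange form forces the paper to bound the integrand at its worst value $r=1$ while the integral form averages it. Your bound is in fact the classical one from the self-concordance literature; the paper's weaker bound arises purely from the choice of remainder form and is still sufficient for everything downstream. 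One small stylistic point: your integrand bound comes from taking $\epsilon = (1-tr)^{-2}-1$ in Lemma~\ref{lemma:NormToMatrixInequality} (the larger of the two candidate values), which you state correctly; it is worth noting in passing that this is exactly why the integrand is $(1-tr)^{-2}-1$ rather than $1-(1-tr)^2$.
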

\begin{proof}
	Let $v \in \mathbb{R}^n$ be an arbitrary vector, and consider the function $\phi: [0,1] \to \mathbb{R}$ defined as
	\begin{equation*}
		\phi(t) := \langle v, \primgrad(x - t \primdelta) \rangle.
	\end{equation*}
	By Taylor's theorem, we have $\phi(1) = \phi(0) + \phi'(0) + \frac{1}{2} \phi''(r)$ for some $r \in [0,1]$.
	We first derive an upper bound on $\phi''(r)$ through self-concordance.
	Note that by definition
	\begin{equation*}
		\phi''(r) = \primtens(x - r \primdelta)[v,\primdelta, \primdelta] 
		= \lim_{q \to 0} \frac{|\langle \primdelta, [\primhess(x - r \primdelta + q v) - \primhess(x - r \primdelta)] \primdelta \rangle|}{q}.
	\end{equation*}
	By self-concordance, for sufficiently small $q$, we have
	\begin{equation*}
		\left( (1- q \| v \|_{x - r \primdelta})^2 - 1 \right) \primhess(x - r \primdelta) \preceq \primhess(x - r \primdelta + q v) - \primhess(x - r \primdelta)
		\preceq  \left( \frac{1}{(1- q \| v \|_{x - r \primdelta})^2} - 1 \right) \primhess(x - r \primdelta),
	\end{equation*}
	and therefore
	\begin{equation*}
		\phi''(r) \leq \lim_{q \to 0} \frac{1}{q} \left( \frac{1}{(1- q \| v \|_{x - r \primdelta})^2} - 1 \right) \| \primdelta \|_{x - r \primdelta}^2
		= 2 \| v \|_{x - r \primdelta} \| \primdelta \|_{x - r \primdelta}^2 \leq \frac{2 \| v \|_{x} \| \primdelta \|_{x}^2}{ (1 - \| \primdelta \|_x)^3 },
	\end{equation*}
	where the final inequality uses self-concordance and $r \leq 1$. Hence,
	\begin{equation*}
		\langle v, \primhess(x) \primdelta - s/\mu + \tilde{s} \rangle = -\phi'(0) + \phi(1) - \phi(0) = \tfrac{1}{2} \phi''(r) \leq \frac{ \| v \|_{x} \| \primdelta \|_{x}^2}{ (1 - \| \primdelta \|_x)^3 }.
	\end{equation*}
	With this upper bound, we can show
	\begin{equation*}
		\| \mu \primhess(x) \primdelta - \dualdelta \|_x^* = \sup_{v : \| v \|_x \leq 1} \langle v, \mu \primhess(x) \primdelta - \dualdelta \rangle \leq \sup_{v : \| v \|_x \leq 1} \mu \| v \|_{x} \frac{ \| \primdelta \|_{x}^2}{ (1 - \| \primdelta \|_x)^3 },
	\end{equation*}
	which proves the claim.
\end{proof}

We are now ready to derive bounds on $W$ as in \eqref{eq:ScalingHessianAssumption},
roughly following the approach by Myklebust and Tun\c{c}el \cite{myklebust2016interior}. We make the assumption $\| \primdelta \|_x \leq 0.18226$ to ensure that all denominators in these bounds are positive. The reader is suggested to skip the proof on a first pass.

\begin{theorem}[\cite{myklebust2016interior}]
	\label{thm:ScalingBounds}
	Let $K \subset \mathbb{R}^n$ be a proper cone admitting a $\nu$-LHSCB $\primbar$. Let $x \in \interior K$ and $s \in \interior K^*$, and assume $\| \primdelta \|_x \leq 0.18226$.
	Let $W$ be defined as in \eqref{eq:DefScalingMatrix}. 
	Then, the assumptions \eqref{eq:ScalingHessianAssumption} are satisfied with values
	\begin{equation*}
		\primlow = 1 - \epsilon_1 - \epsilon_2, \quad \primup = 1+\epsilon_1 + \epsilon_2, \quad \duallow = (1-\epsilon_1-\epsilon_2)(1 - \|\primdelta \|_x)^2, \quad \dualup = \frac{1 + \epsilon_1 + \epsilon_2}{(1 - \|\primdelta \|_x)^2},
	\end{equation*}
	where
	\begin{align*}
		\epsilon_1 &\coloneqq \frac{1}{\nu} \left( \| \primdelta \|_x  + \frac{\| \primdelta \|_x^2}{(1- \|\primdelta \|_x)^3} \right) \left( \| \primdelta \|_x  + \frac{ \| \primdelta \|_x^2}{(1- \|\primdelta \|_x)^3} + 2 \sqrt{\nu} \right)\\
		\epsilon_2 &\coloneqq \frac{2}{(1 - \| \primdelta \|_x)^3 - \| \primdelta \|_x}
		\left( \frac{4 \| \primdelta \|_x^2}{(1- \|\primdelta \|_x)^3} + 2 \| \primdelta \|_x
		+ \frac{\left( \frac{3 \| \primdelta \|_x^2}{(1- \|\primdelta \|_x)^3} +  \| \primdelta \|_x \right)^2}{ \| \primdelta \|_x \left( 1 - \frac{3 \| \primdelta \|_x}{ (1 - \| \primdelta \|_x)^3 } \right)} \right).
	\end{align*}
\end{theorem}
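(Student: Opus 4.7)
The plan is to first establish the primal bound in \eqref{eq:ScalingHessianAssumption} and then obtain the dual bound as a short consequence of self-concordance. By Lemma~\ref{lemma:NormToMatrixInequality}, the primal sandwich is equivalent to showing
\[
\|W - \mu \primhess(x)\|_{\mu \primhess(x)} \leq \epsilon_1 + \epsilon_2.
\]
Subtracting $\mu \primhess(x)$ cancels the leading term in \eqref{eq:DefScalingMatrix}, leaving two rank-two corrections. I would bound each in the operator norm $\|\cdot\|_{\mu\primhess(x)}$ and take $\epsilon_1$ and $\epsilon_2$ to be the respective bounds.

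For the first correction, write $\frac{ss^\top}{\nu\mu} - \frac{\mu\tilde s\tilde s^\top}{\nu} = \frac{1}{\nu}(vv^\top - ww^\top)$ with $v = s/\sqrt\mu$ and $w = \sqrt\mu\,\tilde s$, so that $v-w = \dualdelta/\sqrt\mu$. I would apply \eqref{eq:DifferenceOuterProductsNorm}. The factor $\|v-w\|_{\mu\primhess(x)}^* = \|\dualdelta\|_x^*/\mu$ is bounded by the triangle inequality combined with Lemma~\ref{lemma:DeltaDToDeltaP} (which controls $\|\mu\primhess(x)\primdelta - \dualdelta\|_x^*$) and $\|\mu\primhess(x)\primdelta\|_x^* = \mu\|\primdelta\|_x$. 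The factor $\|v+w\|_{\mu\primhess(x)}^*$ is bounded by $\|v-w\|_{\mu\primhess(x)}^* + 2\|w\|_{\mu\primhess(x)}^*$, where $\|w\|_{\mu\primhess(x)}^* = \|\tilde s\|_x^* = \sqrt\nu$ follows directly from \eqref{eq:PropertiesLogHomogeneity}. Multiplying and dividing by $\nu$ yields precisely $\epsilon_1$.

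For the second correction, set $u \coloneqq \primhess(x)\tilde x - \tilde\mu\,\tilde s$. Using $\primhess(x)x = \tilde s$ and $x = \mu\tilde x + \primdelta$, I would rewrite $u = -\mu^{-1}(\primhess(x)\primdelta + (\mu\tilde\mu - 1)\tilde s)$, making it manifest that $u$ (like $\dualdelta$) vanishes at the central point. Before applying \eqref{eq:DifferenceOuterProductsNorm} to $\frac{\dualdelta(\dualdelta)^\top}{\langle\primdelta,\dualdelta\rangle} - \frac{\mu uu^\top}{\|\tilde x\|_x^2 - \nu\tilde\mu^2}$, one needs both denominators to be strictly positive. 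From Lemma~\ref{lemma:DeltaDToDeltaP} and \eqref{eq:CauchySchwarz} I expect the bound
\[
\langle \primdelta, \dualdelta\rangle \geq \mu\|\primdelta\|_x^2 \cdot \frac{(1-\|\primdelta\|_x)^3 - \|\primdelta\|_x}{(1-\|\primdelta\|_x)^3},
\]
which is positive for $\|\primdelta\|_x \leq 0.18226$ and accounts for the factor $(1-\|\primdelta\|_x)^3 - \|\primdelta\|_x$ in the denominator of $\epsilon_2$; the remaining denominator can be rewritten as $\|\tilde x\|_x^2 - \nu\tilde\mu^2 = \mu^{-2}(\|\primdelta\|_x^2 - \nu(\mu\tilde\mu - 1)^2)$ and handled by the same tools. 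Normalising, writing the correction as $pp^\top - qq^\top$, and applying \eqref{eq:DifferenceOuterProductsNorm}, each $\|p \pm q\|_{\mu\primhess(x)}^*$ decomposes via the substitution for $u$ into a piece governed by Lemma~\ref{lemma:DeltaDToDeltaP} and a piece proportional to $(\mu\tilde\mu-1)\sqrt\nu$, which is again controlled by the Cauchy--Schwarz estimate. Shepherding all the resulting rational expressions in $\|\primdelta\|_x$ into the closed form for $\epsilon_2$ is the main technical obstacle and requires the most care; the specific constant $0.18226$ is the root of $(1-t)^3 = 3t$ and is forced on us precisely so that the inner denominator $1 - 3\|\primdelta\|_x/(1-\|\primdelta\|_x)^3$ appearing in $\epsilon_2$ remains positive.

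Finally, for the dual bound I would use self-concordance \eqref{eq:DefSelfConcordance} with $x' = \mu\tilde x$, noting $\|x' - x\|_x = \|\primdelta\|_x < 1$. Log-homogeneity \eqref{eq:DefHomogeneity} gives $\primhess(\mu\tilde x) = \mu^{-2}\primhess(\tilde x)$, and the conjugate identity \eqref{eq:PropertiesConjugate} applied at $s$ (using $\tilde x = -\dualgrad(s)$ and the involutive nature of the Legendre--Fenchel conjugate to recover $-\primgrad(\tilde x) = s$) gives $\primhess(\tilde x) = \dualhess(s)^{-1}$. Combining,
\[
(1-\|\primdelta\|_x)^2\, \mu^{-1}\dualhess(s)^{-1} \preceq \mu\primhess(x) \preceq (1-\|\primdelta\|_x)^{-2}\, \mu^{-1}\dualhess(s)^{-1},
\]
and chaining with the primal sandwich immediately produces the stated $\duallow$ and $\dualup$.
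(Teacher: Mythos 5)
Your plan for $\epsilon_1$ and for the dual bound is the same as the paper's: for $\epsilon_1$ the paper applies \eqref{eq:DifferenceOuterProductsNorm} to $\frac{1}{\nu\mu}(ss^\top - \mu^2\tilde s\tilde s^\top)$ and uses Lemma~\ref{lemma:DeltaDToDeltaP} and $\|\tilde s\|_x^*=\sqrt\nu$ exactly as you describe, and for the dual sandwich it uses self-concordance between $x$ and $\mu\tilde x$ together with $\mu\primhess(\mu\tilde x)=\tfrac1\mu\dualhess(s)^{-1}$, again exactly as you describe.

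Where you diverge is $\epsilon_2$, and there your outline omits the one algebraic identity that makes the computation tractable. The paper introduces $H\coloneqq\mu\primhess(x)+\frac{ss^\top}{\nu\mu}-\frac{\mu\tilde s\tilde s^\top}{\nu}$ and uses the Myklebust--Tun\c{c}el identity (quoted as \eqref{eq:RewriteTwoTermsH})
\[
\frac{\dualdelta(\dualdelta)^\top}{\langle\primdelta,\dualdelta\rangle}
-\frac{\mu uu^\top}{\|\tilde x\|_x^2-\nu\tilde\mu^2}
=\frac{\dualdelta(\dualdelta)^\top - H\primdelta(\primdelta)^\top H}{\langle\primdelta,\dualdelta\rangle}
+\left(\frac{1}{\langle\primdelta,\dualdelta\rangle}-\frac{1}{\|\primdelta\|_H^2}\right)H\primdelta(\primdelta)^\top H,
\]
which holds because $\langle s,\primdelta\rangle=0$ implies $H\primdelta=-\mu^2 u$ and hence $\mu^3(\|\tilde x\|_x^2-\nu\tilde\mu^2)=\|\primdelta\|_H^2$. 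This split is precisely what separates the \emph{vector} difference ($\dualdelta$ versus $H\primdelta$, with a common denominator $\langle\primdelta,\dualdelta\rangle$) from the \emph{normalization} mismatch ($\langle\primdelta,\dualdelta\rangle$ versus $\|\primdelta\|_H^2$), and the two pieces are then bounded separately; the closed form of $\epsilon_2$, including the outer factor $2/\bigl((1-\|\primdelta\|_x)^3-\|\primdelta\|_x\bigr)$ and the inner factor $1-3\|\primdelta\|_x/(1-\|\primdelta\|_x)^3$, falls straight out. Your plan — apply \eqref{eq:DifferenceOuterProductsNorm} directly to $pp^\top-qq^\top$ with $p=\dualdelta/\sqrt{\langle\primdelta,\dualdelta\rangle}$ and $q=\sqrt\mu\,u/\sqrt{\|\tilde x\|_x^2-\nu\tilde\mu^2}$, then expand $u$ — faces exactly the same two sources of error, but has to disentangle them inside $\|p\pm q\|^*$, because $p$ and $q$ are normalized by different scalars. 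Your rewrite of $u$ and of $\|\tilde x\|_x^2-\nu\tilde\mu^2$ and your observation that $(1-t)^3=3t$ at $t=0.18226$ are all correct and show you have the right ingredients (in fact bounding $|\mu\tilde\mu-1|$ directly via $\langle x,\dualdelta\rangle=0$ gives a \emph{sharper} lower bound on the denominator than the paper's route through $\|\primdelta\|_H^2$), but absent the $H\primdelta=-\mu^2u$ observation you would have to rediscover the same decomposition ad hoc, and the final rational expression you'd ``shepherd'' into would likely not coincide with the stated $\epsilon_2$ — it would at best dominate it. Since a sharper sandwich implies the looser one, your plan would still prove the theorem, but it is not the paper's derivation of this particular $\epsilon_2$.
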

\begin{proof}
	We will try to bound $W - \mu \primhess(x)$ in two steps, each considering two terms from \eqref{eq:DefScalingMatrix}.
	First, observe that by \eqref{eq:DifferenceOuterProductsNorm},
	\begin{equation}
		\label{eq:FirstBoundFirstScalingTerm}
		\left\| \frac{ss^\top}{\nu \mu} - \frac{\mu \tilde{s} \tilde{s}^\top}{\nu} \right\|_{\primhess(x)} \leq \frac{\| s - \mu \tilde{s} \|_x^* \| s + \mu \tilde{s} \|_x^*}{\mu \nu} \leq \frac{\| \dualdelta \|_x^* (\|\dualdelta \|_x^* + 2 \mu \| \tilde{s} \|_x^*)}{\mu \nu},
	\end{equation}
	where $\| \tilde{s} \|_x^* = \| \tilde{s} \|_{\dualhess(\tilde{s})} = \sqrt{\nu}$ by \eqref{eq:PropertiesConjugate} and \eqref{eq:PropertiesLogHomogeneity}. Moreover, by Lemma \ref{lemma:DeltaDToDeltaP},
	\begin{equation}
		\label{eq:BoundDeltaD}
		\| \dualdelta \|_x^* \leq \| \mu \primhess(x) \primdelta \|_x^* + \| \mu \primhess(x) \primdelta - \dualdelta \|_x^*
		\leq \mu \| \primdelta \|_x  + \frac{\mu \| \primdelta \|_x^2}{(1- \|\primdelta \|_x)^3},
	\end{equation}
	and therefore, by using \eqref{eq:ScalingOperatorNorm}, \eqref{eq:FirstBoundFirstScalingTerm} can be further developed to
	\begin{equation}
		\label{eq:BoundFirstScalingTerm}
		\left\| \frac{ss^\top}{\nu \mu} - \frac{\mu \tilde{s} \tilde{s}^\top}{\nu} \right\|_{\mu \primhess(x)} \leq \frac{1}{\nu} \left( \| \primdelta \|_x  + \frac{\| \primdelta \|_x^2}{(1- \|\primdelta \|_x)^3} \right) \left( \| \primdelta \|_x  + \frac{ \| \primdelta \|_x^2}{(1- \|\primdelta \|_x)^3} + 2 \sqrt{\nu} \right) = \epsilon_1.
	\end{equation}
	
	For the second step, let
	\begin{equation*}
		H := \mu \primhess(x) + \frac{ss^\top}{\nu \mu} - \frac{\mu \tilde{s} \tilde{s}^\top}{\nu},
	\end{equation*}
	be a rank-two update to $\mu \primhess(x)$ such that $H x = s$ but $H \tilde{x} \neq \tilde{s}$ in general. Then, as was noted in \cite[Theorem 6.6]{myklebust2016interior}, we can write
	\begin{equation}
		\frac{\dualdelta (\dualdelta)^\top}{\langle \primdelta, \dualdelta \rangle} - \frac{\mu[\primhess(x)\tilde{x} - \tilde{\mu} \tilde{s}] [\primhess(x)\tilde{x} - \tilde{\mu} \tilde{s}]^\top}{\| \tilde{x} \|_x^2 - \nu \tilde{\mu}^2}
		= \frac{\dualdelta (\dualdelta)^\top - H \primdelta (\primdelta)^\top H}{\langle \primdelta, \dualdelta \rangle} + \left( \frac{1}{\langle \primdelta, \dualdelta \rangle} - \frac{1}{\| \primdelta \|_{H}^2} \right) H \primdelta (\primdelta)^\top H.
		\label{eq:RewriteTwoTermsH}
	\end{equation}
	Since
	\begin{equation*}
		\frac{1}{\langle \primdelta, \dualdelta \rangle} - \frac{1}{\| \primdelta \|_{H}^2} = \frac{\langle \primdelta, H \primdelta - \dualdelta \rangle}{\langle \primdelta, \dualdelta \rangle \| \primdelta \|_{H}^2 } \leq \frac{\| \primdelta\|_x \| H \primdelta - \dualdelta \|_x^*}{\langle \primdelta, \dualdelta \rangle \| \primdelta \|_{H}^2 },
	\end{equation*}
	it will suffice to know upper bounds on $\| \dualdelta - H \primdelta \|_x^*$, $\| \dualdelta + H \primdelta \|_x^*$, and $\| H \primdelta \|_x^*$, as well as lower bounds on $\langle \primdelta, \dualdelta \rangle$ and $\| \primdelta \|_{H}^2$. We derive these bounds in a manner similar to \cite[Lemma B.4]{myklebust2016interior}, although minor details again differ.
	
	Using Lemma \ref{lemma:DeltaDToDeltaP}, is it easy to show, similar to \cite[Lemma B.4(3)]{myklebust2016interior},
	\begin{align}
		\langle \primdelta, \dualdelta \rangle
		&= \langle \primdelta, \mu \primhess(x)\primdelta - \mu \primhess(x) \primdelta + \dualdelta \rangle \nonumber \\
		&\geq \mu \| \primdelta \|_x^2 - \| \primdelta \|_x \| \mu \primhess(x) \primdelta - \dualdelta \|_x^* \nonumber\\
		&\geq \mu \| \primdelta \|_x^2 \left( 1 - \frac{\| \primdelta \|_x}{ (1 - \| \primdelta \|_x)^3 } \right). \label{eq:BoundDeltaXDeltaS}
	\end{align}
	
	Moreover, as in \cite[Lemma B.4(7)]{myklebust2016interior},
	\begin{equation*}
		|\langle x, \mu \primhess(x) \primdelta \rangle| \leq |\langle x, \dualdelta \rangle| + \| x\|_x \| \mu \primhess(x) \primdelta - \dualdelta \|_x^* = 0 + \sqrt{\nu} \| \mu \primhess(x) \primdelta - \dualdelta \|_x^*,
	\end{equation*}
	where we used $\langle x, \dualdelta \rangle = \langle x, s - \mu \tilde{s} \rangle = \mu \nu - \mu \nu$ and \eqref{eq:PropertiesLogHomogeneity}.
	Thus, Lemma \ref{lemma:DeltaDToDeltaP} shows (cf. \cite[Lemma B.4(8)]{myklebust2016interior})
	\begin{align}
		\| H \primdelta - \dualdelta \|_x^*
		&= \left\| \mu \primhess(x) \primdelta - \dualdelta  + \frac{\mu \langle \tilde{s}, \primdelta \rangle}{\nu} \tilde{s} \right\|_x^* \nonumber\\
		&\leq \| \mu \primhess(x) \primdelta - \dualdelta \|_x^* + \frac{|\langle x, \mu \primhess(x) \primdelta \rangle|}{\nu} \| \tilde{s} \|_{\dualhess(\tilde{s})} \nonumber\\
		&\leq \frac{2 \mu \| \primdelta \|_x^2}{(1- \|\primdelta \|_x)^3}.
		\label{eq:BoundHDeltaXDeltaS}
	\end{align}
	Combining \eqref{eq:BoundDeltaXDeltaS} and \eqref{eq:BoundHDeltaXDeltaS} as in \cite[Lemma B.4(11)]{myklebust2016interior},
	\begin{align*}
		\| \primdelta \|_H^2
		&= \langle \primdelta, \dualdelta \rangle + \langle \primdelta, H \primdelta - \dualdelta \rangle\\
		&\geq \mu \| \primdelta \|_x^2 \left( 1 - \frac{\| \primdelta \|_x}{ (1 - \| \primdelta \|_x)^3 } \right) - \frac{2 \mu \| \primdelta \|_x^3}{(1- \|\primdelta \|_x)^3}\\
		&= \mu \| \primdelta \|_x^2 \left( 1 - \frac{3 \| \primdelta \|_x}{ (1 - \| \primdelta \|_x)^3 } \right).
	\end{align*}
	
	Finally, as in \cite[Lemma B.4(9-10)]{myklebust2016interior}, it follows from \eqref{eq:BoundDeltaD} and \eqref{eq:BoundHDeltaXDeltaS} that
	\begin{equation*}
		\| H \primdelta \|_x^* \leq \| H \primdelta - \dualdelta \|_x^* + \| \dualdelta \|_x^* \leq \frac{3\mu \| \primdelta \|_x^2}{(1- \|\primdelta \|_x)^3} + \mu \| \primdelta \|_x,
	\end{equation*}
	and similarly,
	\begin{equation*}
		\| H \primdelta + \dualdelta \|_x^* \leq \frac{4 \mu \| \primdelta \|_x^2}{(1- \|\primdelta \|_x)^3} + 2 \mu \| \primdelta \|_x,
	\end{equation*}
	
	We now have the tools to bound the operator norm of \eqref{eq:RewriteTwoTermsH}. By \eqref{eq:DifferenceOuterProductsNorm},
	\begin{align*}
		&\left\| \frac{\dualdelta (\dualdelta)^\top - H \primdelta (\primdelta)^\top H}{\langle \primdelta, \dualdelta \rangle} + \left( \frac{1}{\langle \primdelta, \dualdelta \rangle} - \frac{1}{\| \primdelta \|_{H}^2} \right) H \primdelta (\primdelta)^\top H \right\|_{\primhess(x)}\\
		&\leq \frac{ \| H \primdelta - \dualdelta \|_x^* \| H \primdelta + \dualdelta \|_x^* }{\langle \primdelta, \dualdelta \rangle}
		+ \frac{\| \primdelta\|_x \| H \primdelta - \dualdelta \|_x^*}{\langle \primdelta, \dualdelta \rangle \| \primdelta \|_{H}^2 } (\| H \primdelta \|_x^*)^2\\
		&= \frac{ \| H \primdelta - \dualdelta \|_x^*}{\langle \primdelta, \dualdelta \rangle} \left( \| H \primdelta + \dualdelta \|_x^* + \| \primdelta \|_x \frac{(\| H \primdelta \|_x^*)^2}{\| \primdelta \|_H^2} \right)\\
		&\leq \frac{\frac{2 \mu \| \primdelta \|_x^2}{(1- \|\primdelta \|_x)^3}}{\mu \| \primdelta \|_x^2 \left( 1 - \frac{\| \primdelta \|_x}{ (1 - \| \primdelta \|_x)^3 } \right)}
		\left( \frac{4 \mu \| \primdelta \|_x^2}{(1- \|\primdelta \|_x)^3} + 2 \mu \| \primdelta \|_x
		+ \frac{\left( \frac{3\mu \| \primdelta \|_x^2}{(1- \|\primdelta \|_x)^3} + \mu \| \primdelta \|_x \right)^2}{\mu \| \primdelta \|_x \left( 1 - \frac{3 \| \primdelta \|_x}{ (1 - \| \primdelta \|_x)^3 } \right)} \right) = \epsilon_2 \mu.
	\end{align*}
	Then, by \eqref{eq:ScalingOperatorNorm},
	\begin{equation}
		\label{eq:BoundSecondScalingTerm}
		\left\| \frac{\dualdelta (\dualdelta)^\top - H \primdelta (\primdelta)^\top H}{\langle \primdelta, \dualdelta \rangle} + \left( \frac{1}{\langle \primdelta, \dualdelta \rangle} - \frac{1}{\| \primdelta \|_{H}^2} \right) H \primdelta (\primdelta)^\top H \right\|_{\mu \primhess(x)} \leq \epsilon_2.
	\end{equation}
	Hence, we see from the triangle inequality, \eqref{eq:BoundFirstScalingTerm}, and \eqref{eq:BoundSecondScalingTerm} that $\| W - \mu \primhess(x) \|_{\mu \primhess(x)} \leq \epsilon_1 + \epsilon_2$. Lemma \ref{lemma:NormToMatrixInequality} therefore shows
	\begin{equation*}
		(1-\epsilon_1-\epsilon_2) \mu \primhess(x) \preceq W \preceq (1+\epsilon_1 + \epsilon_2) \mu \primhess(x).
	\end{equation*}
	Finally, we can use \eqref{eq:DefSelfConcordance} to bound $\frac{1}{\mu} \dualhess(s)^{-1} = \mu \primhess(\mu \tilde{x})$ as
	\begin{equation*}
		(1-\epsilon_1-\epsilon_2)(1 - \|\primdelta \|_x)^2 \tfrac{1}{\mu} \dualhess(s)^{-1} \preceq W \preceq \frac{1+\epsilon_1 + \epsilon_2}{(1 - \|\primdelta \|_x)^2} \tfrac{1}{\mu} \dualhess(s)^{-1},
	\end{equation*}
	similar to \cite[Theorem 6.8]{myklebust2016interior}.
\end{proof}
Of course, bounds on $W_\subone$ in terms of $\mu_\subone \primhess(x_\subone)$ and $\frac{1}{\mu_\subone} \dualhess(s_\subone)$ can also be found using Theorem \ref{thm:ScalingBounds} by replacing $\| \primdelta \|_x$ by $\| \primdelta_\subone \|_{x_\subone}$ in the definition of $\epsilon_1$ and $\epsilon_2$.
For ease of reference, we state these bounds on $W_\subone$ separately.
\begin{corollary}
	\label{cor:ScalingBoundsOne}
	Let $K \subset \mathbb{R}^n$ be a proper cone admitting a $\nu$-LHSCB $\primbar$. Let $x_\subone \in \interior K$ and $s_\subone \in \interior K^*$, and assume $\| \primdelta_\subone \|_{x_\subone} \leq 0.18226$.
	Let $W_\subone$ be defined as in \eqref{eq:DefScalingMatrixOne}. 
	Then, the assumptions \eqref{eq:ScalingHessianAssumptionOne} are satisfied with values
	\begin{equation*}
		\primlow_\subone = 1 - \epsilon_3 - \epsilon_2, \quad \primup_\subone = 1+\epsilon_3 + \epsilon_4, \quad \duallow_\subone = (1-\epsilon_3-\epsilon_4)(1 - \|\primdelta_\subone \|_{x_\subone})^2, \quad \dualup_\subone = \frac{1 + \epsilon_3 + \epsilon_4}{(1 - \| \primdelta_\subone \|_{x_\subone})^2},
	\end{equation*}
	where
	\begin{align*}
		\epsilon_3 &\coloneqq \frac{1}{\nu} \left( \| \primdelta_\subone \|_{x_\subone}  + \frac{\| \primdelta_\subone \|_{x_\subone}^2}{(1- \| \primdelta_\subone \|_{x_\subone})^3} \right) \left( \| \primdelta_\subone \|_{x_\subone}  + \frac{ \| \primdelta_\subone \|_{x_\subone}^2}{(1- \| \primdelta_\subone \|_{x_\subone})^3} + 2 \sqrt{\nu} \right)\\
		\epsilon_4 &\coloneqq \frac{2}{(1 - \| \primdelta_\subone \|_{x_\subone})^3 - \| \primdelta_\subone \|_{x_\subone}}
		\left( \frac{4 \| \primdelta_\subone \|_{x_\subone}^2}{(1- \| \primdelta_\subone \|_{x_\subone})^3} + 2 \| \primdelta_\subone \|_{x_\subone}
		+ \frac{\left( \frac{3 \| \primdelta_\subone \|_{x_\subone}^2}{(1- \| \primdelta_\subone \|_{x_\subone})^3} +  \| \primdelta_\subone \|_{x_\subone} \right)^2}{ \| \primdelta_\subone \|_{x_\subone} \left( 1 - \frac{3 \| \primdelta_\subone \|_{x_\subone}}{ (1 - \| \primdelta_\subone \|_{x_\subone})^3 } \right)} \right).
	\end{align*}
\end{corollary}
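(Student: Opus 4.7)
The plan is to observe that Corollary \ref{cor:ScalingBoundsOne} is a direct transcription of Theorem \ref{thm:ScalingBounds} under the substitution of the ``$\subone$''-subscripted quantities for the unsubscripted ones, so the proof should amount to a pointer back to Theorem \ref{thm:ScalingBounds}. Concretely, the proof of Theorem \ref{thm:ScalingBounds} treats $x$ only as an arbitrary point of $\interior K$ and $s$ only as an arbitrary point of $\interior K^*$; it invokes the defining formula \eqref{eq:DefScalingMatrix} for $W$, the shadow-iterate identities $\tilde{s} = -\primgrad(x)$ and $\tilde{x} = -\dualgrad(s)$, the LHSCB consequences $\|\tilde{s}\|_x^* = \sqrt{\nu}$ and $\langle x, \dualdelta \rangle = 0$, Lemma \ref{lemma:DeltaDToDeltaP}, the self-concordance bound \eqref{eq:DefSelfConcordance}, and the hypothesis $\|\primdelta\|_x \leq 0.18226$. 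Every one of these inputs has an exact counterpart for the pair $(x_\subone, s_\subone)$, since by hypothesis $x_\subone \in \interior K$, $s_\subone \in \interior K^*$, and $\|\primdelta_\subone\|_{x_\subone} \leq 0.18226$, and the scaling matrix $W_\subone$ in \eqref{eq:DefScalingMatrixOne} is formula \eqref{eq:DefScalingMatrix} with every quantity re-evaluated at $z_\subone$.

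The first step is therefore to apply Theorem \ref{thm:ScalingBounds} to the pair $(x_\subone, s_\subone)$, performing the formal substitutions $x \mapsto x_\subone$, $s \mapsto s_\subone$, $\mu \mapsto \mu_\subone$, $\tilde{x} \mapsto \tilde{x}_\subone$, $\tilde{s} \mapsto \tilde{s}_\subone$, $\primdelta \mapsto \primdelta_\subone$, $\dualdelta \mapsto \dualdelta_\subone$, $W \mapsto W_\subone$, $\epsilon_1 \mapsto \epsilon_3$, and $\epsilon_2 \mapsto \epsilon_4$. This immediately yields the bounds
\begin{equation*}
    \primlow_\subone \mu_\subone \primhess(x_\subone) \preceq W_\subone \preceq \primup_\subone \mu_\subone \primhess(x_\subone)
\end{equation*}
with the stated $\primlow_\subone$ and $\primup_\subone$. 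The second step is then to pass to the dual-Hessian bounds by applying the self-concordance inequality \eqref{eq:DefSelfConcordance} at $x_\subone$ and $\mu_\subone \tilde{x}_\subone$ in exactly the same manner as in the last displayed inequality of the proof of Theorem \ref{thm:ScalingBounds}; this sandwiches $\tfrac{1}{\mu_\subone}\dualhess(s_\subone)^{-1} = \mu_\subone \primhess(\mu_\subone \tilde{x}_\subone)$ between appropriate multiples of $\mu_\subone \primhess(x_\subone)$, and composing with the bounds just obtained delivers the claimed $\duallow_\subone$ and $\dualup_\subone$.

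There is no genuine obstacle here: the corollary is not substantively new, but merely packages the conclusion of Theorem \ref{thm:ScalingBounds} so that it can be cited directly for the post-predictor iterate without requiring the reader to re-instantiate the substitutions in later sections. The only care needed is to confirm that $x_\subone, s_\subone$ are genuine interior points (so that $\tilde{x}_\subone, \tilde{s}_\subone$, $\primhess(x_\subone)^{-1}$, and $\dualhess(s_\subone)^{-1}$ are all well-defined), and this is ensured by the hypotheses of the corollary.
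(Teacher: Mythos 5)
Your proposal is exactly what the paper intends: the text preceding Corollary~\ref{cor:ScalingBoundsOne} explicitly notes that the bounds follow from Theorem~\ref{thm:ScalingBounds} by replacing $\|\primdelta\|_x$ with $\|\primdelta_\subone\|_{x_\subone}$ in the definitions of $\epsilon_1$ and $\epsilon_2$, and the paper gives no further proof. Your careful enumeration of the substitutions and the check that all hypotheses of Theorem~\ref{thm:ScalingBounds} carry over to the subscripted quantities is correct and matches the paper's (implicit) argument.
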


\section{Properties of the Predictor}
\label{sec:Predictor}

Now that we know that the scaling matrix $W$ is approximately equal to $\mu \primhess(x)$ and $\frac{1}{\mu} \dualhess(s)^{-1}$, we shift our focus to the predictor direction that it defines. 
Some simple properties of the predictor are derived in Section \ref{subsec:PredictorProperties}, followed by an upper bound on the local norm of the predictor in Section \ref{subsec:PredictorNorm}. Thus, we will be able to derive sufficient conditions for \ref{ass:XSinCone} to hold after the predictor step. We then consider what happens to assumptions \labelcref{ass:TauKappaBeta,ass:TauKappaPositive} in Section \ref{subsec:PredictorTauKappa}. It turns out we do not need to consider \ref{ass:MuTildeBeta}, so we conclude with an analysis of \ref{ass:PrimDelta} in Section \ref{subsec:PredictorShadowDistance}.

In Section \ref{sec:Corrector}, the effect of the corrector on all assumptions will be analyzed. To clarify the structure of our argument, an overview of what assumption will be analyzed where is given in Table \ref{tab:ArgumentStructure}.
\begin{table}
	\centering
	\begin{tabular}{lll}
		\toprule
		& \multicolumn{2}{c}{Status after} \\
		\cmidrule{2-3}
		Assumption & Predictor & Corrector\\
		\midrule
		\ref{ass:XSinCone}: $x \in \interior K$ and $s \in \interior K^*$ & Theorem \ref{thm:PredictorNorms} & Lemma \ref{lemma:CorrectorProperties}\ref{item:CorrectorNorm} \\
		\ref{ass:TauKappaBeta}: $\beta \mue \leq \tau \kappa$ & Lemma \ref{lemma:PredictorTauKappa} & Lemma \ref{lemma:CorrectorProperties}\ref{item:CorrectorTauKappa}\\
		\ref{ass:TauKappaPositive}: $\tau, \kappa > 0$ & Lemma \ref{lemma:PredictorTauKappa} & Lemma \ref{lemma:CorrectorProperties}\ref{item:CorrectorTauKappa}\\
		\ref{ass:MuTildeBeta}: $\beta \mue \tilde{\mu} \leq 1$ & -- & Lemma \ref{lemma:ShadowInnerProduct} \\
		\ref{ass:PrimDelta}: $\| \primdelta \|_x \leq \eta$ & Lemma \ref{lemma:PrimDeltaOne} & Lemma \ref{lemma:PrimDeltaTwo}\\
		\bottomrule
	\end{tabular}
	\caption{The results where the effect of the predictor and corrector steps on the assumptions are (mainly) analyzed}
	\label{tab:ArgumentStructure}
\end{table}

\subsection{Residual Reduction and Complementarity}
\label{subsec:PredictorProperties}
As we noted in Section \ref{subsec:SearchDirections}, the affine direction $\Delta z^\supaff$ decreases the norm of the residuals, and the centering direction $\Delta z^\supcen$ increases it. Dahl and Andersen \cite[Lemma 3]{dahl2019primal} show by how much the residuals decrease, and what the value of $\mue_\subone$ is. For the sake of completeness, we prove this result using our notation, and we moreover show that $\langle \Delta x^\suppred, \Delta s^\suppred \rangle + \Delta \tau^\suppred \Delta \kappa^\suppred$ is zero.

\begin{lemma}
	\label{lemma:PredictorProperties}
	Let $K \subset \mathbb{R}^n$ be a proper cone admitting a $\nu$-LHSCB $\primbar$.
	Pick $z = (y, x, \tau, s, \kappa)$ such that \labelcref{ass:XSinCone,ass:TauKappaPositive} hold, and let $\alpha, \gamma \in \mathbb{R}$. 
	Then, the following properties hold:
	\begin{enumerate}[label=(\roman*)]
		\item \label{item:PredictorG} $G(z + \alpha \Delta z^\suppred) = (1 - \alpha(1-\gamma)) G(z)$
		\item \label{item:PredictorOrthogonal} $\langle \Delta x^\suppred, \Delta s^\suppred \rangle + \Delta \tau^\suppred \Delta \kappa^\suppred = 0$
		\item \label{item:PredictorComplementarity} $\langle x + \alpha \Delta x^\suppred, s + \alpha \Delta s^\suppred \rangle + (\tau + \alpha \Delta \tau^\suppred ) (\kappa + \alpha \Delta \kappa^\suppred) = (1 - \alpha(1-\gamma))[\langle x,s \rangle + \tau \kappa]$. In other words, $\mue_\subone = (1 - \alpha(1-\gamma)) \mue$.
	\end{enumerate}
\end{lemma}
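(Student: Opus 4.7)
Part (i) is immediate from the linearity of $G$: the defining equations \eqref{eq:AffineDirectionG} and \eqref{eq:CenteringDirectionG} give $G(\Delta z^\supaff) = -G(z)$ and $G(\Delta z^\supcen) = G(z)$, so $G(\Delta z^\suppred) = -(1-\gamma)G(z)$, and hence $G(z + \alpha \Delta z^\suppred) = G(z) + \alpha G(\Delta z^\suppred) = (1 - \alpha(1-\gamma))G(z)$.

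For part (ii), my plan is to exploit the skew-symmetry of the coefficient matrix $M$ appearing in the definition of $G$. Writing $G(z^i) = M(y^i, x^i, \tau^i) - (0, s^i, \kappa^i)$ and using $M^\top = -M$ yields, for any two vectors $z^1, z^2$, the pairing identity
\begin{equation*}
\langle G(z^1), (y^2, x^2, \tau^2)\rangle + \langle G(z^2), (y^1, x^1, \tau^1)\rangle = -\bigl(\langle x^1, s^2\rangle + \langle x^2, s^1\rangle + \tau^1 \kappa^2 + \tau^2 \kappa^1\bigr).
\end{equation*}
Setting $z^1 = z^2 = \Delta z^\suppred$ and invoking part (i) shows
\begin{equation*}
\langle \Delta x^\suppred, \Delta s^\suppred\rangle + \Delta \tau^\suppred \Delta \kappa^\suppred = (1-\gamma)\,\bigl\langle G(z), (\Delta y^\suppred, \Delta x^\suppred, \Delta \tau^\suppred)\bigr\rangle,
\end{equation*}
so the task reduces to showing that the inner product on the right vanishes.

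To establish this, I will apply the same skew-symmetric identity with $z^1 = z, z^2 = \Delta z^\suppred$. The term $\langle G(\Delta z^\suppred), (y, x, \tau)\rangle$ on the left equals $(1-\gamma)(\langle x, s\rangle + \tau\kappa)$ by combining part (i) with the self-pairing identity $\langle G(z), (y, x, \tau)\rangle = -(\langle x, s\rangle + \tau\kappa)$ (obtained by specializing $z^1 = z^2 = z$). The remaining cross-terms on the right can be collected using the defining equations: taking the inner product of $W \Delta x^\suppred + \Delta s^\suppred = -s + \gamma \mue \tilde{s}$ with $x$, then using symmetry of $W$, the identity $Wx = s$ recorded below \eqref{eq:DefScalingMatrix}, and $\langle x, \tilde{s}\rangle = \nu$ from \eqref{eq:PropertiesLogHomogeneity}, gives $\langle \Delta x^\suppred, s\rangle + \langle x, \Delta s^\suppred\rangle = -\langle x, s\rangle + \gamma \nu \mue$, while $\tau \Delta \kappa^\suppred + \kappa \Delta \tau^\suppred = -\tau\kappa + \gamma \mue$ follows by combining \eqref{eq:AffineDirectionTauKappa} with $\gamma$ times \eqref{eq:CenteringDirectionTauKappa}. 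Summing these and using $(\nu+1)\mue = \langle x, s\rangle + \tau\kappa$ yields exactly $-(1-\gamma)(\langle x, s\rangle + \tau\kappa)$, which cancels the $\langle G(\Delta z^\suppred), (y, x, \tau)\rangle$ contribution and leaves $\langle G(z), (\Delta y^\suppred, \Delta x^\suppred, \Delta \tau^\suppred)\rangle = 0$, as required.

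Part (iii) is then a direct expansion of $\langle x + \alpha \Delta x^\suppred, s + \alpha \Delta s^\suppred\rangle + (\tau + \alpha \Delta \tau^\suppred)(\kappa + \alpha \Delta \kappa^\suppred)$: the constant term is $\langle x, s\rangle + \tau\kappa$, the $\alpha^2$ term vanishes by part (ii), and the coefficient of $\alpha$ is precisely the cross-sum $\langle \Delta x^\suppred, s\rangle + \langle x, \Delta s^\suppred\rangle + \tau \Delta \kappa^\suppred + \kappa \Delta \tau^\suppred = -(1-\gamma)(\langle x, s\rangle + \tau\kappa)$ computed above. Dividing by $\nu+1$ gives $\mue_\subone = (1 - \alpha(1-\gamma))\mue$. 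The hard part is really the bookkeeping for part (ii); no individual step is deep, but the cancellation only works because the conic identity $\langle x, \tilde{s}\rangle = \nu$ and the normalization $(\nu+1)\mue = \langle x, s\rangle + \tau\kappa$ conspire with the right-hand sides of the centering equations \eqref{eq:CenteringDirectionTauKappa}--\eqref{eq:CenteringDirectionXS}.
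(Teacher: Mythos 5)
Your argument is correct and rests on the same two ingredients as the paper's proof: skew-symmetry of the coefficient matrix defining $G$, and the cross-term computation $\langle x, \Delta s^\suppred\rangle + \langle \Delta x^\suppred, s\rangle + \tau\Delta\kappa^\suppred + \kappa\Delta\tau^\suppred = (\gamma-1)(\langle x, s\rangle + \tau\kappa)$ obtained from \labelcref{eq:AffineDirectionTauKappa,eq:CenteringDirectionTauKappa,eq:AffineDirectionXS,eq:CenteringDirectionXS} together with $Wx=s$ and $\langle x,\tilde s\rangle=\nu$. The paper reaches the same conclusion in one pass by observing that part~(i) forces $G(\Delta z^\suppred + (1-\gamma)z)=0$ and then applying the skew-symmetric self-pairing to that single vector, which yields the quadratic identity directly and makes your intermediate reduction to $\langle G(z), (\Delta y^\suppred, \Delta x^\suppred, \Delta\tau^\suppred)\rangle = 0$ unnecessary; your route with two applications of the bilinear pairing identity is equivalent, just slightly longer.
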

\begin{proof}
	\ref{item:PredictorG}: Follows directly from \eqref{eq:AffineDirectionG}, \eqref{eq:CenteringDirectionG}, and the fact that $G$ is a linear operator.
	
	\ref{item:PredictorOrthogonal}: Using \ref{item:PredictorG} and the linearity of $G$, we have $0 = G(z+\alpha \Delta z^\suppred - (1-\alpha(1-\gamma))z) = \alpha G(\Delta z^\suppred + (1-\gamma)z)$. By skew-symmetry,
	\begin{align*}
		0 &= \left\langle \begin{bmatrix} \Delta y^\suppred + (1-\gamma)y \\ \Delta x^\suppred + (1-\gamma)x \\ \Delta \tau^\suppred + (1-\gamma)\tau \end{bmatrix}, G(\Delta z^\suppred + (1-\gamma)z) \right\rangle \\
		&= \langle \Delta x^\suppred + (1-\gamma) x, \Delta s^\suppred + (1-\gamma) s \rangle + (\Delta \tau^\suppred + (1-\gamma)\tau ) (\Delta \kappa^\suppred + (1-\gamma)\kappa).
	\end{align*}
	Therefore,
	\begin{equation}
		\label{eq:PredictorOrthogonalIntermediate}
		\langle \Delta x^\suppred, \Delta s^\suppred \rangle + \Delta \tau^\suppred \Delta \kappa^\suppred = -(1-\gamma)[ \langle x, \Delta s^\suppred \rangle + \langle \Delta x^\suppred, s \rangle + \tau \Delta \kappa^\suppred + \Delta \tau^\suppred \kappa ] - (1-\gamma)^2[\langle x,s \rangle + \tau \kappa].
	\end{equation}
	From \labelcref{eq:AffineDirectionXS,eq:CenteringDirectionXS}, we can see that
	\begin{align*}
		\langle x, \Delta s^\suppred \rangle + \langle \Delta x^\suppred, s \rangle
		&= \langle x, \Delta s^\supaff + \gamma \Delta s^\supcen \rangle + \langle \Delta x^\supaff + \gamma \Delta x^\supcen, W x \rangle\\
		&= \langle x, \Delta s^\supaff + W \Delta x^\supaff \rangle + \gamma \langle x, \Delta s^\supcen + W \Delta x^\supcen \rangle\\
		&= \langle x, -s \rangle + \gamma \langle x, \mue \tilde{s} \rangle = \gamma \mue \nu - \langle x,s \rangle.
	\end{align*}
	Moreover, \labelcref{eq:AffineDirectionTauKappa,eq:CenteringDirectionTauKappa} show that
	\begin{align*}
		\tau \Delta \kappa^\suppred + \Delta \tau^\suppred \kappa 
		&= \tau (\Delta \kappa^\supaff + \gamma \Delta \kappa^\supcen) + (\Delta \tau^\supaff + \gamma \Delta \tau^\supcen) \kappa\\
		&= \tau \Delta \kappa^\supaff + \Delta \tau^\supaff \kappa + \gamma ( \tau \Delta \kappa^\supcen + \Delta \tau^\supcen \kappa )\\
		&= - \kappa \tau + \gamma \mue.
	\end{align*}
	Combining the above, we get
	\begin{equation}
		\label{eq:PredictorComplementarityIntermediate}
		\langle x, \Delta s^\suppred \rangle + \langle \Delta x^\suppred, s \rangle + \tau \Delta \kappa^\suppred + \Delta \tau^\suppred \kappa
		= \gamma \mue \nu - \langle x,s \rangle - \kappa \tau + \gamma \mue = (\gamma - 1)[\langle x,s \rangle + \tau \kappa],
	\end{equation}
	by definition of $\mue$. Hence, \eqref{eq:PredictorOrthogonalIntermediate} must be zero.

	\ref{item:PredictorComplementarity}: Using \ref{item:PredictorOrthogonal} to substitute $\langle \Delta x^\suppred, \Delta s^\suppred \rangle + \Delta \tau^\suppred \Delta \kappa^\suppred$, we get
	\begin{align*}
		&\langle x+\alpha\Delta x^\suppred, s + \alpha \Delta s^\suppred \rangle + (\tau+\alpha\Delta \tau^\suppred)(\kappa + \alpha \Delta \kappa^\suppred)\\
		&= \langle x,s \rangle + \tau \kappa + \alpha[\langle x, \Delta s^\suppred \rangle + \langle \Delta x^\suppred, s \rangle  + \tau \Delta \kappa^\suppred + \Delta \tau^\suppred \kappa] + \alpha^2 0
		= (1-\alpha(1-\gamma))[\langle x,s \rangle + \tau \kappa],
	\end{align*}
	where the final equality is due to \eqref{eq:PredictorComplementarityIntermediate}.
\end{proof}

\subsection{Norm of the Predictor}
\label{subsec:PredictorNorm}
Let us now consider what happens to \ref{ass:XSinCone} after the predictor step.
A question that needs answering is how big the norms of the primal and dual predictors $\Delta x^\suppred$ and $\Delta s^\suppred$ can be. Without an upper bound on the norm of these predictors, it might be that taking the step from $z$ to $z_\subone$ yields an $x_\subone \notin \interior K$ or $s_\subone \notin \interior K^*$. In this section, we will propose an upper bound on the norms of $\Delta x^\suppred$ and $\Delta s^\suppred$.

As we will see, the norms of the predictors depend on $\Delta \tau^\suppred \Delta \kappa^\suppred$. We first show that $\Delta \tau^\suppred \Delta \kappa^\suppred \leq (\gamma \mue - \tau \kappa)^2 / (4 \tau \kappa)$.
\begin{lemma}
	\label{lemma:DeltaTauDeltaKappaUpperBound}
	Let $\tau, \kappa, \mue > 0$ be positive constants, and let $\gamma \geq 0$. Then, the optimization problem
	\begin{align*}
		\max_{\Delta \kappa^\supaff, \Delta \kappa^\supcen, \Delta \tau^\supaff, \Delta \tau^\supcen}\, & (\Delta \kappa^\supaff + \gamma \Delta \kappa^\supcen) (\Delta \tau^\supaff + \gamma \Delta \tau^\supcen)\\
		\text{subject to}\, & \tau \Delta \kappa^\supaff + \kappa \Delta \tau^\supaff = - \tau \kappa\\
		& \tau \Delta \kappa^\supcen + \kappa \Delta \tau^\supcen = \mue,
	\end{align*}
	has optimal value $(\gamma \mue - \tau \kappa)^2 / (4 \tau \kappa)$. 
\end{lemma}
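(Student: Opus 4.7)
The plan is to reduce the four-variable problem to a one-constraint scalar optimization via the change of variables $\Delta \tau^\suppred \coloneqq \Delta \tau^\supaff + \gamma \Delta \tau^\supcen$ and $\Delta \kappa^\suppred \coloneqq \Delta \kappa^\supaff + \gamma \Delta \kappa^\supcen$, and then apply the elementary inequality $ab \leq (a+b)^2/4$.

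First I would observe that the objective depends on the four original variables only through $\Delta \tau^\suppred$ and $\Delta \kappa^\suppred$. Multiplying the second constraint by $\gamma$ and adding to the first yields the single linear constraint
\begin{equation*}
	\tau \Delta \kappa^\suppred + \kappa \Delta \tau^\suppred = \gamma \mue - \tau \kappa.
\end{equation*}
Conversely, any pair $(\Delta \tau^\suppred, \Delta \kappa^\suppred)$ satisfying this linear equation can be realized by some feasible choice of $(\Delta \kappa^\supaff, \Delta \kappa^\supcen, \Delta \tau^\supaff, \Delta \tau^\supcen)$; for instance, set $\Delta \tau^\supaff = -\tau$ and $\Delta \kappa^\supaff = 0$, and then solve the second constraint together with the defining equations of $\Delta \tau^\suppred$ and $\Delta \kappa^\suppred$ for $\Delta \tau^\supcen$ and $\Delta \kappa^\supcen$ (this system has two equations in two unknowns with a nonsingular matrix since $\tau, \gamma$ may be zero, but one can also pick another simple completion if $\gamma = 0$, in which case the objective just equals $\Delta \tau^\supaff \Delta \kappa^\supaff$ under the first constraint alone). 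Hence the original problem has the same optimal value as
\begin{equation*}
	\max \bigl\{ \Delta \tau^\suppred \Delta \kappa^\suppred : \tau \Delta \kappa^\suppred + \kappa \Delta \tau^\suppred = \gamma \mue - \tau \kappa \bigr\}.
\end{equation*}

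Next I would apply the identity $4 a b = (a+b)^2 - (a-b)^2$ with $a \coloneqq \tau \Delta \kappa^\suppred$ and $b \coloneqq \kappa \Delta \tau^\suppred$, so that $ab = \tau \kappa \Delta \tau^\suppred \Delta \kappa^\suppred$ and $a + b = \gamma \mue - \tau \kappa$ is fixed. This gives
\begin{equation*}
	4 \tau \kappa \, \Delta \tau^\suppred \Delta \kappa^\suppred \leq (\gamma \mue - \tau \kappa)^2,
\end{equation*}
with equality precisely when $a = b$, i.e. $\tau \Delta \kappa^\suppred = \kappa \Delta \tau^\suppred = (\gamma \mue - \tau \kappa)/2$. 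Since $\tau, \kappa > 0$, dividing yields the claimed upper bound $(\gamma \mue - \tau \kappa)^2 / (4 \tau \kappa)$.

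Finally I would verify attainment by exhibiting a feasible point achieving this value: take $\Delta \kappa^\suppred = (\gamma \mue - \tau \kappa)/(2 \tau)$ and $\Delta \tau^\suppred = (\gamma \mue - \tau \kappa)/(2 \kappa)$, which satisfies the reduced linear constraint, and lift back to the original variables as described above. There is no real obstacle here; the only mild subtlety is making sure the lift from $(\Delta \tau^\suppred, \Delta \kappa^\suppred)$ back to the four variables is valid, which is a trivial linear-algebra check since the two original constraints decouple the ``affine'' and ``centering'' pairs.
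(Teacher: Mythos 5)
Your proof is correct, and it takes a slightly different (and arguably cleaner) route than the paper's. The paper eliminates $\Delta\tau^{\text{aff}}$ and $\Delta\tau^{\text{cen}}$ from the two constraints, writes the objective as a concave quadratic in the single combination $\Delta\kappa^{\text{aff}} + \gamma\Delta\kappa^{\text{cen}}$, and finds the vertex via the first-order condition, then evaluates the objective there. You instead pass immediately to the predictor variables $\Delta\tau^{\text{pred}}, \Delta\kappa^{\text{pred}}$, collapse the two constraints into the single combined constraint $\tau\Delta\kappa^{\text{pred}} + \kappa\Delta\tau^{\text{pred}} = \gamma\mu^{\text{e}} - \tau\kappa$, and apply the AM--GM identity $4ab \le (a+b)^2$; this avoids the explicit vertex computation and makes the structure transparent. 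One presentational slip in your lift step: after fixing $\Delta\tau^{\text{aff}} = -\tau$, $\Delta\kappa^{\text{aff}} = 0$, you actually have \emph{three} linear equations (the second constraint plus the two defining relations for $\Delta\tau^{\text{pred}}, \Delta\kappa^{\text{pred}}$) in the two unknowns $\Delta\tau^{\text{cen}}, \Delta\kappa^{\text{cen}}$, not two; for $\gamma > 0$ the two defining relations determine the unknowns uniquely, and the second constraint is then automatically satisfied precisely because the combined constraint holds. You do correctly flag and handle $\gamma = 0$ separately, so nothing essential is missing, but the parenthetical ``two equations in two unknowns with a nonsingular matrix since $\tau,\gamma$ may be zero'' is garbled and should be rewritten along the lines just described.
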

\begin{proof}
	The constraints show that $\Delta \tau^\supaff = -\tau - \frac{\tau}{\kappa} \Delta \kappa^\supaff$ and $\Delta \tau^\supcen = \frac{\mue}{\kappa} - \frac{\tau}{\kappa} \Delta \kappa^\supcen$. Then, the objective is equal to
	\begin{align*}
		(\Delta \kappa^\supaff + \gamma \Delta \kappa^\supcen) (\Delta \tau^\supaff + \gamma \Delta \tau^\supcen) 
		&= (\Delta \kappa^\supaff + \gamma \Delta \kappa^\supcen) \left(-\tau - \frac{\tau}{\kappa} \Delta \kappa^\supaff + \gamma \left( \frac{\mue}{\kappa} - \frac{\tau}{\kappa} \Delta \kappa^\supcen \right) \right)\\
		&= -\frac{\tau}{\kappa} (\Delta \kappa^\supaff + \gamma \Delta \kappa^\supcen)^2 + (\Delta \kappa^\supaff + \gamma \Delta \kappa^\supcen) \left(-\tau + \gamma \frac{\mue}{\kappa} \right).
	\end{align*}
	This expression is maximized if and only if the first order condition
	\begin{equation*}
		-\frac{2\tau}{\kappa} (\Delta \kappa^\supaff + \gamma \Delta \kappa^\supcen) + \left(-\tau + \gamma \frac{\mue}{\kappa} \right) = 0,
	\end{equation*}
	holds. Hence, all maximizers satisfy $\Delta \kappa^\supaff + \gamma \Delta \kappa^\supcen = \frac{1}{2}(\gamma \mue/\tau - \kappa)$. Therefore, the optimal value is
	\begin{align*}
		-\frac{\tau}{\kappa} (\Delta \kappa^\supaff + \gamma \Delta \kappa^\supcen)^2 + (\Delta \kappa^\supaff + \gamma \Delta \kappa^\supcen) \left(-\tau + \gamma \frac{\mue}{\kappa} \right) &= -\frac{\tau}{4\kappa} \left( \frac{\gamma \mue}{\tau} - \kappa \right)^2 + \frac{1}{2}\left( \frac{\gamma \mue}{\tau} - \kappa \right) \left(-\tau + \gamma \frac{\mue}{\kappa} \right)\\
		&= \frac{(\gamma \mue - \tau \kappa)^2}{4 \tau \kappa}. \qedhere
	\end{align*}
\end{proof}

We are now ready to bound the norm of the predictor.

\begin{theorem}
	\label{thm:PredictorNorms}
	Let $K \subset \mathbb{R}^n$ be a proper cone admitting a $\nu$-LHSCB $\primbar$. Pick $z = (y, x, \tau, s, \kappa)$ such that \labelcref{ass:XSinCone,ass:TauKappaBeta,ass:TauKappaPositive,ass:MuTildeBeta,ass:PrimDelta} hold for some $\beta \in (0,1]$ and $\eta \in [0, 1)$. Let $\primlow, \dualup > 0$ be bounds such that \eqref{eq:ScalingHessianAssumption} holds, and let $\gamma \in \mathbb{R}$.
	Then, the primal and dual predictors $\Delta x^\suppred$ and $\Delta s^\suppred$ satisfy
	\begin{equation}
		\label{eq:PredictorNormWBound}
		\| \Delta x^\suppred \|_W^2 + (\| \Delta s^\suppred \|_W^*)^2 \leq  \mue \left[ \nu \left( 1 -2\gamma + \frac{\gamma^2}{\beta} \right) +1 - \tfrac{1}{2} \beta + \frac{\gamma^2}{2 \beta} - \gamma \right], 
	\end{equation}
	and
	\begin{equation} 
		\label{eq:PredictorNormLocalBound}
		\primlow \| \Delta x^\suppred \|^2_x + \frac{1}{\dualup} \| \Delta s^\suppred \|^2_s \leq \frac{1}{\beta} \left[ \nu \left( 1 -2\gamma + \frac{\gamma^2}{\beta} \right) + 1 - \tfrac{1}{2} \beta + \frac{\gamma^2}{2 \beta} - \gamma \right].
	\end{equation}
\end{theorem}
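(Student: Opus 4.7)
The plan is to take the squared $W$-dual norm of the primal-dual defining equation $W\Delta x^{\suppred} + \Delta s^{\suppred} = -s + \gamma\mue\tilde{s}$ (obtained by summing \eqref{eq:AffineDirectionXS} and $\gamma$ times \eqref{eq:CenteringDirectionXS}), and then use the orthogonality from Lemma~\ref{lemma:PredictorProperties}\ref{item:PredictorOrthogonal} to disentangle $\|\Delta x^{\suppred}\|_W^2$ and $(\|\Delta s^{\suppred}\|_W^*)^2$. Expanding the left-hand side as $\langle W\Delta x^{\suppred} + \Delta s^{\suppred},\, W^{-1}(W\Delta x^{\suppred} + \Delta s^{\suppred})\rangle$ produces $\|\Delta x^{\suppred}\|_W^2 + 2\langle\Delta x^{\suppred},\Delta s^{\suppred}\rangle + (\|\Delta s^{\suppred}\|_W^*)^2$, and replacing $2\langle\Delta x^{\suppred},\Delta s^{\suppred}\rangle$ by $-2\Delta\tau^{\suppred}\Delta\kappa^{\suppred}$ reduces the task to bounding the quantity $(\|-s+\gamma\mue\tilde{s}\|_W^*)^2 + 2\Delta\tau^{\suppred}\Delta\kappa^{\suppred}$.

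For the norm piece, I would exploit $Wx = s$ and $W\tilde{x} = \tilde{s}$ to rewrite it as $\langle -s+\gamma\mue\tilde{s},\, -x + \gamma\mue\tilde{x}\rangle$. Using \eqref{eq:PropertiesLogHomogeneity} and \eqref{eq:PropertiesConjugate} to evaluate the cross products $\langle s,\tilde{x}\rangle = \langle \tilde{s}, x\rangle = \nu$, this equals $\nu\mu - 2\gamma\mue\nu + \gamma^2(\mue)^2\nu\tilde{\mu}$, and assumption \ref{ass:MuTildeBeta} bounds the last summand by $\gamma^2\nu\mue/\beta$. For the $\tau\kappa$ piece, Lemma~\ref{lemma:DeltaTauDeltaKappaUpperBound} gives $2\Delta\tau^{\suppred}\Delta\kappa^{\suppred} \leq (\gamma\mue-\tau\kappa)^2/(2\tau\kappa) = \gamma^2(\mue)^2/(2\tau\kappa) - \gamma\mue + \tau\kappa/2$; I would bound the first summand by $\gamma^2\mue/(2\beta)$ via \ref{ass:TauKappaBeta}, and substitute $\tau\kappa = (\nu+1)\mue - \nu\mu$ (immediate from the definition of $\mue$) into the third summand. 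Adding the two bounds, the $\nu\mu$ contributions combine with net coefficient $+\tfrac{1}{2}$, and the sharper inequality $\nu\mu \leq (\nu+1-\beta)\mue$ (established in the proof of Lemma~\ref{lemma:MuMueBounds}) produces exactly the right-hand side of \eqref{eq:PredictorNormWBound} after routine simplification.

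The localised inequality \eqref{eq:PredictorNormLocalBound} will then follow immediately from the sandwich \eqref{eq:ScalingHessianAssumption}: $\primlow\mu\primhess(x) \preceq W$ gives $\|\Delta x^{\suppred}\|_W^2 \geq \primlow\mu\|\Delta x^{\suppred}\|_x^2$, while $W \preceq (\dualup/\mu)\dualhess(s)^{-1}$ together with Lemma~\ref{lemma:OrderInverseMatrix} yields $(\|\Delta s^{\suppred}\|_W^*)^2 \geq (\mu/\dualup)\|\Delta s^{\suppred}\|_s^2$. Dividing \eqref{eq:PredictorNormWBound} through by $\mu$ and absorbing $\mue/\mu \leq 1/\beta$ from Lemma~\ref{lemma:MuMueBounds} then converts it into \eqref{eq:PredictorNormLocalBound}. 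The main delicate point is the step in the second paragraph: both $\tau\kappa \geq \beta\mue$ and the identity $\tau\kappa = (\nu+1)\mue - \nu\mu$ must be invoked at precisely the right places so that the two appearances of $\nu\mu$ combine favourably before the final application of $\nu\mu \leq (\nu+1-\beta)\mue$; using any looser substitution (e.g.\ $\mu \leq (2-\beta)\mue$ directly from the statement of Lemma~\ref{lemma:MuMueBounds}) overshoots the constant $1-\beta/2$ in \eqref{eq:PredictorNormWBound}.
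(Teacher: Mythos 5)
Your proposal is correct and follows essentially the same route as the paper: Pythagorean orthogonality via Lemma~\ref{lemma:PredictorProperties}\ref{item:PredictorOrthogonal}, the norm identity $(\|{-s}+\gamma\mue\tilde{s}\|_W^*)^2 = \langle -s+\gamma\mue\tilde{s}, -x+\gamma\mue\tilde{x}\rangle$, Lemma~\ref{lemma:DeltaTauDeltaKappaUpperBound} for the $\Delta\tau^\suppred\Delta\kappa^\suppred$ term, and \ref{ass:TauKappaBeta}, \ref{ass:MuTildeBeta} to close. The only cosmetic difference is that where you substitute $\tau\kappa=(\nu+1)\mue-\nu\mu$ and then invoke $\nu\mu\leq(\nu+1-\beta)\mue$, the paper simply groups $\langle x,s\rangle+\tfrac12\tau\kappa=(\nu+1)\mue-\tfrac12\tau\kappa$ and applies $\tau\kappa\geq\beta\mue$ directly, which is the same inequality and sidesteps the ``delicate point'' you flag.
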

\begin{proof}
	Note that for any two vectors $v, w$ with $\langle v, w\rangle = 0$, we have $\| v + w \|^2 = \| v \|^2 + \| w \|^2$.
	As shown by Lemma \ref{lemma:PredictorProperties}\ref{item:PredictorOrthogonal}, $\langle (W^{1/2} \Delta x^\suppred, \Delta \tau^\suppred), (W^{-1/2} \Delta s^\suppred, \Delta \kappa^\suppred) \rangle = 0$, so
	\begin{equation*}
		(\| W \Delta x^\suppred + \Delta s^\suppred \|_W^*)^2 + (\Delta \tau^\suppred + \Delta \kappa^\suppred)^2 = \| \Delta x^\suppred \|_W^2 + (\| \Delta s^\suppred \|_W^*)^2 + (\Delta \tau^\suppred)^2 + (\Delta \kappa^\suppred)^2.
	\end{equation*}
	By \labelcref{eq:AffineDirectionXS,eq:CenteringDirectionXS,eq:DefPredictor}, $W \Delta x^\suppred + \Delta s^\suppred = - s + \gamma \mue \tilde{s}$. Therefore,
	\begin{align*}
		\| \Delta x^\suppred \|_W^2 + (\| \Delta s^\suppred \|_W^*)^2 
		&= (\| - s + \gamma \mue \tilde{s} \|_W^*)^2 + 2 \Delta \tau^\suppred \Delta \kappa^\suppred \\
		&= \langle x,s \rangle - 2 \gamma \mue \langle x, \tilde{s} \rangle + \gamma^2 (\mue)^2 \langle \tilde{x}, \tilde{s} \rangle + 2 \Delta \tau^\suppred \Delta \kappa^\suppred\\
		&= \langle x,s \rangle - 2 \gamma \mue \nu + \gamma^2 (\mue)^2 \langle \tilde{x}, \tilde{s} \rangle + 2 \Delta \tau^\suppred \Delta \kappa^\suppred,
	\end{align*}
	where the final equality used $\langle x, \tilde{s} \rangle = - \langle x, \primgrad(x) \rangle = \nu$. By Lemma \ref{lemma:DeltaTauDeltaKappaUpperBound},
	\begin{align*}
		\| \Delta x^\suppred \|_W^2 + (\| \Delta s^\suppred \|_W^*)^2 
		&\leq \langle x,s \rangle - 2 \gamma \mue \nu + \gamma^2 (\mue)^2 \langle \tilde{x}, \tilde{s} \rangle + \frac{(\gamma \mue - \tau \kappa)^2}{2 \tau \kappa}\\
		&= \langle x,s \rangle - 2 \gamma \mue \nu + \gamma^2 (\mue)^2 \langle \tilde{x}, \tilde{s} \rangle + \frac{\gamma^2 (\mue)^2}{2 \tau \kappa} - \gamma \mue + \tfrac{1}{2} \tau\kappa.
	\end{align*}
	To bound all the above in terms of $\mue$, we use \labelcref{ass:MuTildeBeta,ass:TauKappaBeta}:
	\begin{align*}
		\| \Delta x^\suppred \|_W^2 + (\| \Delta s^\suppred \|_W^*)^2 
		&\leq (\langle x,s \rangle + \tau \kappa) - \tfrac{1}{2} \tau\kappa - 2 \gamma \mue \nu + \gamma^2 \mue \frac{\nu}{\beta} + \frac{\gamma^2 \mue}{2 \beta} - \gamma \mue\\
		&\leq \mue \left[ (\nu+1) -\tfrac{1}{2} \beta - 2 \gamma \nu + \gamma^2 \frac{\nu}{\beta} + \frac{\gamma^2}{2 \beta} - \gamma \right],
	\end{align*}
	which proves \eqref{eq:PredictorNormWBound}.
	Towards proving \eqref{eq:PredictorNormLocalBound}, we note that by \eqref{eq:ScalingHessianAssumption} and Lemma \ref{lemma:OrderInverseMatrix},
	\begin{equation*}
		\primlow \| \Delta x^\suppred \|^2_x + \frac{1}{\dualup} \| \Delta s^\suppred \|^2_s 
		\leq \frac{\| \Delta x^\suppred \|_W^2 + (\| \Delta s^\suppred \|_W^*)^2 }{\mu}.
	\end{equation*}
	The claim now follows from Lemma \ref{lemma:MuMueBounds} and \eqref{eq:PredictorNormWBound}.
\end{proof}

We remark that both that the coefficient of $\nu$ and the constant term in \labelcref{eq:PredictorNormWBound,eq:PredictorNormLocalBound} are nonnegative for all $\gamma \in \mathbb{R}$ and $\beta \in (0,1]$. Concerning the coefficient, since $\gamma^2 \geq 0$, $\gamma^2/\beta \geq \gamma^2$ for all $\beta \in (0,1]$. Then, 
\begin{equation}
	\label{eq:NuCoefficientNonnegative}
	1 -2\gamma + \gamma^2/\beta \geq 1 - 2 \gamma + \gamma^2 = (\gamma-1)^2 \geq 0.
\end{equation}
Next, we show that the constant terms in \labelcref{eq:PredictorNormWBound,eq:PredictorNormLocalBound} are nonnegative.
To see this, it suffices to show $\beta - \tfrac{1}{2} \beta^2 + \tfrac{1}{2}\gamma^2 - \gamma \beta \geq 0$ for all $\gamma \in \mathbb{R}$ and $\beta \in (0,1]$. Since $\beta - \tfrac{1}{2} \beta^2 + \tfrac{1}{2}\gamma^2 - \gamma \beta$ is concave in $\beta$, the claim must hold if $\beta - \tfrac{1}{2} \beta^2 + \tfrac{1}{2}\gamma^2 - \gamma \beta \geq 0$ for all $\beta \in \{0,1\}$ and $\gamma \in \mathbb{R}$.
For $\beta = 0$, $\beta - \tfrac{1}{2} \beta^2 + \tfrac{1}{2}\gamma^2 - \gamma \beta = \tfrac{1}{2}\gamma^2 \geq 0$ for all $\gamma \in \mathbb{R}$, while for $\beta = 1$, $\beta - \tfrac{1}{2} \beta^2 + \tfrac{1}{2}\gamma^2 - \gamma \beta = \tfrac{1}{2}\gamma^2 - \gamma + \tfrac{1}{2} = \tfrac{1}{2} (\gamma - 1)^2 \geq 0$. Hence,
\begin{equation}
	\label{eq:NuConstantNonnegative}
	1 - \tfrac{1}{2} \beta + \frac{\gamma^2}{2 \beta} - \gamma = \frac{\beta - \tfrac{1}{2} \beta^2 + \tfrac{1}{2}\gamma^2 - \gamma \beta }{\beta} \geq 0.
\end{equation}

Moreover, the bounds \labelcref{eq:PredictorNormWBound,eq:PredictorNormLocalBound} are tight in the simple case where $\beta = \gamma = 1$: one can verify that the bounds are then both equal to zero, indicating that $\Delta x^\suppred$ and $ \Delta s^\suppred $ are both zero.

To compress the notation of the upper bounds \labelcref{eq:PredictorNormWBound,eq:PredictorNormLocalBound} somewhat, we define
\begin{equation}
	\label{eq:DefNormub}
	\normub \coloneqq \left[ \nu \left( 1 -2\gamma + \frac{\gamma^2}{\beta} \right) +1 - \tfrac{1}{2} \beta + \frac{\gamma^2}{2 \beta} - \gamma \right],
\end{equation}
such that $\| \Delta x^\suppred \|_W^2 + (\| \Delta s^\suppred \|_W^*)^2 \leq \mue \normub$ and $\primlow \| \Delta x^\suppred \|^2_x +  \| \Delta s^\suppred \|^2_s / \dualup \leq \normub / \beta$.

The following corollary is a straightforward consequence of Theorem \ref{thm:PredictorNorms}.
\begin{corollary}
	\label{cor:InnerProductPredictorBound}
	Under the conditions of Theorem \ref{thm:PredictorNorms},
	\begin{equation*}
		| \langle \Delta x^\suppred, \Delta s^\suppred \rangle | \leq \tfrac{1}{2} \mue \normub \leq \frac{\normub \mu}{2 \beta} .
	\end{equation*}
\end{corollary}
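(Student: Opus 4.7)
The plan is to combine the bound from Theorem \ref{thm:PredictorNorms} with a standard Cauchy--Schwarz / AM--GM argument. Since $W$ is positive definite (as noted after \eqref{eq:DefScalingMatrix}), the dual-norm Cauchy--Schwarz inequality \eqref{eq:CauchySchwarz} applied with $S = W$ gives
\begin{equation*}
    |\langle \Delta x^\suppred, \Delta s^\suppred \rangle| \leq \| \Delta x^\suppred \|_W \, \| \Delta s^\suppred \|_W^*.
\end{equation*}
Then by the AM--GM inequality $ab \leq \tfrac{1}{2}(a^2 + b^2)$ applied to $a = \| \Delta x^\suppred \|_W$ and $b = \| \Delta s^\suppred \|_W^*$, I would obtain
\begin{equation*}
    |\langle \Delta x^\suppred, \Delta s^\suppred \rangle| \leq \tfrac{1}{2}\bigl( \| \Delta x^\suppred \|_W^2 + (\| \Delta s^\suppred \|_W^*)^2 \bigr).
\end{equation*}
The first inequality in the corollary then follows immediately from \eqref{eq:PredictorNormWBound} in Theorem \ref{thm:PredictorNorms}, which bounds the right-hand side by $\mue \normub$.

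For the second inequality, I would invoke Lemma \ref{lemma:MuMueBounds}, whose hypotheses \ref{ass:TauKappaBeta} and \ref{ass:MuTildeBeta} are part of the assumptions of Theorem \ref{thm:PredictorNorms}. That lemma gives $\mue \leq \mu/\beta$, and multiplying by $\tfrac{1}{2}\normub$ yields $\tfrac{1}{2}\mue \normub \leq \normub \mu / (2 \beta)$, which is exactly the second inequality of the claim.

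There is no real obstacle here: both Cauchy--Schwarz and AM--GM are elementary, and the bulk of the work has been done in Theorem \ref{thm:PredictorNorms} and Lemma \ref{lemma:MuMueBounds}. The only thing to verify along the way is that all hypotheses of those two results are indeed included among the hypotheses of Theorem \ref{thm:PredictorNorms} that the corollary inherits, which is immediate.
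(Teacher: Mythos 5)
Your proof is correct and arrives at the same intermediate bound $|\langle \Delta x^\suppred, \Delta s^\suppred \rangle| \leq \tfrac{1}{2}[\| \Delta x^\suppred \|_W^2 + (\| \Delta s^\suppred \|_W^*)^2]$ that the paper uses, after which the conclusion follows from Theorem~\ref{thm:PredictorNorms} and Lemma~\ref{lemma:MuMueBounds} exactly as you describe. The only difference is cosmetic: the paper obtains that bound by expanding $0 \leq (\| W \Delta x^\suppred \pm \Delta s^\suppred \|_W^*)^2$ directly, whereas you route through Cauchy--Schwarz and then AM--GM, which is the same underlying inequality.
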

\begin{proof}
	Note that
	\begin{equation*}
		0 \leq (\| W \Delta x^\suppred + \Delta s^\suppred \|_W^*)^2 = \| \Delta x^\suppred \|_W^2 + (\| \Delta s^\suppred \|_W^*)^2 + 2 \langle \Delta x^\suppred, \Delta s^\suppred \rangle,
	\end{equation*}
	which implies $-\langle \Delta x^\suppred, \Delta s^\suppred \rangle \leq \frac{1}{2} [\| \Delta x^\suppred \|_W^2 + (\| \Delta s^\suppred \|_W^*)^2]$. The upper bound then follows from \eqref{eq:PredictorNormWBound}. Similarly, $\langle \Delta x^\suppred, \Delta s^\suppred \rangle \leq \frac{1}{2} [\| \Delta x^\suppred \|_W^2 + (\| \Delta s^\suppred \|_W^*)^2]$ by considering $(\| W \Delta x^\suppred - \Delta s^\suppred \|_W^*)^2$. The final inequality is due to Lemma \ref{lemma:MuMueBounds}.
\end{proof}
As a second consequence, we can bound the difference between $\mu_\subone$ and $\mu$, in the following sense.
\begin{corollary}
	\label{cor:MuOneOverMu}
	Under the conditions of Theorem \ref{thm:PredictorNorms},
	\begin{equation*}
		\frac{\mu_\subone}{\mu} 
		= 1 + \frac{\alpha^2}{\mu \nu} \langle \Delta x^\suppred, \Delta s^\suppred \rangle + \alpha \left(  \gamma \frac{\mue}{\mu} - 1 \right)
		\leq 1 + \frac{\alpha^2 \normub}{2 \beta \nu} + \alpha \left(  \frac{\gamma }{\beta} - 1 \right).
	\end{equation*}
\end{corollary}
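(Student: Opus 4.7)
The plan is to derive the equality by direct expansion of the inner product and then apply two previously established estimates for the inequality. Since $x_\subone = x + \alpha \Delta x^\suppred$ and $s_\subone = s + \alpha \Delta s^\suppred$, bilinearity gives
\[
\nu \mu_\subone = \langle x, s \rangle + \alpha \bigl[ \langle x, \Delta s^\suppred \rangle + \langle \Delta x^\suppred, s \rangle \bigr] + \alpha^2 \langle \Delta x^\suppred, \Delta s^\suppred \rangle.
\]
The cross term was already computed inside the proof of Lemma \ref{lemma:PredictorProperties}\ref{item:PredictorOrthogonal}: using the defining equations \eqref{eq:AffineDirectionXS} and \eqref{eq:CenteringDirectionXS} together with $Wx = s$ and the identity $\langle x, \tilde s \rangle = \nu$, one obtains $\langle x, \Delta s^\suppred \rangle + \langle \Delta x^\suppred, s \rangle = \gamma \mue \nu - \langle x, s \rangle$. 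Substituting this and dividing through by $\mu \nu$ produces the claimed equality.

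For the inequality, the two terms on the right are handled by separate prior results. The quadratic term is controlled by Corollary \ref{cor:InnerProductPredictorBound}, which yields $\langle \Delta x^\suppred, \Delta s^\suppred \rangle \leq \normub \mu / (2\beta)$ and hence $\alpha^2 \langle \Delta x^\suppred, \Delta s^\suppred \rangle / (\mu \nu) \leq \alpha^2 \normub / (2 \beta \nu)$. For the linear term, Lemma \ref{lemma:MuMueBounds} gives $\mue/\mu \leq 1/\beta$; since $\alpha, \gamma \geq 0$ under Algorithm \ref{alg:MOSEKNonSymmetric}, this implies $\alpha(\gamma \mue/\mu - 1) \leq \alpha(\gamma/\beta - 1)$. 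Adding the two estimates gives the bound.

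No conceptual obstacle arises here—the argument is essentially a short calculation followed by invocation of two prior estimates. The only subtlety worth flagging is that the bound on the linear term silently uses $\alpha, \gamma \geq 0$; without this sign assumption the other half of Lemma \ref{lemma:MuMueBounds} (namely $\mue \geq \mu/(2-\beta)$) would be needed instead.
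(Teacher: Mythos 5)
Your proof is correct and follows essentially the same route as the paper: expand $\nu\mu_\subone$ bilinearly, recognize that the cross term simplifies to $\gamma\mue\nu - \langle x,s\rangle$ via the defining equations for $\Delta z^\suppred$ and $Wx=s$, then apply Corollary~\ref{cor:InnerProductPredictorBound} and Lemma~\ref{lemma:MuMueBounds} for the two halves of the inequality. The paper phrases the cross-term computation as $\langle x, W\Delta x^\suppred + \Delta s^\suppred\rangle = \langle x, -s + \gamma\mue\tilde s\rangle$ rather than recalling the identity from Lemma~\ref{lemma:PredictorProperties}, but these are the same calculation. Your observation that the linear-term bound tacitly uses $\alpha,\gamma \ge 0$ is a fair and correct caveat; the paper relies on the same implicit sign assumption, which is justified by the algorithm's parameter ranges $\alpha\in(0,1]$, $\gamma\in[0,1]$.
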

\begin{proof}
	By definition \eqref{eq:DefPredictor},
	\begin{equation*}
		\frac{\mu_\subone}{\mu}
		= \frac{\langle x + \alpha \Delta x^\suppred, s + \alpha \Delta s^\suppred \rangle}{\mu \nu}
		= 1 + \frac{\alpha}{\mu \nu} \langle x, W \Delta x^\suppred + \Delta s^\suppred \rangle + \frac{\alpha^2}{\mu \nu} \langle \Delta x^\suppred, \Delta s^\suppred \rangle.
	\end{equation*}
	It follows from \eqref{eq:AffineDirectionXS}, \eqref{eq:CenteringDirectionXS}, and \eqref{eq:DefPredictor} that $W \Delta x^\suppred + \Delta s^\suppred = - s + \gamma \mue \tilde{s}$. This proves the desired equality. The inequality follows from Corollary \ref{cor:InnerProductPredictorBound} and Lemma \ref{lemma:MuMueBounds}.
\end{proof}

\subsection{Tau Kappa After Prediction}
\label{subsec:PredictorTauKappa}
With \Cref{thm:PredictorNorms}, we can derive values for $\alpha$ such that $x_\subone$ and $s_\subone$ lie in the Dikin ellipsoid around $x$ and $s$ respectively, which implies $x_\subone \in \interior K$ and $s_\subone \in \interior K^*$. We therefore move on to the question what happens to \labelcref{ass:TauKappaPositive,ass:TauKappaBeta} after the predictor step.
This question is answered by the following lemma.


\begin{lemma}
	\label{lemma:PredictorTauKappa}
	Let $K \subset \mathbb{R}^n$ be a proper cone admitting a $\nu$-LHSCB $\primbar$. Pick $z = (y, x, \tau, s, \kappa)$ such that \labelcref{ass:XSinCone,ass:TauKappaBeta,ass:TauKappaPositive,ass:MuTildeBeta,ass:PrimDelta} hold for some $\beta \in (0,1]$ and $\eta \in [0, 1)$. Let $\alpha \leq 1$ and $\gamma \in \mathbb{R}$.
	Then, 
	\begin{equation}
		\label{eq:TauKappaOneLowerBound}
		\tau_\subone \kappa_\subone 
		\geq \frac{\mue_\subone}{1 - \alpha(1-\gamma)} \left( \beta (1-\alpha) + \alpha \gamma - \tfrac{1}{2} \alpha^2 \normub \right).
	\end{equation}
	If $\beta < 1$ or $\gamma < 1$, and 
	\begin{equation}
		\label{eq:TauKappaPositiveCondition}
		0 \leq \alpha < \frac{\gamma - \beta + \sqrt{(\beta-\gamma)^2 + 2 \beta \normub}}{\normub},
	\end{equation}
	then $\tau_\subone > 0$ and $\kappa_\subone >0$.
\end{lemma}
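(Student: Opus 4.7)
The plan is to expand $\tau_\subone \kappa_\subone = (\tau + \alpha \Delta\tau^\suppred)(\kappa + \alpha \Delta\kappa^\suppred)$ as a quadratic in $\alpha$ and bound each coefficient using results already established for the predictor. The linear coefficient $\tau\Delta\kappa^\suppred + \kappa\Delta\tau^\suppred$ equals $-\tau\kappa + \gamma\mue$ by combining \eqref{eq:AffineDirectionTauKappa} and \eqref{eq:CenteringDirectionTauKappa} according to the definition \eqref{eq:DefPredictor}. The quadratic coefficient $\Delta\tau^\suppred \Delta\kappa^\suppred$ equals $-\langle \Delta x^\suppred, \Delta s^\suppred\rangle$ by Lemma \ref{lemma:PredictorProperties}\ref{item:PredictorOrthogonal}, which by Corollary \ref{cor:InnerProductPredictorBound} is at least $-\tfrac{1}{2}\mue \normub$. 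Using $\alpha \leq 1$ together with \ref{ass:TauKappaBeta} to rewrite $(1-\alpha)\tau\kappa \geq (1-\alpha)\beta\mue$ and collecting terms yields
\begin{equation*}
\tau_\subone \kappa_\subone \geq \mue \bigl( \beta(1-\alpha) + \alpha\gamma - \tfrac{1}{2}\alpha^2 \normub \bigr),
\end{equation*}
and \eqref{eq:TauKappaOneLowerBound} follows from Lemma \ref{lemma:PredictorProperties}\ref{item:PredictorComplementarity}, which gives $\mue = \mue_\subone/(1-\alpha(1-\gamma))$.

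For the positivity statement, I would view the bracket $f(\alpha) := \beta(1-\alpha) + \alpha\gamma - \tfrac{1}{2}\alpha^2\normub$ as a downward-opening quadratic in $\alpha$ with $f(0) = \beta > 0$. When $\beta < 1$ or $\gamma < 1$, at least one of the inequalities in \eqref{eq:NuCoefficientNonnegative} and \eqref{eq:NuConstantNonnegative} is strict (using $\nu \geq 1$), so $\normub > 0$ and $f$ is genuinely quadratic. Applying the quadratic formula to $f(\alpha) = 0$ places its positive root at exactly the upper bound of \eqref{eq:TauKappaPositiveCondition}, so $f(\alpha) > 0$ throughout the specified interval, giving $\tau_\subone \kappa_\subone > 0$ there.

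Finally, to upgrade $\tau_\subone \kappa_\subone > 0$ to individual positivity of $\tau_\subone$ and $\kappa_\subone$, I would invoke a continuity argument: both quantities are affine (hence continuous) in $\alpha$ with values $\tau > 0$ and $\kappa > 0$ at $\alpha = 0$ by \ref{ass:TauKappaPositive}, so if either crossed zero on the admissible interval, the intermediate value theorem would produce a point where the product vanished, contradicting the strict lower bound. The main bookkeeping obstacle I anticipate is verifying that $\normub > 0$ precisely under the hypothesis ``$\beta < 1$ or $\gamma < 1$''; this amounts to tracing through the two nonnegativity proofs surrounding \eqref{eq:NuCoefficientNonnegative} and \eqref{eq:NuConstantNonnegative} and observing that joint equality forces $\beta = \gamma = 1$.
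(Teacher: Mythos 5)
Your proposal is correct and follows essentially the same route as the paper: expand $\tau_\subone\kappa_\subone$ as a quadratic in $\alpha$, identify the linear coefficient as $-\tau\kappa + \gamma\mue$ via \eqref{eq:AffineDirectionTauKappa}--\eqref{eq:CenteringDirectionTauKappa}, bound the quadratic coefficient through Lemma~\ref{lemma:PredictorProperties}\ref{item:PredictorOrthogonal} and Corollary~\ref{cor:InnerProductPredictorBound}, apply \ref{ass:TauKappaBeta} with $\alpha \leq 1$, rewrite via Lemma~\ref{lemma:PredictorProperties}\ref{item:PredictorComplementarity}, locate the positive root of the downward quadratic, and close with the same intermediate-value-theorem argument for the individual signs. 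The only slight imprecision is your phrasing about ``at least one of the inequalities \eqref{eq:NuCoefficientNonnegative}/\eqref{eq:NuConstantNonnegative} being strict''---what is actually needed (and what the paper verifies) is that the coefficient $1-2\gamma+\gamma^2/\beta$ of $\nu$ is strictly positive whenever $\beta<1$ or $\gamma<1$, which combined with the nonnegative constant term and $\nu\geq 1$ yields $\normub>0$; but this is exactly the content of your anticipated ``bookkeeping obstacle,'' so the argument goes through.
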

\begin{proof}
	Recall that $z_\subone = z + \alpha \Delta z^\suppred$.
	By \labelcref{eq:AffineDirectionTauKappa,eq:CenteringDirectionTauKappa,eq:DefPredictor},
	\begin{align*}
		\tau_\subone \kappa_\subone 
		&= \tau \kappa + \alpha (\tau \Delta \kappa^\suppred + \kappa \Delta \tau^\suppred) + \alpha^2 \Delta \tau^\suppred \Delta \kappa^\suppred\\
		&= \tau \kappa + \alpha (-\tau \kappa + \gamma \mue) + \alpha^2 \Delta \tau^\suppred \Delta \kappa^\suppred\\
		&\geq (1- \alpha) \beta \mue + \alpha \gamma \mue + \alpha^2 \Delta \tau^\suppred \Delta \kappa^\suppred,
	\end{align*}
	where the inequality is due to \ref{ass:TauKappaBeta}. Hence, the main remaining task is to lower bound $\Delta \tau^\suppred \Delta \kappa^\suppred$. By Lemma \ref{lemma:PredictorProperties}\ref{item:PredictorOrthogonal}, $\Delta \tau^\suppred \Delta \kappa^\suppred = -\langle \Delta x^\suppred, \Delta s^\suppred \rangle$. 
	It therefore follows from Corollary \ref{cor:InnerProductPredictorBound} that $ \tau_\subone \kappa_\subone \geq \mue \left( \beta (1-\alpha) + \alpha \gamma - \tfrac{1}{2} \alpha^2 \normub \right). $
	Lemma \ref{lemma:PredictorProperties}\ref{item:PredictorComplementarity} then 
	proves the lower bound.
	
	It can be seen from \eqref{eq:NuCoefficientNonnegative} that $1 - 2\gamma + \gamma^2/\beta \leq 0$ only if $\beta = \gamma = 1$. 
	To avoid trivial difficulties, we therefore assume that $\beta < 1$ or $\gamma < 1$. Then, $\normub = \nu \left( 1 -2\gamma + \gamma^2 / \beta \right) +1 - \tfrac{1}{2} \beta + \tfrac{1}{2} \gamma^2 /\beta - \gamma$ is positive by \eqref{eq:NuConstantNonnegative}.
	The zeros of the right hand side of \eqref{eq:TauKappaOneLowerBound} in $\alpha$ are
	\begin{equation*}
		\frac{\gamma - \beta \pm \sqrt{(\beta-\gamma)^2 + 2 \beta \normub}}{\normub}.
	\end{equation*}
	We see that the square root above is greater than $|\gamma - \beta|$, and the denominator is positive. Thus, for all $\alpha$ satisfying \eqref{eq:TauKappaPositiveCondition}, the lower bound \eqref{eq:TauKappaOneLowerBound} on $\tau_\subone \kappa_\subone$ is positive.
	
	If $\tau_\subone \kappa_\subone > 0$, then $\tau_\subone > 0$ and $\kappa_\subone > 0$, or $\tau_\subone < 0$ and $\kappa_\subone < 0$. For the sake of contradiction, suppose that $\tau_\subone < 0$ and $\kappa_\subone < 0$. Since $\alpha \mapsto \tau + \alpha \Delta \tau^\suppred $ is continuous and $\tau > 0$, the intermediate value theorem implies that there exists some $\alpha_0 \in (0, \alpha)$ where $\tau + \alpha_0 \Delta \tau^\suppred = 0$. But for this $\alpha_0$, we have $(\tau + \alpha_0 \Delta \tau^\suppred) (\kappa + \alpha_0 \Delta \kappa^\suppred) = 0$, while the lower bound \eqref{eq:TauKappaOneLowerBound} is positive for all $\alpha_0 \in (0, \alpha)$. Thus, we have a contradiction, and therefore $\tau_\subone > 0$ and $\kappa_\subone > 0$.
\end{proof}

\subsection{Shadow Distance After Prediction}
\label{subsec:PredictorShadowDistance}
One may wonder what happens to assumption \ref{ass:MuTildeBeta} after the predictor step. However, we did not find a satisfying way to bound $\mue_\subone \tilde{\mu}_\subone$. This is the main reason we introduced a corrector in this analysis, which we consider in more detail in Section \ref{sec:Corrector}.

To complete the analysis of the predictor, we consider \ref{ass:PrimDelta}. Its analysis will be simplified by the observation that if $\lambda \in (0,1]$ and $\epsilon \geq 0$, then
\begin{equation}
	\label{eq:MaximumSCAidOne}
	\frac{1+\epsilon}{\lambda} - 1 \geq 1 - \lambda (1-\epsilon).
\end{equation}
\begin{lemma}
	\label{lemma:PrimDeltaOne}
	Let $K \subset \mathbb{R}^n$ be a proper cone admitting a $\nu$-LHSCB $\primbar$. Pick $z = (y, x, \tau, s, \kappa)$ such that \labelcref{ass:XSinCone,ass:TauKappaBeta,ass:TauKappaPositive,ass:MuTildeBeta,ass:PrimDelta} hold for some $\beta \in (0,1]$ and $\eta \in [0, 1)$. 
	Assume \eqref{eq:ScalingHessianAssumption} holds with $\dualup = (1+\epsilon) / \lambda$ and $\duallow = \lambda (1-\epsilon)$
	for some $\lambda \in (0,1]$ and $\epsilon \geq 0$. Let $\alpha \in [0,\sqrt{\beta  \min\{ \primlow, 1/\primup \} / \theta })$ and $\gamma \in \mathbb{R}$.
	Then, $\| \primdelta_\subone \|_{x_\subone}$ is at most
	\begin{equation*} 
		\frac{1}{1 - \alpha \sqrt{\normub / (\beta \primlow)}} \left( (1-\alpha) \|\primdelta \|_x + \frac{\alpha^2 \normub}{2 \beta \sqrt{\nu} (1- \| \primdelta \|_x)}
		+ \frac{\alpha \sqrt{\normub}}{\sqrt{\primlow \beta}} \left( \omega_1 \left( \frac{\dualup}{1 - \alpha \sqrt{\dualup \normub/\beta} } - 1 \right) + \omega_2 \right) \right),
	\end{equation*}
	where
	\begin{equation*}
		\omega_1 \coloneqq 1 + \frac{\alpha^2 \normub}{2 \beta \nu} + \alpha \left(  \frac{\gamma }{\beta} - 1 \right) \qquad \text{and} \qquad \omega_2 \coloneqq \frac{\alpha^2 \normub}{2 \beta \nu} + \alpha \max\left\{ \frac{\gamma }{\beta} - 1, 1 - \frac{\gamma}{2-\beta} \right\}.
	\end{equation*}
\end{lemma}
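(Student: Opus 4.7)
The plan is to bound $\|\primdelta_\subone\|_{x_\subone}$ in two main stages: first change the local norm from $x_\subone$ to $x$ via self-concordance, then derive an algebraic decomposition of $\primdelta_\subone$ using the scaling-matrix identities and bound each term in $\|\cdot\|_x$. For the change of norms, Theorem \ref{thm:PredictorNorms} together with the hypothesis on $\alpha$ gives $\|\alpha\Delta x^\suppred\|_x \leq \alpha\sqrt{\normub/(\beta\primlow)} < 1$, so self-concordance \eqref{eq:DefSelfConcordance} yields $\|\primdelta_\subone\|_{x_\subone} \leq \|\primdelta_\subone\|_x/(1 - \alpha\sqrt{\normub/(\beta\primlow)})$, which is the leading prefactor in the stated bound.

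For the decomposition, apply $W$ to $\primdelta_\subone - (1-\alpha)\primdelta$ and simplify using $W\primdelta = \dualdelta$, $W\tilde{x} = \tilde{s}$, the combined predictor equation $W\Delta x^\suppred + \Delta s^\suppred = -s + \gamma\mue\tilde{s}$, the splitting $\mu_\subone W\tilde{x}_\subone = \mu_\subone\tilde{s} + \mu_\subone W(\tilde{x}_\subone - \tilde{x})$, and the identity $(1-\alpha)\mu + \alpha\gamma\mue - \mu_\subone = -\tfrac{\alpha^2}{\nu}\langle\Delta x^\suppred, \Delta s^\suppred\rangle$ from Corollary \ref{cor:MuOneOverMu}. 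Using $W^{-1}\tilde{s} = \tilde{x}$, this yields
\begin{equation*}
\primdelta_\subone = (1-\alpha)\primdelta - \tfrac{\alpha^2}{\nu}\langle\Delta x^\suppred,\Delta s^\suppred\rangle \tilde{x} - \alpha W^{-1}\Delta s^\suppred - \mu_\subone(\tilde{x}_\subone - \tilde{x}).
\end{equation*}
The first term contributes $(1-\alpha)\|\primdelta\|_x$. For the second, combine $\|\tilde{x}\|_x \leq (\sqrt{\nu}+\|\primdelta\|_x)/\mu$ (from $\|x\|_x = \sqrt{\nu}$ and $\mu\tilde{x} = x - \primdelta$), Corollary \ref{cor:InnerProductPredictorBound}, and the elementary inequality $(\sqrt{\nu}+\|\primdelta\|_x)/\nu \leq 1/(\sqrt{\nu}(1-\|\primdelta\|_x))$ valid for $\nu \geq 1$; this produces the middle summand. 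For the remaining two terms, group them and expand $\tilde{x}_\subone - \tilde{x} = -\alpha\int_0^1\dualhess(s_t)\Delta s^\suppred\,dt$ with $s_t = s + t\alpha\Delta s^\suppred$; splitting the integrand produces $\alpha[W^{-1} - \mu_\subone\dualhess(s)]\Delta s^\suppred - \alpha\mu_\subone\int_0^1[\dualhess(s_t) - \dualhess(s)]\Delta s^\suppred\,dt$. Bound the first piece using the two-sided Löwner inequality $\mu\dualhess(s)/\dualup \preceq W^{-1} \preceq \mu\dualhess(s)/\duallow$ (from \eqref{eq:ScalingHessianAssumption} and Lemma \ref{lemma:OrderInverseMatrix}) together with the two-sided bound on $\mu_\subone/\mu$ from Corollary \ref{cor:MuOneOverMu} and Lemma \ref{lemma:MuMueBounds}, yielding the $\omega_2$ contribution; bound the second via the self-concordance estimate $\dualhess(s_t) - \dualhess(s) \preceq [(1-t\rho)^{-2} - 1]\dualhess(s)$ with $\rho = \alpha\|\Delta s^\suppred\|_s \leq \alpha\sqrt{\dualup\normub/\beta}$ (from Theorem \ref{thm:PredictorNorms}) and the upper bound $\mu_\subone/\mu \leq \omega_1$, yielding the $\omega_1[\dualup/(1-\alpha\sqrt{\dualup\normub/\beta}) - 1]$ contribution. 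The shared prefactor $\alpha\sqrt{\normub/(\primlow\beta)}$ arises when translating the operator inequalities on $\dualhess(s)$ into $\|\cdot\|_x$ via $\primhess(x) \preceq W/(\primlow\mu)$ and $\|\Delta s^\suppred\|_{W^{-1}}^2 \leq \normub\mue$ from Theorem \ref{thm:PredictorNorms}.

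The main obstacle will be the bookkeeping in the combined term $\alpha W^{-1}\Delta s^\suppred + \mu_\subone(\tilde{x}_\subone - \tilde{x})$: the two-sided scaling inequalities on $W^{-1}$, the two-sided bound on $\mu_\subone/\mu$, and the self-concordance integral on $\dualhess(s_t)$ must be combined and then converted from operator-level inequalities on $\dualhess(s)$ into vector bounds in $\|\cdot\|_x$ via the scaling matrix, so that the $\omega_1$, $\omega_2$, and $\dualup/(1-\alpha\sqrt{\dualup\normub/\beta})$ structure of the stated bound emerges cleanly.
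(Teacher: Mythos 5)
Your decomposition of $\primdelta_\subone$ into $(1-\alpha)\primdelta$, the $\langle\Delta x^\suppred,\Delta s^\suppred\rangle$-times-$\tilde{x}$ term, and the remainder is algebraically identical to the paper's (since $-\alpha W^{-1}\Delta s^\suppred - \mu_\subone(\tilde{x}_\subone - \tilde{x}) = \alpha\int_0^1[\mu_\subone\dualhess(s+t\alpha\Delta s^\suppred) - W^{-1}]\Delta s^\suppred\,dt$), and your treatment of the first two terms and the final norm change all match the paper. The gap is in the third term. You split it as $\alpha[W^{-1}-\mu_\subone\dualhess(s)]\Delta s^\suppred - \alpha\mu_\subone\int_0^1[\dualhess(s_t)-\dualhess(s)]\Delta s^\suppred\,dt$ and claim the first piece yields the $\omega_2$ contribution. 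This is not accurate: $\|W^{-1}-\mu_\subone\dualhess(s)\|_{W^{-1}}$ is governed by $\max\{(\mu_\subone/\mu)\dualup - 1,\ 1 - (\mu_\subone/\mu)\duallow\}$, which mixes the scaling error ($\dualup,\duallow$) with the $\mu_\subone/\mu$ error and is generally strictly larger than $\omega_2 = |1-\mu_\subone/\mu|$ whenever $\dualup > 1$. Similarly, your second piece comes out as $\omega_1[\dualup/(1-\rho)-\dualup]$ rather than the stated $\omega_1[\dualup/(1-\rho)-1]$.

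The paper avoids this mismatch by factoring out $\mu_\subone/\mu$ as a scalar first, writing $\mu_\subone\dualhess(s_t) - W^{-1} = \tfrac{\mu_\subone}{\mu}[\mu\dualhess(s_t) - W^{-1}] + (\tfrac{\mu_\subone}{\mu}-1)W^{-1}$. The second summand then has operator norm exactly $|1-\mu_\subone/\mu| \leq \omega_2$ (it's just $W^{-1}$), while the first keeps $\mu\dualhess(s_t) - W^{-1}$ together so that self-concordance and the scaling bounds combine to give $\dualup/(1-t\rho)^2 - 1$, multiplied by $\mu_\subone/\mu \leq \omega_1$. It so happens that the sum of your two pieces is $\leq \omega_1[\dualup/(1-\rho)-1] + \omega_2$ for all cases (the extra $\dualup$ factors compensate), so your route can be repaired by carrying the $\max$ explicitly and comparing against the target; but as written the claimed intermediate bounds do not hold. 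The cleanest fix is to adopt the paper's scalar-first split.
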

\begin{proof}
	By \labelcref{eq:AffineDirectionXS,eq:CenteringDirectionXS,eq:DefPredictor}, and the fundamental theorem of calculus,
	\begin{align}
		\primdelta_\subone &= x_\subone - \mu_\subone \tilde{x}_\subone \nonumber\\
		&= x + \alpha \Delta x^\suppred + \mu_\subone \dualgrad \left( s + \alpha \Delta s^\suppred \right) \nonumber\\
		&= x + \alpha \left( \gamma \mue \tilde{x} - x - W^{-1} \Delta s^\suppred \right) + \mu_\subone \left[ \dualgrad( s) + \int_0^1 \dualhess\left( s + t \alpha \Delta s^\suppred \right) \alpha \Delta s^\suppred \diff t \right] \nonumber\\
		&= ( 1 - \alpha) x + \alpha \left( \gamma \mue \tilde{x} - W^{-1} \Delta s^\suppred \right) - \mu_\subone \tilde{x} + \alpha \int_0^1 \mu_\subone \dualhess\left( s + t \alpha \Delta s^\suppred \right) \Delta s^\suppred \diff t \nonumber\\
		&= ( 1 - \alpha ) (x - \mu \tilde{x}) 
		+ \left(\mu - \mu_\subone + \alpha \left( \gamma \mue - \mu \right) \right) \tilde{x} + \alpha \int_0^1 \left[ \mu_\subone \dualhess\left( s + t \alpha \Delta s^\suppred \right) - W^{-1} \right] \Delta s^\suppred \diff t \nonumber\\
		&= \left( 1 - \alpha \right) \primdelta
		+ \left(1 - \frac{\mu_\subone}{\mu} + \alpha \left( \gamma \frac{\mue}{\mu} - 1 \right) \right) \mu \tilde{x} + \alpha \int_0^1 \left[ \mu_\subone \dualhess\left( s + t \alpha \Delta s^\suppred \right) - W^{-1} \right] \Delta s^\suppred \diff t. \label{eq:PrimDeltaOneThreeTerms}
	\end{align}
	We will bound each of the three terms in \eqref{eq:PrimDeltaOneThreeTerms} separately in the $x$-norm. Of course, $\| \primdelta \|_x$ is known to be bounded by assumption \ref{ass:PrimDelta}. 
	
	To bound the second term of \eqref{eq:PrimDeltaOneThreeTerms}, we note that by Corollaries \ref{cor:MuOneOverMu} and \ref{cor:InnerProductPredictorBound},
	\begin{equation*}
		\left| 1-\frac{\mu_\subone}{\mu} + \alpha \left(  \gamma \frac{\mue}{\mu} - 1 \right) \right| = \left| - \frac{\alpha^2}{\mu \nu} \langle  \Delta x^\suppred, \Delta s^\suppred \rangle \right|
		\leq \frac{\alpha^2 \normub}{2 \beta \nu}.
	\end{equation*}
	Therefore, since $\| \mu \tilde{x} \|_x \leq \| \mu \tilde{x} \|_{\mu \tilde{x}} / (1- \| \primdelta \|_x) = \sqrt{\nu} / (1- \| \primdelta \|_x)$ by \eqref{eq:DefSelfConcordance},
	\begin{equation}
		\label{eq:PrimDeltaOneSecondTerm}
		\left\| \left(1 - \frac{\mu_\subone}{\mu} + \alpha \left(  \gamma \frac{\mue}{\mu} - 1 \right) \right) \mu \tilde{x} \right\|_{x} 
		\leq \frac{\alpha^2 \normub}{2 \beta \sqrt{\nu} (1- \| \primdelta \|_x)}.
	\end{equation}
	
	To bound the last term in \eqref{eq:PrimDeltaOneThreeTerms}, note that $\left\| \left[ \mu_\subone \dualhess\left( s + t \alpha \Delta s^\suppred \right) - W^{-1} \right] \Delta s^\suppred \right\|_W$ is at most
	\begin{equation}
		\label{eq:PrimDeltaOneTermThree}
		\frac{\mu_\subone}{\mu} \left\| \left[ \mu \dualhess\left( s + t \alpha \Delta s^\suppred \right) - W^{-1} \right] \Delta s^\suppred \right\|_W + \left| 1- \frac{\mu_\subone}{\mu}  \right| \| \Delta s^\suppred \|_W^*.
	\end{equation}
	The first term in \eqref{eq:PrimDeltaOneTermThree} can be bounded using the operator norm \eqref{eq:DefOperatorNorm}:
	\begin{align}
		\left\| \left[ \mu \dualhess\left( s + t \alpha \Delta s^\suppred \right) - W^{-1} \right] \Delta s^\suppred \right\|_W 
		&\leq \| \Delta s^\suppred \|_W^* \sup_{\| u \|_W^* \leq 1} \left\| \left[ \mu \dualhess\left( s + t \alpha \Delta s^\suppred \right) - W^{-1} \right] u \right\|_W \nonumber\\
		&= \| \Delta s^\suppred \|_W^* \left\| \mu \dualhess\left( s + t \alpha \Delta s^\suppred \right) - W^{-1} \right\|_{W^{-1}}. \label{eq:PrimDeltaOneTermThreeIntermediateOne}
	\end{align}
	With Lemma \ref{lemma:NormToMatrixInequality} in mind, we will proceed by bounding $\mu \dualhess( s + t \alpha \Delta s^\suppred )$ in terms of $W^{-1}$. By self-concordance, \eqref{eq:ScalingHessianAssumption}, and Lemma \ref{lemma:OrderInverseMatrix},
	\begin{equation*}
		 \mu \dualhess\left( s + t \alpha \Delta s^\suppred \right) \preceq \frac{1}{(1 - t \alpha \| \Delta s^\suppred \|_s )^2} \mu \dualhess( s ) \preceq \frac{\dualup}{(1 - t \alpha \| \Delta s^\suppred \|_s )^2} W^{-1}.
	\end{equation*}
	Similarly,
	\begin{equation*}
		\mu \dualhess\left( s + t \alpha \Delta s^\suppred \right) \succeq (1 - t \alpha \| \Delta s^\suppred \|_s )^2 \mu \dualhess( s ) \succeq (1 - t \alpha \| \Delta s^\suppred \|_s )^2 \duallow W^{-1}.
	\end{equation*}
	Therefore, Lemma \ref{lemma:NormToMatrixInequality} shows
	\begin{align}
		\left\| \mu \dualhess\left( s + t \alpha \Delta s^\suppred \right) - W^{-1} \right\|_{W^{-1}} 
		&\leq \max\left\{ \frac{\dualup}{(1 - t \alpha \| \Delta s^\suppred \|_s )^2} - 1, 1 - (1 - t \alpha \| \Delta s^\suppred \|_s )^2 \duallow \right\} \nonumber\\
		&= \frac{\dualup}{(1 - t \alpha \| \Delta s^\suppred \|_s )^2} - 1, \label{eq:PrimDeltaOneTermThreeIntermediateTwo}
	\end{align}
	where the equality holds because of the assumed form of $\dualup$ and $\duallow$, and \eqref{eq:MaximumSCAidOne}. Thus, we have bounded the first term of \eqref{eq:PrimDeltaOneTermThree}.

	To bound $\mu_\subone / \mu$ in \eqref{eq:PrimDeltaOneTermThree}, we can use 
	Corollary \ref{cor:MuOneOverMu}:
	\begin{equation}
		\label{eq:PrimDeltaMuPlusOverMu}
		\frac{\mu_\subone}{\mu} 
		\leq 1 + \frac{\alpha^2 \normub}{2 \beta \nu} + \alpha \left(  \frac{\gamma }{\beta} - 1 \right) = \omega_1.
	\end{equation}
	A lower bound can be found similarly: by Corollary \ref{cor:InnerProductPredictorBound} and Lemma \ref{lemma:MuMueBounds},
	\begin{equation*}
		\frac{\mu_\subone}{\mu} 
		= 1 + \frac{\alpha^2}{\mu \nu} \langle \Delta x^\suppred, \Delta s^\suppred \rangle + \alpha \left(  \gamma \frac{\mue}{\mu} - 1 \right)
		\geq 1 - \frac{\alpha^2 \normub}{2 \beta \nu} + \alpha \left(  \frac{\gamma }{2-\beta} - 1 \right).
	\end{equation*}
	Therefore,
	\begin{equation*}
		\left| 1 - \frac{\mu_\subone}{\mu} \right| \leq \max\left\{ \frac{\alpha^2 \normub}{2 \beta \nu} + \alpha \left(  \frac{\gamma }{\beta} - 1 \right), \frac{\alpha^2 \normub}{2 \beta \nu} - \alpha \left(  \frac{\gamma }{2-\beta} - 1 \right) \right\} 
		= \frac{\alpha^2 \normub}{2 \beta \nu} + \alpha \max\left\{ \frac{\gamma }{\beta} - 1, 1 - \frac{\gamma}{2-\beta} \right\} = \omega_2.
	\end{equation*}
	Combining this bound on $| 1 - \mu_\subone / \mu|$ with \eqref{eq:PrimDeltaOneTermThreeIntermediateOne}, \eqref{eq:PrimDeltaOneTermThreeIntermediateTwo}, and \eqref{eq:PrimDeltaMuPlusOverMu},
	\begin{equation*}
		\left\| \left[ \mu_\subone \dualhess\left( s + t \alpha \Delta s^\suppred \right) - W^{-1} \right] \Delta s^\suppred \right\|_W
		\leq \left( \omega_1 \left( \frac{\dualup}{(1 - t \alpha \| \Delta s^\suppred \|_s )^2} - 1 \right) + \omega_2 \right) \| \Delta s^\suppred \|_W^*.
	\end{equation*}
	
	Consequently, by \eqref{eq:ScalingHessianAssumption},
	\begin{align}
		&\left\| \alpha \int_0^1 \left[ \mu_\subone \dualhess\left( s + t \alpha \Delta s^\suppred \right) - W^{-1} \right] \Delta s^\suppred \diff t \right\|_{x} \nonumber\\
		&\leq \alpha \int_0^1 \frac{1}{\sqrt{\primlow \mu}} \left\| \left[ \mu_\subone \dualhess\left( s + t \alpha \Delta s^\suppred \right) - W^{-1} \right] \Delta s^\suppred \right\|_W \diff t \nonumber\\
		&\leq \frac{\alpha}{\sqrt{\primlow \mu}} \int_0^1 \left( \omega_1 \left( \frac{\dualup}{(1 - t \alpha \| \Delta s^\suppred \|_s )^2} - 1 \right) + \omega_2 \right) \| \Delta s^\suppred \|_W^* \diff t \nonumber\\
		&= \frac{\alpha}{\sqrt{\primlow \mu}} \left( \omega_1 \left( \frac{\dualup}{1 - \alpha \| \Delta s^\suppred \|_s } - 1 \right) + \omega_2 \right) \| \Delta s^\suppred \|_W^* \nonumber \\
		&\leq \frac{\alpha}{\sqrt{\primlow}} \left( \omega_1 \left( \frac{\dualup}{1 - \alpha \sqrt{\dualup \normub/\beta} } - 1 \right) + \omega_2 \right) \sqrt{\normub / \beta}, \label{eq:PrimDeltaOneThirdTerm}
	\end{align}
	where the final inequality uses Theorem \ref{thm:PredictorNorms} and Lemma \ref{lemma:MuMueBounds}.
	
	To complete the proof, we note that by \eqref{eq:DefSelfConcordance} and \eqref{eq:PredictorNormLocalBound}, 
	\begin{equation*}
		\| \primdelta_\subone \|_{x_\subone} 
		\leq \frac{\| \primdelta_\subone \|_{x}}{1 - \alpha \| \Delta x^\suppred \|_{x}} 
		\leq \frac{\| \primdelta_\subone \|_{x}}{1 - \alpha \sqrt{\normub / (\beta \primlow)}}.
	\end{equation*}
	By using the triangle inequality on \eqref{eq:PrimDeltaOneThreeTerms} along with the bounds \eqref{eq:PrimDeltaOneSecondTerm} and \eqref{eq:PrimDeltaOneThirdTerm} on the second and third term of \eqref{eq:PrimDeltaOneThreeTerms}, the claim follows.
\end{proof}

\section{Properties of the Corrector}
\label{sec:Corrector}
We have analyzed what happens to the assumptions \labelcref{ass:XSinCone,ass:TauKappaPositive,ass:TauKappaBeta,ass:PrimDelta} after the predictor step, but we have not investigated \ref{ass:MuTildeBeta}. Moreover, it is not clear from Lemma \ref{lemma:PrimDeltaOne} that we can pick values for $\alpha$, $\beta$, $\gamma$, and $\eta$ such that $\| \primdelta_\subone \|_{x_\subone} \leq \| \primdelta \|_x$. As we will see, the corrector introduced in this work fixes both these problems.

\subsection{Simple Properties}
We start with some simple properties of the corrector \labelcref{eq:CorrectorDirectionG,eq:CorrectorDirectionTauKappa,eq:CorrectorDirectionXS}.

\begin{lemma}
	\label{lemma:CorrectorProperties}
	Let $K \subset \mathbb{R}^n$ be a proper cone admitting a $\nu$-LHSCB $\primbar$. Pick $z_\subone = (y_\subone, x_\subone, \tau_\subone, s_\subone, \kappa_\subone)$ such that $x_\subone \in \interior K$, $s_\subone \in \interior K^*$, and $\tau_\subone, \kappa_\subone > 0$.
	Then,
	\begin{enumerate}[label=(\roman*)]
		\item \label{item:CorrectorG} $G(z_\subone + \Delta z^\supcor_\subone) = G(z_\subone)$
		\item \label{item:CorrectorOrthogonalXS} $\langle x_\subone, \Delta s^\supcor_\subone \rangle + \langle \Delta x^\supcor_\subone, s_\subone \rangle  = 0$
		\item \label{item:CorrectorOrthogonal} $\langle \Delta x^\supcor_\subone, \Delta s^\supcor_\subone \rangle + \Delta \tau^\supcor_\subone \Delta \kappa^\supcor_\subone = 0$
		\item \label{item:CorrectorComplementarity} $\langle x_\subone + \Delta x^\supcor_\subone, s_\subone + \Delta s^\supcor_\subone \rangle + (\tau_\subone + \Delta \tau^\supcor_\subone) (\kappa_\subone + \Delta \kappa^\supcor_\subone) = \langle x_\subone,s_\subone \rangle + \tau_\subone \kappa_\subone$. In other words, $\mue_\subtwo = \mue_\subone$
		\item \label{item:CorrectorNorm} $\| \Delta x^\supcor_\subone \|_{W_\subone}^2  + (\| \Delta s^\supcor_\subone \|_{W_\subone}^*)^2 \leq \| \primdelta_\subone \|_{W_\subone}^2$
		\item \label{item:CorrectorXSInnerProduct} $| \langle \Delta x^\supcor_\subone, \Delta s^\supcor_\subone \rangle| \leq \frac{1}{2} \| \primdelta_\subone \|_{W_\subone}^2$
		\item \label{item:CorrectorTauKappa} $(\tau_\subone + \Delta \tau^\supcor_\subone) (\kappa_\subone + \Delta \kappa^\supcor_\subone) \geq \tau_\subone \kappa_\subone - \tfrac{1}{2} \| \primdelta_\subone \|_{W_\subone}^2$. If $\tau_\subone \kappa_\subone - \tfrac{1}{2} \| \primdelta_\subone \|_{W_\subone}^2 > 0$, then $\tau_\subtwo, \kappa_\subtwo > 0$.
	\end{enumerate}
\end{lemma}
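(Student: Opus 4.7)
The plan is to dispatch the seven parts in order, using \eqref{eq:CorrectorDirectionG}--\eqref{eq:CorrectorDirectionXS} together with two structural facts about the scaling matrix: $W_\subone$ is symmetric, and it maps $x_\subone \mapsto s_\subone$ and $\tilde{x}_\subone \mapsto \tilde{s}_\subone$, so that $W_\subone \primdelta_\subone = \dualdelta_\subone$ and therefore $\|\dualdelta_\subone\|_{W_\subone}^* = \|\primdelta_\subone\|_{W_\subone}$.

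Parts \ref{item:CorrectorG}--\ref{item:CorrectorComplementarity} are pure algebra. \ref{item:CorrectorG} is immediate from linearity of $G$ and \eqref{eq:CorrectorDirectionG}. For \ref{item:CorrectorOrthogonalXS}, symmetry of $W_\subone$ lets me rewrite $\langle \Delta x^\supcor_\subone, s_\subone\rangle = \langle x_\subone, W_\subone \Delta x^\supcor_\subone\rangle$; adding $\langle x_\subone, \Delta s^\supcor_\subone\rangle$ and applying \eqref{eq:CorrectorDirectionXS} reduces the sum to $\langle x_\subone, \mu_\subone \tilde{s}_\subone - s_\subone\rangle$, which vanishes because $\langle x_\subone, \tilde{s}_\subone\rangle = \nu$ via \eqref{eq:PropertiesLogHomogeneity} while $\langle x_\subone, s_\subone\rangle = \mu_\subone \nu$ by definition of $\mu_\subone$. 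Part \ref{item:CorrectorOrthogonal} copies the skew-symmetry argument of Lemma \ref{lemma:PredictorProperties}\ref{item:PredictorOrthogonal}: pairing $(\Delta y^\supcor_\subone, \Delta x^\supcor_\subone, \Delta \tau^\supcor_\subone)$ against $G(\Delta z^\supcor_\subone) = 0$ annihilates the skew-symmetric part of $G$ and leaves precisely the stated identity. Then \ref{item:CorrectorComplementarity} is expansion, with the mixed linear terms cancelling by \ref{item:CorrectorOrthogonalXS} and \eqref{eq:CorrectorDirectionTauKappa} and the cross-pair cancelling by \ref{item:CorrectorOrthogonal}.

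The substantive step is \ref{item:CorrectorNorm}. Starting from the algebraic identity
\[
(\|W_\subone \Delta x^\supcor_\subone + \Delta s^\supcor_\subone\|_{W_\subone}^*)^2 = \|\Delta x^\supcor_\subone\|_{W_\subone}^2 + (\|\Delta s^\supcor_\subone\|_{W_\subone}^*)^2 + 2\langle \Delta x^\supcor_\subone, \Delta s^\supcor_\subone\rangle,
\]
I substitute $W_\subone \Delta x^\supcor_\subone + \Delta s^\supcor_\subone = -\dualdelta_\subone$ from \eqref{eq:CorrectorDirectionXS} and use the norm identification above to rewrite the left side as $\|\primdelta_\subone\|_{W_\subone}^2$. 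The proof of \ref{item:CorrectorNorm} then reduces to showing $\langle \Delta x^\supcor_\subone, \Delta s^\supcor_\subone\rangle \geq 0$. Combining \ref{item:CorrectorOrthogonal} with \eqref{eq:CorrectorDirectionTauKappa} gives $\Delta \tau^\supcor_\subone \Delta \kappa^\supcor_\subone = -(\kappa_\subone/\tau_\subone)(\Delta \tau^\supcor_\subone)^2 \leq 0$, and hence $\langle \Delta x^\supcor_\subone, \Delta s^\supcor_\subone\rangle = -\Delta \tau^\supcor_\subone \Delta \kappa^\supcor_\subone \ge 0$. Part \ref{item:CorrectorXSInnerProduct} is then the standard symmetric consequence used in Corollary \ref{cor:InnerProductPredictorBound}: expanding $0 \leq (\|W_\subone \Delta x^\supcor_\subone \pm \Delta s^\supcor_\subone\|_{W_\subone}^*)^2$ and invoking \ref{item:CorrectorNorm} bounds $|\langle \Delta x^\supcor_\subone, \Delta s^\supcor_\subone\rangle|$ by $\tfrac{1}{2}\|\primdelta_\subone\|_{W_\subone}^2$.

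For \ref{item:CorrectorTauKappa}, chaining \ref{item:CorrectorComplementarity}, \ref{item:CorrectorOrthogonal}, and \ref{item:CorrectorXSInnerProduct} gives $(\tau_\subone + \Delta \tau^\supcor_\subone)(\kappa_\subone + \Delta \kappa^\supcor_\subone) = \tau_\subone \kappa_\subone - \langle \Delta x^\supcor_\subone, \Delta s^\supcor_\subone\rangle \geq \tau_\subone\kappa_\subone - \tfrac{1}{2}\|\primdelta_\subone\|_{W_\subone}^2$. Individual positivity of $\tau_\subtwo$ and $\kappa_\subtwo$ is handled by the same intermediate-value trick used at the end of Lemma \ref{lemma:PredictorTauKappa}: by \eqref{eq:CorrectorDirectionTauKappa} the coefficient of $t$ in $(\tau_\subone + t\Delta \tau^\supcor_\subone)(\kappa_\subone + t\Delta \kappa^\supcor_\subone)$ vanishes, so the product equals $\tau_\subone \kappa_\subone + t^2 \Delta \tau^\supcor_\subone \Delta \kappa^\supcor_\subone$, which is nonincreasing on $[0,1]$ because $\Delta \tau^\supcor_\subone \Delta \kappa^\supcor_\subone \le 0$, and thus stays above the assumed positive value at $t=1$; a sign change of either factor on $[0,1]$ would contradict this via an intermediate zero. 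The one place to be careful is tracking signs in \ref{item:CorrectorNorm} (where $\langle \Delta x^\supcor_\subone, \Delta s^\supcor_\subone\rangle$ must come out nonnegative rather than nonpositive); beyond that I expect no conceptual obstacles.
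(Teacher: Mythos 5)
Your proof is correct and follows essentially the same route as the paper's: the algebraic reductions for \ref{item:CorrectorG}--\ref{item:CorrectorComplementarity}, the key observation in \ref{item:CorrectorNorm} that $\Delta\tau^\supcor_\subone\Delta\kappa^\supcor_\subone = -(\kappa_\subone/\tau_\subone)(\Delta\tau^\supcor_\subone)^2\le0$ combined with $W_\subone\Delta x^\supcor_\subone+\Delta s^\supcor_\subone=-\dualdelta_\subone$ and $\|\dualdelta_\subone\|_{W_\subone}^*=\|\primdelta_\subone\|_{W_\subone}$, and the Cauchy--Schwarz consequence for \ref{item:CorrectorXSInnerProduct}. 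The only cosmetic departure is in the positivity argument at the end of \ref{item:CorrectorTauKappa}, where you run an intermediate-value argument on $t\mapsto\tau_\subone\kappa_\subone+t^2\Delta\tau^\supcor_\subone\Delta\kappa^\supcor_\subone$ while the paper observes directly that $\Delta\tau^\supcor_\subone<0$ and $\Delta\kappa^\supcor_\subone<0$ simultaneously would contradict \eqref{eq:CorrectorDirectionTauKappa}; both are sound.
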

\begin{proof}
	\ref{item:CorrectorG}: Follows directly from \eqref{eq:CorrectorDirectionG} and the fact that $G$ is a linear operator.
	
	\ref{item:CorrectorOrthogonalXS}: By \eqref{eq:CorrectorDirectionXS} and the fact that $W_\subone x_\subone = s_\subone$,
	\begin{equation*}
		\langle x_\subone, \Delta s^\supcor_\subone \rangle + \langle \Delta x^\supcor_\subone, s_\subone \rangle 
		= \langle x_\subone, \mu_\subone \tilde{s}_\subone - s_\subone - W_\subone \Delta x^\supcor_\subone \rangle + \langle \Delta x^\supcor_\subone, s_\subone \rangle
		= \langle x_\subone, \mu_\subone \tilde{s}_\subone - s_\subone \rangle,
	\end{equation*}
	which is zero because $\langle x_\subone, \tilde{s}_\subone \rangle = \nu$.
	
	\ref{item:CorrectorOrthogonal}: From \eqref{eq:CorrectorDirectionG} and skew-symmetry, it can be seen that
	\begin{equation*}
		0 = \left\langle \begin{bmatrix} \Delta y^\supcor_\subone \\ \Delta x^\supcor_\subone \\ \Delta \tau^\supcor_\subone \end{bmatrix}, G( \Delta z^\supcor_\subone) \right\rangle = \langle \Delta x^\supcor_\subone, \Delta s^\supcor_\subone \rangle + \Delta \tau^\supcor_\subone \Delta \kappa^\supcor_\subone.
	\end{equation*}
	
	\ref{item:CorrectorComplementarity}: A simple expansion shows $\langle x_\subone + \Delta x^\supcor_\subone, s_\subone + \Delta s^\supcor_\subone \rangle + (\tau_\subone + \Delta \tau^\supcor_\subone) (\kappa_\subone + \Delta \kappa^\supcor_\subone)$ equals
	\begin{equation*}
		\langle x_\subone , s_\subone \rangle + \tau_\subone \kappa_\subone + \underbrace{ \langle x_\subone, \Delta s^\supcor_\subone \rangle + \langle \Delta x^\supcor_\subone, s_\subone \rangle}_{= 0 \text{ by \ref{item:CorrectorOrthogonalXS}}} + \underbrace{\tau_\subone \Delta \kappa^\supcor_\subone + \kappa_\subone \Delta \tau^\supcor_\subone}_{=0 \text{ by \eqref{eq:CorrectorDirectionTauKappa}}} + \underbrace{ \langle \Delta x^\supcor_\subone, \Delta s^\supcor_\subone \rangle + \Delta \tau^\supcor_\subone \Delta \kappa^\supcor_\subone}_{=0 \text{ by \ref{item:CorrectorOrthogonal}}}.
	\end{equation*}
	
	\ref{item:CorrectorNorm}: Recall that for any vectors $v$ and $w$ such that $\langle v,w \rangle = 0$, we have $\| v + w \|^2 = \| v \|^2 + \| w\|^2$. By \ref{item:CorrectorOrthogonal}, we have 
	$\langle (W_\subone^{1/2} \Delta x^\supcor_\subone, \Delta \tau^\supcor_\subone ), (W_\subone^{-1/2} \Delta s^\supcor_\subone, \Delta \kappa^\supcor_\subone ) \rangle = 0$, so
	\begin{equation*}
		(\| W_\subone \Delta x^\supcor_\subone + \Delta s^\supcor_\subone \|_{W_\subone}^*)^2 + (\Delta \tau^\supcor_\subone + \Delta \kappa^\supcor_\subone)^2
		= \| \Delta x^\supcor_\subone \|_{W_\subone}^2 + (\| \Delta s^\supcor_\subone \|_{W_\subone}^*)^2 + (\Delta \tau^\supcor_\subone)^2 + (\Delta \kappa^\supcor_\subone)^2.
	\end{equation*}
	Subtracting $(\Delta \tau^\supcor_\subone)^2 + (\Delta \kappa^\supcor_\subone)^2$ from both sides and using $W_\subone \Delta x^\supcor_\subone + \Delta s^\supcor_\subone = - \dualdelta_\subone$, we get
	\begin{equation}
		\label{eq:CorrectorNormIntermediate}
		(\| \dualdelta_\subone \|_{W_\subone}^*)^2 + 2 \Delta \tau^\supcor_\subone \Delta \kappa^\supcor_\subone = \| \Delta x^\supcor_\subone \|_{W_\subone}^2 + (\| \Delta s^\supcor_\subone \|_{W_\subone}^*)^2.
	\end{equation}
	Since $\tau_\subone \Delta \kappa^\supcor_\subone + \kappa_\subone \Delta \tau^\supcor_\subone = 0$ by \eqref{eq:CorrectorDirectionTauKappa}, we can derive the upper bound
	\begin{equation*}
	\Delta \kappa^\supcor_\subone \Delta \tau^\supcor_\subone = \Delta \tau^\supcor_\subone \frac{-\kappa_\subone \Delta \tau^\supcor_\subone}{\tau_\subone} \leq 0.
	\end{equation*}
	Hence, \ref{item:CorrectorNorm} follows from \eqref{eq:CorrectorNormIntermediate} and the observation $(\| \dualdelta_\subone \|_{W_\subone}^*)^2 = \| \primdelta_\subone \|_{W_\subone}^2$.
	
	\ref{item:CorrectorXSInnerProduct}: Can be proven similar to Corollary \ref{cor:InnerProductPredictorBound} using \ref{item:CorrectorNorm}.
	
	\ref{item:CorrectorTauKappa}: From \eqref{eq:CorrectorDirectionTauKappa} and \ref{item:CorrectorOrthogonal}, we get 
	\begin{equation*}
		(\tau_\subone + \Delta \tau^\supcor_\subone) (\kappa_\subone + \Delta \kappa^\supcor_\subone) = \tau_\subone \kappa_\subone + \Delta \tau^\supcor_\subone \Delta \kappa^\supcor_\subone = \tau_\subone \kappa_\subone - \langle \Delta x^\supcor_\subone, \Delta s^\supcor_\subone \rangle.
	\end{equation*}
	The lower bound now follows from \ref{item:CorrectorXSInnerProduct}. If this lower bound on $(\tau_\subone + \Delta \tau^\supcor_\subone) (\kappa_\subone + \Delta \kappa^\supcor_\subone)$ is positive, then either $\tau_\subtwo, \kappa_\subtwo > 0$ or $\tau_\subtwo, \kappa_\subtwo < 0$. Suppose for the sake of contradiction that $\tau_\subtwo, \kappa_\subtwo < 0$. Then, both $\Delta \tau^\supcor_\subone < 0$ and $\Delta \kappa^\supcor_\subone < 0$. However, it follows from \eqref{eq:CorrectorDirectionTauKappa} that these cannot hold at the same time if $\tau_\subone, \kappa_\subone > 0$. Hence, $\tau_\subtwo, \kappa_\subtwo > 0$.
\end{proof}

\subsection{Shadow Inner Product After Correction}
Recall that we did not analyze assumption \ref{ass:MuTildeBeta} in Section \ref{sec:Predictor}. The following result will show that the corrector step not only leads to a small $\| \primdelta_\subtwo \|_{x_\subtwo}$, but also to a small $\mu_\subone \tilde{\mu}_\subtwo$. In Section \ref{sec:Analysis}, we will show that a small $\mu_\subone \tilde{\mu}_\subtwo$ implies a small value of $\mue_\subtwo \tilde{\mu}_\subtwo$.

Similar to \eqref{eq:MaximumSCAidOne}, the following analysis is simplified by noting that if $\lambda \in (0,1]$ and $\epsilon \geq 0$, then
\begin{equation}
	\label{eq:MaximumSCAidTwo}
	\frac{1}{(1-\epsilon)\lambda} - 1 \geq 1 - \frac{\lambda}{1 + \epsilon}.
\end{equation}
We suggest the reader to skip the proof on a first pass.

\begin{lemma}
	\label{lemma:ShadowInnerProduct}
	Let $K \subset \mathbb{R}^n$ be a proper cone admitting a $\nu$-LHSCB $\primbar$. Pick $z_\subone = (y_\subone, x_\subone, \tau_\subone, s_\subone, \kappa_\subone)$ such that $x_\subone \in \interior K$, $s_\subone \in \interior K^*$, and $\tau_\subone, \kappa_\subone > 0$.
	Assume \eqref{eq:ScalingHessianAssumptionOne} holds with $\dualup_\subone = (1+\epsilon) / \lambda$ and $\duallow_\subone = \lambda (1-\epsilon)$ for some $\lambda \in (0,1]$ and $\epsilon \geq 0$. Suppose $\| \primdelta_\subone \|_{W_\subone} / \sqrt{\mu_\subone} < \sqrt{ \min\{\duallow_\subone, 1/\dualup_\subone \} }$.
	Then, $\mu_\subone \tilde{\mu}_\subtwo$ is at most
	\begin{align*}
		&1 + \| \primdelta_\subone \|_{W_\subone} \left[ \frac{\| \tilde{x}_\subone \|_{W_\subone}}{\nu} \left( \frac{1}{\duallow_\subone (1 - \| \primdelta_\subone \|_{W_\subone} / \sqrt{ \duallow_\subone \mu_\subone})} - 1 \right) 
		+ \frac{\dualup_\subone }{\sqrt{\mu_\subone \nu} } \left( \frac{1}{ 1 - \sqrt{\dualup_\subone / \mu_\subone} \| \primdelta_\subone \|_{W_\subone}} - 1 \right) \right]\\
		& + \frac{\| \primdelta_\subone \|_{W_\subone}^2 \dualup_\subone}{\mu_\subone \nu \duallow_\subone (1 - \| \primdelta_\subone \|_{W_\subone} / \sqrt{ \duallow_\subone \mu_\subone}) (1 - \sqrt{\dualup_\subone / \mu_\subone} \| \primdelta_\subone \|_{W_\subone})}.
	\end{align*}
\end{lemma}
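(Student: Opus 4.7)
My plan is to pivot on the identity
\begin{equation*}
	\mu_\subone \tilde{\mu}_\subtwo - 1 = \frac{1}{\nu}\langle \mu_\subone \tilde{x}_\subtwo - x_\subtwo, \tilde{s}_\subtwo\rangle,
\end{equation*}
which I get from $\mu_\subone \tilde{\mu}_\subtwo = (\mu_\subone/\nu)\langle \tilde{x}_\subtwo, \tilde{s}_\subtwo\rangle$ and the log-homogeneity $\langle x_\subtwo, -\primgrad(x_\subtwo)\rangle = \nu$ of \eqref{eq:PropertiesLogHomogeneity}. To simplify the numerator, I would write $\tilde{x}_\subtwo = \tilde{x}_\subone - M\Delta s^\supcor_\subone$, with $M \coloneqq \int_0^1 \dualhess(s_\subone + t\Delta s^\supcor_\subone)\,\diff t$ supplied by the fundamental theorem of calculus applied to $\dualgrad$. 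Combining this with the defining relations $W_\subone x_\subone = s_\subone$ and $W_\subone \tilde{x}_\subone = \tilde{s}_\subone$, which together give $W_\subone \primdelta_\subone = \dualdelta_\subone$, and with the corrector equation \eqref{eq:CorrectorDirectionXS} rearranged as $W_\subone \Delta x^\supcor_\subone = -\dualdelta_\subone - \Delta s^\supcor_\subone$, the $\primdelta_\subone$ contributions should cancel exactly, leaving the clean form
\begin{equation*}
	\mu_\subone \tilde{x}_\subtwo - x_\subtwo = (W_\subone^{-1} - \mu_\subone M)\Delta s^\supcor_\subone.
\end{equation*}

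The problem then reduces to bounding $\langle (W_\subone^{-1} - \mu_\subone M)\Delta s^\supcor_\subone, \tilde{s}_\subtwo\rangle$. To isolate the two pieces of order $\|\primdelta_\subone\|_{W_\subone}$ and the quadratic remainder in the target, I would split $\tilde{s}_\subtwo = \tilde{s}_\subone - \tilde N \Delta x^\supcor_\subone$ with $\tilde N \coloneqq \int_0^1 \primhess(x_\subone + t\Delta x^\supcor_\subone)\,\diff t$. On the $\tilde{s}_\subone$ piece, Cauchy--Schwarz \eqref{eq:CauchySchwarz} in the $W_\subone$-norm together with $\|\tilde{s}_\subone\|_{W_\subone}^* = \|W_\subone \tilde{x}_\subone\|_{W_\subone}^* = \|\tilde{x}_\subone\|_{W_\subone}$ supplies the $\|\tilde{x}_\subone\|_{W_\subone}$ factor of the first term. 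On the $\tilde N \Delta x^\supcor_\subone$ piece, a second Cauchy--Schwarz pairs $\|(W_\subone^{-1} - \mu_\subone M)\Delta s^\supcor_\subone\|_{W_\subone}$ with $\|\tilde N \Delta x^\supcor_\subone\|_{W_\subone}^*$, and I would further bound each factor using the operator norm \eqref{eq:DefOperatorNorm} and Lemma~\ref{lemma:NormToMatrixInequality}.

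To obtain the L\"owner bounds on $\mu_\subone M$ relative to $W_\subone^{-1}$ (and $\mu_\subone \tilde N$ relative to $W_\subone$) needed by Lemma~\ref{lemma:NormToMatrixInequality}, I would integrate the self-concordance estimate $(1-t\|\Delta s^\supcor_\subone\|_{s_\subone})^2 \dualhess(s_\subone) \preceq \dualhess(s_\subone + t\Delta s^\supcor_\subone) \preceq \dualhess(s_\subone)/(1-t\|\Delta s^\supcor_\subone\|_{s_\subone})^2$ over $t$ and then invoke \eqref{eq:ScalingHessianAssumptionOne} to convert the $\dualhess(s_\subone)$ (or $\primhess(x_\subone)$) estimates into $W_\subone^{-1}$ (or $W_\subone$) bounds, which is what injects the $\duallow_\subone, \dualup_\subone$ factors. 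The norm $\|\Delta s^\supcor_\subone\|_{s_\subone}$ itself I would control via the same sandwich as $\sqrt{\dualup_\subone/\mu_\subone}\,\|\Delta s^\supcor_\subone\|_{W_\subone}^*$, and Lemma~\ref{lemma:CorrectorProperties}\ref{item:CorrectorNorm} upgrades $\|\Delta s^\supcor_\subone\|_{W_\subone}^*$ and $\|\Delta x^\supcor_\subone\|_{W_\subone}$ to $\|\primdelta_\subone\|_{W_\subone}$. The assumed form $\dualup_\subone = (1+\epsilon)/\lambda$, $\duallow_\subone = \lambda(1-\epsilon)$, combined with \eqref{eq:MaximumSCAidTwo}, then identifies the active branch of the $\max$ in Lemma~\ref{lemma:NormToMatrixInequality}, and this is what forces the $1/\duallow_\subone$ factor into the first piece and the $\dualup_\subone$ factor into the second.

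The main obstacle I expect is the bookkeeping rather than any new conceptual difficulty: each denominator in the target corresponds to a particular choice of upper versus lower sandwich on $\primhess$, $\dualhess$, or an integrated Hessian, and verifying that the positivity hypothesis $\|\primdelta_\subone\|_{W_\subone}/\sqrt{\mu_\subone} < \sqrt{\min\{\duallow_\subone, 1/\dualup_\subone\}}$ keeps every denominator strictly positive --- and that the $\max$ coming out of Lemma~\ref{lemma:NormToMatrixInequality} resolves to the branch appearing in the statement rather than the other --- is the delicate step that will require careful case analysis.
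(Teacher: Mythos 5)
Your identity $\mu_\subone \tilde{x}_\subtwo - x_\subtwo = (W_\subone^{-1} - \mu_\subone M)\Delta s^\supcor_\subone$ is correct and is a genuinely clean observation, and the cancellation of $\primdelta_\subone$ via $W_\subone \primdelta_\subone = \dualdelta_\subone$ checks out. The gap is in the next step: expanding $\tilde{s}_\subtwo = -\primgrad(x_\subtwo)$ from the base point $x_\subone$ as $\tilde{s}_\subone - \tilde N\Delta x^\supcor_\subone$ forces you to control $\primhess$ along the segment $x_\subone + t\Delta x^\supcor_\subone$, and the sandwich \eqref{eq:ScalingHessianAssumptionOne} converts this to $W_\subone$ only through the \emph{primal} constants $\primlow_\subone, \primup_\subone$. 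Those constants do not appear anywhere in the stated bound, which is written entirely in terms of $\duallow_\subone, \dualup_\subone$. So the bound you would produce is different from the one in the lemma, and since $\primlow_\subone$ need not equal $\duallow_\subone$, your bound neither matches nor is seen to imply the stated one.

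The paper avoids the primal-side constants by using the identity $x_\subtwo = \mu_\subone\tilde{x}_\subone - W_\subone^{-1}\Delta s^\supcor_\subone$ (implicit in your cancellation) to expand $\primgrad(x_\subtwo)$ by the fundamental theorem of calculus from the base point $\mu_\subone\tilde{x}_\subone$ rather than $x_\subone$. Then $\primhess$ is evaluated at $\mu_\subone\tilde{x}_\subone - tW_\subone^{-1}\Delta s^\supcor_\subone$, and since $\mu_\subone^2\primhess(\mu_\subone\tilde{x}_\subone) = \dualhess(s_\subone)^{-1}$ by \eqref{eq:PropertiesConjugate}, this Hessian is sandwiched by $W_\subone$ using only $\duallow_\subone,\dualup_\subone$. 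A second mismatch: your split gives one first-order term, $\langle (W_\subone^{-1}-\mu_\subone M)\Delta s^\supcor_\subone, \tilde{s}_\subone\rangle$, but the target has two first-order pieces. In the paper these arise by adding and subtracting $\tfrac{1}{\mu_\subone}W_\subone\tilde{x}_\subone$ inside the single-integral term, separating a primal-Hessian deviation acting on $\tilde{x}_\subone$ (which produces the $\|\tilde{x}_\subone\|_{W_\subone}/\nu$ factor with the $1/\duallow_\subone$ denominator) from a dual-Hessian deviation acting on $s_\subone$ (which produces the $\sqrt{\nu/\mu_\subone}$ factor with the $\dualup_\subone$ numerator). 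Without switching your primal FTC base point to $\mu_\subone\tilde{x}_\subone$ and performing this additional split, you will not recover the specific constants and term structure of the claimed bound.
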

\begin{proof}
	It follows from \eqref{eq:CorrectorDirectionXS} that $x_\subone + \Delta x^\supcor_\subone = \mu_\subone \tilde{x}_\subone - W_\subone^{-1} \Delta s^\supcor_\subone$. 
	By the fundamental theorem of calculus,
	\begin{align*}
		&\langle \primgrad(x_\subone + \Delta x^\supcor_\subone), \dualgrad(s_\subone + \Delta s^\supcor_\subone) \rangle
		= \langle \primgrad(\mu_\subone \tilde{x}_\subone - W_\subone^{-1} \Delta s^\supcor_\subone), \dualgrad(s_\subone + \Delta s^\supcor_\subone) \rangle\\
		&= \left\langle \primgrad(\mu_\subone \tilde{x}_\subone) - \int_0^1 \primhess(\mu_\subone \tilde{x}_\subone - t W_\subone^{-1} \Delta s^\supcor_\subone) W_\subone^{-1} \Delta s^\supcor_\subone \diff t, \dualgrad(s_\subone) + \int_0^1 \dualhess(s_\subone + t \Delta s^\supcor_\subone) \Delta s^\supcor_\subone \diff t \right\rangle.
	\end{align*}
	Because $\primgrad(\mu_\subone \tilde{x}_\subone) = \frac{1}{\mu_\subone} \primgrad(-\dualgrad(s_\subone)) = - \frac{1}{\mu_\subone} s_\subone$ and $\dualgrad(s_\subone) = -\dualhess(s_\subone) s_\subone$,
	\begin{align}
		&\langle \primgrad(x_\subone + \Delta x^\supcor_\subone), \dualgrad(s_\subone + \Delta s^\supcor_\subone) \rangle \nonumber\\
		&= \frac{\nu}{\mu_\subone} + \int_0^1 \left\langle \Delta s^\supcor_\subone, \left( W_\subone^{-1}  \primhess(\mu_\subone \tilde{x}_\subone - t W_\subone^{-1} \Delta s^\supcor_\subone) \dualhess(s_\subone) - \dualhess(s_\subone + t \Delta s^\supcor_\subone) \frac{1}{\mu_\subone} \right)
		s_\subone  \right\rangle \diff t \label{eq:CorrectorShadowIntermediateOne}\\
		&- \int_0^1 \int_0^1 \langle \Delta s^\supcor_\subone, W_\subone^{-1}  \primhess(\mu_\subone \tilde{x}_\subone - t W_\subone^{-1} \Delta s^\supcor_\subone) \dualhess(s_\subone + r \Delta s^\supcor_\subone) \Delta s^\supcor_\subone \rangle \diff t \diff r. \label{eq:CorrectorShadowIntermediateTwo}
	\end{align}
	Using the Cauchy-Schwartz inequality, it follows that the inner product in \eqref{eq:CorrectorShadowIntermediateOne} is at most
	\begin{equation*}
		\| \Delta s^\supcor_\subone \|_{W_\subone}^* \left\| \left( \primhess(\mu_\subone \tilde{x}_\subone - t W_\subone^{-1} \Delta s^\supcor_\subone) \dualhess(s_\subone) 
		- \frac{1}{\mu_\subone} W_\subone \dualhess(s_\subone + t \Delta s^\supcor_\subone)  \right)
		s_\subone \right\|_{W_\subone}^*,
	\end{equation*}
	where the latter norm can be further bounded by the triangle inequality:
	\begin{align}
		&\left\| \left( \primhess(\mu_\subone \tilde{x}_\subone - t W_\subone^{-1} \Delta s^\supcor_\subone) \dualhess(s_\subone) 
		- \frac{1}{\mu_\subone} W_\subone \dualhess(s_\subone + t \Delta s^\supcor_\subone)  \right)
		s_\subone \right\|_{W_\subone}^* \nonumber\\
		&\leq \frac{1}{\mu_\subone} \left\| \left( \mu_\subone \primhess(\mu_\subone \tilde{x}_\subone - t W_\subone^{-1} \Delta s^\supcor_\subone)
		-  W_\subone \right) \tilde{x}_\subone \right\|_{W_\subone}^* 
		+ \frac{1}{\mu_\subone^2} \left\| \mu_\subone \left( \dualhess(s_\subone + t \Delta s^\supcor_\subone) - \dualhess(s_\subone) \right) s_\subone \right\|_{W_\subone}, \label{eq:CorrectorShadowIntermediateOneSplit}
	\end{align}
	where we have used $\dualhess(s_\subone)s_\subone = -\dualgrad(s_\subone) = \tilde{x}_\subone$.
	
	To bound the first norm in \eqref{eq:CorrectorShadowIntermediateOneSplit} with Lemma \ref{lemma:NormToMatrixInequality}, we first note that by self-concordance and the observation that $\mu_\subone \primhess(\mu_\subone \tilde{x}_\subone) = \dualhess(s_\subone)^{-1} / \mu_\subone$,
	\begin{equation*}
		\mu_\subone \primhess(\mu_\subone \tilde{x}_\subone - t W_\subone^{-1} \Delta s^\supcor_\subone) 
		\preceq \frac{1}{(1 - t \|  W_\subone^{-1} \Delta s^\supcor_\subone \|_{\mu_\subone \tilde{x}_\subone})^2} \mu_\subone \primhess(\mu_\subone \tilde{x}_\subone)
		\preceq \frac{1}{\duallow_\subone (1 - t \|  W_\subone^{-1} \Delta s^\supcor_\subone \|_{\mu_\subone \tilde{x}_\subone})^2} W_\subone,
	\end{equation*}
	where the last step used \eqref{eq:ScalingHessianAssumptionOne}. Similarly,
	\begin{equation*}
		\mu_\subone \primhess(\mu_\subone \tilde{x}_\subone - t W_\subone^{-1} \Delta s^\supcor_\subone) 
		\succeq (1 - t \|  W_\subone^{-1} \Delta s^\supcor_\subone \|_{\mu_\subone \tilde{x}_\subone})^2 \mu_\subone \primhess(\mu_\subone \tilde{x}_\subone)
		\succeq \frac{(1 - t \|  W_\subone^{-1} \Delta s^\supcor_\subone \|_{\mu_\subone \tilde{x}_\subone})^2}{\dualup_\subone} W_\subone,
	\end{equation*}
	such that Lemma \ref{lemma:NormToMatrixInequality} and \eqref{eq:MaximumSCAidTwo} show
	\begin{align}
		\frac{1}{\mu_\subone} \left\| \left( \mu_\subone \primhess(\mu_\subone \tilde{x}_\subone - t W_\subone^{-1} \Delta s^\supcor_\subone) -  W_\subone \right) \tilde{x}_\subone \right\|_{W_\subone}^*
		&\leq \frac{1}{\mu_\subone} \| \tilde{x}_\subone \|_{W_\subone} \left\| \mu_\subone \primhess(\mu_\subone \tilde{x}_\subone - t W_\subone^{-1} \Delta s^\supcor_\subone) -  W_\subone  \right\|_{W_\subone} \nonumber\\
		&\leq \frac{\| \tilde{x}_\subone \|_{W_\subone}}{\mu_\subone} \left( \frac{1}{\duallow_\subone (1 - t \|  W_\subone^{-1} \Delta s^\supcor_\subone \|_{\mu_\subone \tilde{x}_\subone})^2} - 1 \right). \label{eq:CorrectorShadowIntermediateOneSplitOne}
	\end{align}
	
	Continuing with the second norm in \eqref{eq:CorrectorShadowIntermediateOneSplit}, note that by \eqref{eq:ScalingHessianAssumptionOne} and Lemma \ref{lemma:OrderInverseMatrix},
	\begin{align*}
		\mu_\subone [\dualhess(s_\subone + t \Delta s^\supcor_\subone) - \dualhess(s_\subone)] + W_\subone^{-1} 
		&\preceq \left( \frac{1}{(1 - t \| \Delta s^\supcor_\subone \|_{s_\subone})^2} - 1 \right) \mu_\subone \dualhess(s_\subone) + W_\subone^{-1}\\
		&\preceq \left( \frac{\dualup_\subone}{ (1 - t \| \Delta s^\supcor_\subone \|_{s_\subone})^2} - \dualup_\subone + 1 \right) W_\subone^{-1}.
	\end{align*}
	Similarly,
	\begin{align*}
		\mu_\subone [\dualhess(s_\subone + t \Delta s^\supcor_\subone) - \dualhess(s_\subone)] + W_\subone^{-1} 
		&\succeq \left( (1 - t \| \Delta s^\supcor_\subone \|_{s_\subone})^2 - 1 \right) \mu_\subone \dualhess(s_\subone) + W_\subone^{-1}\\
		&\succeq \left( \duallow_\subone (1 - t \| \Delta s^\supcor_\subone \|_{s_\subone})^2 - \duallow_\subone + 1 \right) W_\subone^{-1}.
	\end{align*}
	Hence, by Lemma \ref{lemma:NormToMatrixInequality},
	\begin{align}
		&\frac{1}{\mu_\subone^2} \left\| \mu_\subone \left( \dualhess(s_\subone + t \Delta s^\supcor_\subone) - \dualhess(s_\subone) \right) s_\subone \right\|_{W_\subone} \nonumber\\
		&= \frac{1}{\mu_\subone^2} \left\| \left( \mu_\subone [\dualhess(s_\subone + t \Delta s^\supcor_\subone) - \dualhess(s_\subone)] + W_\subone^{-1} - W_\subone^{-1} \right) s_\subone \right\|_{W_\subone} \nonumber\\
		&\leq \frac{\| s_\subone \|_{W_\subone}^* }{\mu_\subone^2} \left\| \mu_\subone [\dualhess(s_\subone + t \Delta s^\supcor_\subone) - \dualhess(s_\subone)] + W_\subone^{-1} - W_\subone^{-1} \right\|_{W_\subone^{-1}} \nonumber\\
		&\leq \frac{\| s_\subone \|_{W_\subone}^* }{\mu_\subone^2} \max \left\{ \frac{\dualup_\subone}{ (1 - t \| \Delta s^\supcor_\subone \|_{s_\subone})^2} - \dualup_\subone, \duallow_\subone - \duallow_\subone (1 - t \| \Delta s^\supcor_\subone \|_{s_\subone})^2  \right\} \nonumber\\
		&= \frac{\sqrt{\nu} }{\mu_\subone\sqrt{\mu_\subone} } \left( \frac{\dualup_\subone}{ (1 - t \| \Delta s^\supcor_\subone \|_{s_\subone})^2} - \dualup_\subone \right), \label{eq:CorrectorShadowIntermediateOneSplitTwo}
	\end{align}
	since $1/(1-\epsilon)^2 - 1 \geq 1 - (1-\epsilon)^2$ for all $\epsilon \in [0,1)$.
	In summary, the inner product in \eqref{eq:CorrectorShadowIntermediateOne} is, using the bounds \eqref{eq:CorrectorShadowIntermediateOneSplit}, \eqref{eq:CorrectorShadowIntermediateOneSplitOne}, and \eqref{eq:CorrectorShadowIntermediateOneSplitTwo}, at most
	\begin{equation}
		\label{eq:CorrectorShadowIntermediateOneBound}
		\| \Delta s^\supcor_\subone \|_{W_\subone}^* \left[ \frac{\| \tilde{x}_\subone \|_{W_\subone}}{\mu_\subone} \left( \frac{1}{\duallow_\subone (1 - t \|  W_\subone^{-1} \Delta s^\supcor_\subone \|_{\mu_\subone \tilde{x}_\subone})^2} - 1 \right) 
		+ \frac{\sqrt{\nu} }{\mu_\subone\sqrt{\mu_\subone} } \left( \frac{\dualup_\subone}{ (1 - t \| \Delta s^\supcor_\subone \|_{s_\subone})^2} - \dualup_\subone \right) \right].
	\end{equation}
	
	By the Cauchy-Schwarz inequality and the definition of the operator norm \eqref{eq:DefOperatorNorm}, the inner product in \eqref{eq:CorrectorShadowIntermediateTwo} is at most
	\begin{align}
		&\| \Delta s^\supcor_\subone \|_{W_\subone}^* \| \primhess(\mu_\subone \tilde{x}_\subone - t W_\subone^{-1} \Delta s^\supcor_\subone) \dualhess(s_\subone + r \Delta s^\supcor_\subone) \Delta s^\supcor_\subone \|_{W_\subone}^* \nonumber\\
		&\leq \frac{(\| \Delta s^\supcor_\subone \|_{W_\subone}^*)^2}{\mu_\subone^2} 
		\left\| \mu_\subone \primhess(\mu_\subone \tilde{x}_\subone - t W_\subone^{-1} \Delta s^\supcor_\subone) \right\|_{W_{\subone}} 
		\| \mu_\subone \dualhess(s_\subone + r \Delta s^\supcor_\subone) \|_{W_\subone^{-1}}.
		\label{eq:CorrectorShadowIntermediateTwoSplit}
	\end{align}
	The same technique that gave us \eqref{eq:CorrectorShadowIntermediateOneSplitOne} now yields
	\begin{align*}
		\left( \frac{(1 - t \|  W_\subone^{-1} \Delta s^\supcor_\subone \|_{\mu_\subone \tilde{x}_\subone})^2}{\dualup_\subone} + 1 \right) W_\subone 
		&\preceq \mu_\subone \primhess(\mu_\subone \tilde{x}_\subone - t W_\subone^{-1} \Delta s^\supcor_\subone) + W_\subone\\
		&\preceq \left( \frac{1}{\duallow_\subone (1 - t \|  W_\subone^{-1} \Delta s^\supcor_\subone \|_{\mu_\subone \tilde{x}_\subone})^2} +1 \right) W_\subone,
	\end{align*}
	and thus, by Lemma \ref{lemma:NormToMatrixInequality},
	\begin{equation}
		\label{eq:CorrectorShadowIntermediateTwoSplitOne}
		\left\| \mu_\subone \primhess(\mu_\subone \tilde{x}_\subone - t W_\subone^{-1} \Delta s^\supcor_\subone) \right\|_{W_{\subone}} 
		\leq \frac{1}{\duallow_\subone (1 - t \|  W_\subone^{-1} \Delta s^\supcor_\subone \|_{\mu_\subone \tilde{x}_\subone})^2}.
	\end{equation}
	Similarly, our derivation of \eqref{eq:CorrectorShadowIntermediateOneSplitTwo} also implies
	\begin{equation*}
		\left( \duallow_\subone (1 - r \| \Delta s^\supcor_\subone \|_{s_\subone})^2 + 1 \right) W_\subone^{-1}
		\preceq \mu_\subone \dualhess(s_\subone + r \Delta s^\supcor_\subone) + W_\subone^{-1} 
		\preceq \left( \frac{\dualup_\subone}{ (1 - r \| \Delta s^\supcor_\subone \|_{s_\subone})^2} + 1 \right) W_\subone^{-1},
	\end{equation*}
	and therefore Lemma \ref{lemma:NormToMatrixInequality} shows
	\begin{equation}
		\label{eq:CorrectorShadowIntermediateTwoSplitTwo}
		\| \mu_\subone \dualhess(s_\subone + r \Delta s^\supcor_\subone) \|_{W_\subone^{-1}} \leq \frac{\dualup_\subone}{ (1 - r \| \Delta s^\supcor_\subone \|_{s_\subone})^2}.
	\end{equation}
	In summary, the inner product in \eqref{eq:CorrectorShadowIntermediateTwo} is, using the bounds \eqref{eq:CorrectorShadowIntermediateTwoSplit}, \eqref{eq:CorrectorShadowIntermediateTwoSplitOne}, and \eqref{eq:CorrectorShadowIntermediateTwoSplitTwo}, at most
	\begin{equation}
		\label{eq:CorrectorShadowIntermediateTwoBound}
		\frac{(\| \Delta s^\supcor_\subone \|_{W_\subone}^*)^2 \dualup_\subone}{\mu_\subone^2 \duallow_\subone (1 - t \|  W_\subone^{-1} \Delta s^\supcor_\subone \|_{\mu_\subone \tilde{x}_\subone})^2 (1 - r \| \Delta s^\supcor_\subone \|_{s_\subone})^2}.
	\end{equation}
	
	Combining the bounds \eqref{eq:CorrectorShadowIntermediateOneBound} and \eqref{eq:CorrectorShadowIntermediateTwoBound} on the inner products in \eqref{eq:CorrectorShadowIntermediateOne} and \eqref{eq:CorrectorShadowIntermediateTwo} respectively yields
	\begin{align*}
		&\langle \primgrad(x_\subone + \Delta x^\supcor_\subone), \dualgrad(s_\subone + \Delta s^\supcor_\subone) \rangle \nonumber\\
		&\leq \frac{\nu}{\mu_\subone} + \| \Delta s^\supcor_\subone \|_{W_\subone}^* \left[ \frac{\| \tilde{x}_\subone \|_{W_\subone}}{\mu_\subone} \left( \frac{1}{\duallow_\subone (1 - \|  W_\subone^{-1} \Delta s^\supcor_\subone \|_{\mu_\subone \tilde{x}_\subone})} - 1 \right) 
		+ \frac{\sqrt{\nu} }{\mu_\subone\sqrt{\mu_\subone} } \left( \frac{\dualup_\subone}{ (1 - \| \Delta s^\supcor_\subone \|_{s_\subone})} - \dualup_\subone \right) \right]\\
		& + \frac{(\| \Delta s^\supcor_\subone \|_{W_\subone}^*)^2 \dualup_\subone}{\mu_\subone^2 \duallow_\subone (1 - \|  W_\subone^{-1} \Delta s^\supcor_\subone \|_{\mu_\subone \tilde{x}_\subone}) (1 - \| \Delta s^\supcor_\subone \|_{s_\subone})}.
	\end{align*}
	The proof is complete after a multiplication by $\mu_\subone / \nu$ and bounding the remaining norms. Lemma \ref{lemma:CorrectorProperties}\ref{item:CorrectorNorm} shows $\| \Delta s^\supcor_\subone \|_{W_\subone}^* \leq \| \primdelta_\subone \|_{W_\subone}$. Moreover, $\primhess(\mu_\subone \tilde{x}_\subone) = \dualhess(s_\subone)^{-1} / \mu_\subone^2$, so
	\begin{equation*}
		\|  W_\subone^{-1} \Delta s^\supcor_\subone \|_{\mu_\subone \tilde{x}_\subone} = \|  W_\subone^{-1} \Delta s^\supcor_\subone \|_{s_\subone}^* / \mu_\subone 
		\leq \| \Delta s^\supcor_\subone \|_{W_\subone}^* / \sqrt{ \duallow_\subone \mu_\subone} \leq \| \primdelta_\subone \|_{W_\subone} / \sqrt{ \duallow_\subone \mu_\subone},
	\end{equation*}
	by \eqref{eq:ScalingHessianAssumptionOne}. Finally, $\| \Delta s^\supcor_\subone \|_{s_\subone} \leq \sqrt{\dualup_\subone / \mu_\subone} \| \Delta s^\supcor_\subone \|_{W_\subone}^* \leq \sqrt{\dualup_\subone / \mu_\subone} \| \primdelta_\subone \|_{W_\subone}$.
\end{proof}

Note that the bound in the lemma above still depends on $\| \tilde{x}_\subone \|_{W_\subone} / \nu$. Although we chose not to analyze $\langle \tilde{x}_\subone, \tilde{s}_\subone \rangle$, we can still bound this quantity as
\begin{equation}
	\label{eq:CrudeBoundTildeX}
	\frac{\| \tilde{x}_\subone \|_{W_\subone}}{\nu} = \frac{\| \mu_\subone \tilde{x}_\subone - x_\subone + x_\subone \|_{W_\subone}}{\mu_\subone \nu} \leq \frac{\| \primdelta_\subone \|_{W_\subone} + \| x_\subone \|_{W_\subone}}{\mu_\subone \nu} = \frac{\| \primdelta_\subone \|_{W_\subone} }{\mu_\subone \nu} + \frac{1}{\sqrt{\mu_\subone \nu}}.
\end{equation}
Of course, better bounds could be found through a thorough analysis of $\langle \tilde{x}_\subone, \tilde{s}_\subone \rangle$.

\subsection{Shadow Distance After Correction}
The corrector \labelcref{eq:CorrectorDirectionG,eq:CorrectorDirectionTauKappa,eq:CorrectorDirectionXS} was designed with the goal of reducing the distance $\| \primdelta_\subone \|_{x_\subone}$. The following lemma shows that this goal is indeed attained, provided $\| \primdelta_\subone \|_{W_\subone}$ is itself not too large (in which case, $\dualup_\subone$ is close to one).

\begin{lemma}
	\label{lemma:PrimDeltaTwo}
	Let $K \subset \mathbb{R}^n$ be a proper cone admitting a $\nu$-LHSCB $\primbar$. Pick $z_\subone = (y_\subone, x_\subone, \tau_\subone, s_\subone, \kappa_\subone)$ such that $x_\subone \in \interior K$, $s_\subone \in \interior K^*$, and $\tau_\subone, \kappa_\subone > 0$.
	Assume \eqref{eq:ScalingHessianAssumptionOne} holds with $\dualup_\subone = (1+\epsilon) / \lambda$ and $\duallow_\subone = \lambda (1-\epsilon)$ for some $\lambda \in (0,1]$ and $\epsilon \geq 0$. Suppose $\| \primdelta_\subone \|_{W_\subone} / \sqrt{\mu_\subone} < \sqrt{ \min\{\primlow_\subone, 1/\primup_\subone \} }$. Then,
	\begin{align*}
		\| \primdelta_\subtwo \|_{x_\subtwo} 
		&\leq \frac{\| \primdelta_\subone \|_{W_\subone}  }{(1 - \| \primdelta_\subone \|_{W_\subone} / \sqrt{\mu_\subone \primlow_\subone})\sqrt{\mu_\subone \primlow_\subone}} \left( \frac{\dualup_\subone}{1 - \sqrt{\dualup_\subone / \mu_\subone} \| \primdelta_\subone \|_{W_\subone} } - 1 \right)\\
		&+ \frac{\| \primdelta_\subone \|_{W_\subone}^2 }{2 \mu_\subone \sqrt{\nu}} 
		\left[1 - \frac{ \| \primdelta_\subone \|_{W_\subone} }{(1 - \| \primdelta_\subone \|_{W_\subone} / \sqrt{\mu_\subone \primlow_\subone})\sqrt{\mu_\subone \primlow_\subone}} \left( \frac{\dualup_\subone}{1 - \sqrt{\dualup_\subone / \mu_\subone} \| \primdelta_\subone \|_{W_\subone} } - 1 \right) \right]^{-1}.
	\end{align*}
\end{lemma}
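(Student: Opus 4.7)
The plan is to adapt the template from Lemma \ref{lemma:PrimDeltaOne} to the corrector step. The first task is to derive an explicit decomposition of $\primdelta_\subtwo$. From the corrector equation \eqref{eq:CorrectorDirectionXS} combined with the identities $W_\subone x_\subone = s_\subone$ and $W_\subone \tilde{x}_\subone = \tilde{s}_\subone$, one obtains $x_\subtwo = x_\subone + \Delta x^\supcor_\subone = \mu_\subone \tilde{x}_\subone - W_\subone^{-1}\Delta s^\supcor_\subone$. The fundamental theorem of calculus applied to $\dualgrad$ between $s_\subone$ and $s_\subtwo$ gives $\tilde{x}_\subtwo = \tilde{x}_\subone - \int_0^1 \dualhess(s_\subone + t \Delta s^\supcor_\subone)\Delta s^\supcor_\subone \diff t$. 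Substituting into $\primdelta_\subtwo = x_\subtwo - \mu_\subtwo \tilde{x}_\subtwo$ and collecting terms yields
\begin{equation*}
\primdelta_\subtwo = (\mu_\subone - \mu_\subtwo)\tilde{x}_\subtwo + \int_0^1 \bigl[\mu_\subone \dualhess(s_\subone + t \Delta s^\supcor_\subone) - W_\subone^{-1}\bigr] \Delta s^\supcor_\subone \diff t,
\end{equation*}
in which one verifies directly that the integral equals $x_\subtwo - \mu_\subone \tilde{x}_\subtwo$. This identity is what will tie the two displayed terms together.

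The integral term is bounded in the $x_\subtwo$-norm following the chain used in Lemma \ref{lemma:PrimDeltaOne}. A Cauchy--Schwarz estimate via the operator norm \eqref{eq:DefOperatorNorm} factors out $\|\Delta s^\supcor_\subone\|_{W_\subone}^* \leq \|\primdelta_\subone\|_{W_\subone}$ (Lemma \ref{lemma:CorrectorProperties}\ref{item:CorrectorNorm}), leaving $\|\mu_\subone \dualhess(s_\subone + t \Delta s^\supcor_\subone) - W_\subone^{-1}\|_{W_\subone^{-1}}$ to control. Self-concordance of $\dualbar$ together with the dual half of \eqref{eq:ScalingHessianAssumptionOne} sandwiches $\mu_\subone \dualhess(\cdot)$ between scalar multiples of $W_\subone^{-1}$; Lemma \ref{lemma:NormToMatrixInequality} combined with the aid \eqref{eq:MaximumSCAidOne} selects the upper bound. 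Integrating the resulting $(1 - t\|\Delta s^\supcor_\subone\|_{s_\subone})^{-2}$ factor over $[0,1]$ collapses to $(1-\|\Delta s^\supcor_\subone\|_{s_\subone})^{-1}$, and bounding $\|\Delta s^\supcor_\subone\|_{s_\subone} \leq \sqrt{\dualup_\subone/\mu_\subone}\|\primdelta_\subone\|_{W_\subone}$ supplies the dual denominator in the first displayed term. Converting the $W_\subone$-norm to the $x_\subone$-norm divides by $\sqrt{\mu_\subone \primlow_\subone}$, and a closing self-concordance step from $x_\subone$ to $x_\subtwo$ --- valid because $\|\Delta x^\supcor_\subone\|_{x_\subone} \leq \|\primdelta_\subone\|_{W_\subone}/\sqrt{\mu_\subone \primlow_\subone} < 1$ by hypothesis --- introduces the remaining $(1-\|\primdelta_\subone\|_{W_\subone}/\sqrt{\mu_\subone \primlow_\subone})^{-1}$ factor. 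This produces exactly the first displayed term, which I denote $F$.

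For the mean-value term $(\mu_\subone - \mu_\subtwo)\tilde{x}_\subtwo$, Lemma \ref{lemma:CorrectorProperties}\ref{item:CorrectorXSInnerProduct} gives $|\mu_\subone - \mu_\subtwo| = |\langle \Delta x^\supcor_\subone, \Delta s^\supcor_\subone\rangle|/\nu \leq \|\primdelta_\subone\|_{W_\subone}^2/(2\nu)$. To measure $\tilde{x}_\subtwo$ in the $x_\subtwo$-norm, I would use the reference shadow point $\mu_\subone \tilde{x}_\subtwo$: the identity $\primhess(\mu_\subone \tilde{x}_\subtwo) = \primhess(\tilde{x}_\subtwo)/\mu_\subone^2$ combined with $\|\tilde{x}_\subtwo\|_{\primhess(\tilde{x}_\subtwo)}^2 = \nu$ (logarithmic homogeneity of $\dualbar$ at $s_\subtwo$) gives $\|\tilde{x}_\subtwo\|_{\mu_\subone \tilde{x}_\subtwo} = \sqrt{\nu}/\mu_\subone$. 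Since the step $x_\subtwo - \mu_\subone \tilde{x}_\subtwo$ is precisely the integral term analyzed above, self-concordance of $\primbar$ between $\mu_\subone \tilde{x}_\subtwo$ and $x_\subtwo$ yields $\|\tilde{x}_\subtwo\|_{x_\subtwo} \leq (\sqrt{\nu}/\mu_\subone)(1 - F)^{-1}$, provided the step norm at the reference point is also controlled by $F$. Multiplying by $|\mu_\subone - \mu_\subtwo|$ gives the second displayed term, and the triangle inequality combines the two estimates to finish.

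The main obstacle, and the most delicate point, is showing that the $\mu_\subone \tilde{x}_\subtwo$-norm of the integral term also does not exceed $F$ (and not merely $F/(1-F)$, which would be the naive consequence of self-concordance between $x_\subtwo$ and $\mu_\subone \tilde{x}_\subtwo$). The natural remedy is to run the integral-term bound once more, replacing the final self-concordance step from $x_\subone$ to $x_\subtwo$ with one from $x_\subone$ to $\mu_\subone \tilde{x}_\subone$ and then carefully transferring to $\mu_\subone \tilde{x}_\subtwo$ using self-concordance of $\dualbar$; because $\|\primdelta_\subone\|_{x_\subone}$ satisfies the same upper bound $\|\primdelta_\subone\|_{W_\subone}/\sqrt{\mu_\subone \primlow_\subone}$ as $\|\Delta x^\supcor_\subone\|_{x_\subone}$ via \eqref{eq:ScalingHessianAssumptionOne}, the resulting self-concordance factors coincide and both routes yield the same $F$.
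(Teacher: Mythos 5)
Your decomposition and the bounds on the two resulting terms match the paper's proof essentially step for step: the paper also splits $\|\primdelta_\subtwo\|_{x_\subtwo}$ via the triangle inequality into $\|x_\subtwo - \mu_\subone\tilde{x}_\subtwo\|_{x_\subtwo} + |\mu_\subtwo/\mu_\subone - 1|\,\|\mu_\subone\tilde{x}_\subtwo\|_{x_\subtwo}$, bounds the first term through the same FTC / operator-norm / self-concordance chain you describe (with the closing step from $x_\subone$ to $x_\subtwo$), and bounds the second term using Lemma~\ref{lemma:CorrectorProperties}\ref{item:CorrectorXSInnerProduct} together with $\|\mu_\subone\tilde{x}_\subtwo\|_{\mu_\subone\tilde{x}_\subtwo} = \sqrt{\nu}$. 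Writing the algebraic identity $\primdelta_\subtwo = (\mu_\subone - \mu_\subtwo)\tilde{x}_\subtwo + (x_\subtwo - \mu_\subone\tilde{x}_\subtwo)$ out explicitly before applying the triangle inequality is cosmetic, not a different route.

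The ``main obstacle'' you raise in the final paragraph, however, is a phantom. You want to convert $\|\tilde{x}_\subtwo\|_{\mu_\subone\tilde{x}_\subtwo}$ into $\|\tilde{x}_\subtwo\|_{x_\subtwo}$ by a self-concordance factor, and you worry this requires controlling the integral term $x_\subtwo - \mu_\subone\tilde{x}_\subtwo$ in the $\mu_\subone\tilde{x}_\subtwo$-norm, which would naively only give $F/(1-F)$. It does not. Applying \eqref{eq:DefSelfConcordance} with base $x = x_\subtwo$ and $x' = \mu_\subone\tilde{x}_\subtwo$ gives
\begin{equation*}
\primhess(x_\subtwo) \preceq \frac{1}{(1 - \|\mu_\subone\tilde{x}_\subtwo - x_\subtwo\|_{x_\subtwo})^2}\,\primhess(\mu_\subone\tilde{x}_\subtwo),
\end{equation*}
hence
\begin{equation*}
\|\tilde{x}_\subtwo\|_{x_\subtwo} \leq \frac{\|\tilde{x}_\subtwo\|_{\mu_\subone\tilde{x}_\subtwo}}{1 - \|x_\subtwo - \mu_\subone\tilde{x}_\subtwo\|_{x_\subtwo}},
\end{equation*}
and the step norm in this denominator is measured at $x_\subtwo$ --- exactly the quantity your first-term analysis has already bounded by $F$. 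No second pass through the integral bound with a different base point, and no transfer across $\dualbar$'s self-concordance, is needed; the paper simply substitutes $\|x_\subtwo - \mu_\subone\tilde{x}_\subtwo\|_{x_\subtwo} \leq F$ into this denominator and is done.
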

\begin{proof}
	By the triangle inequality, self-concordance, and \eqref{eq:ScalingHessianAssumptionOne},
	\begin{align}
		\| \primdelta_\subtwo \|_{x_\subtwo} = \| x_\subtwo - \mu_\subtwo \tilde{x}_\subtwo \|_{x_\subtwo} 
		&\leq \| x_\subtwo - \mu_\subone \tilde{x}_\subtwo \|_{x_\subtwo} + \left| \frac{\mu_\subtwo}{\mu_\subone} - 1 \right| \| \mu_\subone \tilde{x}_\subtwo \|_{x_\subtwo} \nonumber\\
		&\leq \frac{\| x_\subtwo - \mu_\subone \tilde{x}_\subtwo \|_{x_\subone}}{1 - \| \Delta x^\supcor_\subone \|_{x_\subone}} + \left| \frac{\mu_\subtwo}{\mu_\subone} - 1 \right| \frac{  \| \mu_\subone \tilde{x}_\subtwo \|_{\mu_\subone \tilde{x}_\subtwo}}{1 - \| x_\subtwo - \mu_\subone \tilde{x}_\subtwo \|_{x_\subtwo}} \nonumber\\
		&\leq \frac{\| x_\subtwo - \mu_\subone \tilde{x}_\subtwo \|_{W_\subone}}{(1 - \| \Delta x^\supcor_\subone \|_{x_\subone})\sqrt{\mu_\subone \primlow_\subone}} + \left| \frac{\mu_\subtwo}{\mu_\subone} - 1 \right| \frac{  \sqrt{\nu} }{1 - \frac{\| x_\subtwo - \mu_\subone \tilde{x}_\subtwo \|_{W_\subone}}{(1 - \| \Delta x^\supcor_\subone \|_{x_\subone})\sqrt{\mu_\subone \primlow_\subone}} }. \label{eq:CorrectorPrimDeltaIntermediate}
	\end{align}
	We continue by bounding $\| x_\subtwo - \mu_\subone \tilde{x}_\subtwo \|_{W_\subone}$. Note that by the fundamental theorem of calculus and \eqref{eq:CorrectorDirectionXS}, $x_\subtwo - \mu_\subone \tilde{x}_\subtwo$ equals
	\begin{align}
		(x_\subone + \Delta x^\supcor_\subone) - \mu_\subone (-\dualgrad(s_\subone + \Delta s^\supcor_\subone)) 
		&= x_\subone + \Delta x^\supcor_\subone + \mu_\subone \left( \dualgrad(s_\subone) + \int_0^1 \dualhess(s_\subone + t \Delta s^\supcor_\subone) \Delta s^\supcor_\subone \diff t \right) \nonumber\\
		&= \mu_\subone \tilde{x}_\subone - W_\subone^{-1} \Delta s^\supcor_\subone - \mu_\subone \tilde{x}_\subone + \int_0^1 \mu_\subone \dualhess(s_\subone + t \Delta s^\supcor_\subone) \Delta s^\supcor_\subone \diff t \nonumber\\
		&= \int_0^1 \left( \mu_\subone \dualhess(s_\subone + t \Delta s^\supcor_\subone) - W_\subone^{-1} \right) \Delta s^\supcor_\subone \diff t. \label{eq:CorrectorPrimDeltaTermOne}
	\end{align}
	To bound this expression in the norm induced by $W_\subone$, we will use the operator norm \eqref{eq:DefOperatorNorm}. 
	\begin{align}
		\left\| \left( \mu_\subone \dualhess(s_\subone + t \Delta s^\supcor_\subone) - W_\subone^{-1} \right) \Delta s^\supcor_\subone \right\|_{W_\subone}
		&\leq \| \Delta s^\supcor_\subone \|_{W_\subone}^* \sup_{\| u \|_{W_\subone}^* \leq 1} \left\| \left( \mu_\subone \dualhess(s_\subone + t \Delta s^\supcor_\subone) - W_\subone^{-1} \right) u \right\|_{W_\subone} \nonumber\\
		&= \| \Delta s^\supcor_\subone \|_{W_\subone}^* \left\| \mu_\subone \dualhess(s_\subone + t \Delta s^\supcor_\subone) - W_\subone^{-1} \right\|_{W_\subone^{-1}}.
		\label{eq:CorrectorPrimDeltaTermOneBound}
	\end{align}
	By self-concordance, \eqref{eq:ScalingHessianAssumptionOne}, and Lemma \ref{lemma:OrderInverseMatrix},
	\begin{equation*}
		\mu_\subone \dualhess( s_\subone + t \Delta s^\supcor_\subone) \preceq \frac{1}{(1 - t \| \Delta s^\supcor_\subone \|_{s_\subone} )^2} \mu_\subone \dualhess( s_\subone ) \preceq \frac{\dualup_\subone}{(1 - t \| \Delta s^\supcor_\subone \|_{s_\subone} )^2} W_\subone^{-1}.
	\end{equation*}
	Similarly,
	\begin{equation*}
		\mu_\subone \dualhess( s_\subone + t \Delta s^\supcor_\subone) \succeq (1 - t \| \Delta s^\supcor_\subone \|_{s_\subone} )^2 \mu_\subone \dualhess( s_\subone ) \succeq (1 - t \| \Delta s^\supcor_\subone \|_{s_\subone} )^2 \duallow_\subone W_\subone^{-1}.
	\end{equation*}
	It then follows from Lemma \ref{lemma:NormToMatrixInequality} that
	\begin{align*}
		\left\| \mu_\subone \dualhess(s_\subone + t \Delta s^\supcor_\subone) - W_\subone^{-1} \right\|_{W_\subone^{-1}}
		&\leq \max \left\{ \frac{\dualup_\subone}{(1 - t \| \Delta s^\supcor_\subone \|_{s_\subone} )^2} - 1, 1 - (1 - t \| \Delta s^\supcor_\subone \|_{s_\subone} )^2 \duallow_\subone \right\}\\
		&= \frac{\dualup_\subone}{(1 - t \| \Delta s^\supcor_\subone \|_{s_\subone} )^2} - 1,
	\end{align*}
	where the equality is due to the form of $\duallow_\subone$ and $\dualup_\subone$, and \eqref{eq:MaximumSCAidOne}. Hence, it follows from \eqref{eq:CorrectorPrimDeltaTermOne} and \eqref{eq:CorrectorPrimDeltaTermOneBound} that
	\begin{align}
		\| (x_\subone + \Delta x^\supcor_\subone) - \mu_\subone (-\dualgrad(s_\subone + \Delta s^\supcor_\subone))  \|_{W_\subone}
		&\leq \int_0^1 \| \Delta s^\supcor_\subone \|_{W_\subone}^* \left\| \mu_\subone \dualhess(s_\subone + t \Delta s^\supcor_\subone) - W_\subone^{-1} \right\|_{W_\subone^{-1}} \diff t \nonumber\\
		&\leq \int_0^1 \| \Delta s^\supcor_\subone \|_{W_\subone}^* \left( \frac{\dualup_\subone}{(1 - t \| \Delta s^\supcor_\subone \|_{s_\subone} )^2} - 1 \right) \diff t \nonumber\\
		&= \| \Delta s^\supcor_\subone \|_{W_\subone}^* \left( \frac{\dualup_\subone}{1 - \| \Delta s^\supcor_\subone \|_{s_\subone}} - 1 \right) \nonumber\\
		&\leq \| \primdelta_\subone \|_{W_\subone} \left( \frac{\dualup_\subone}{1 - \sqrt{\dualup_\subone / \mu_\subone} \| \primdelta_\subone \|_{W_\subone} } - 1 \right), \label{eq:CorrectorPrimDeltaIntermediateBoundOne}
	\end{align}
	where the final inequality uses Lemma \ref{lemma:CorrectorProperties}\ref{item:CorrectorNorm} and \eqref{eq:ScalingHessianAssumptionOne}.
	
	The next factor from \eqref{eq:CorrectorPrimDeltaIntermediate} we look at is $| \mu_\subtwo / \mu_\subone - 1|$. Lemma \ref{lemma:CorrectorProperties}\ref{item:CorrectorOrthogonalXS} implies that $\langle x_\subone + \Delta x^\supcor_\subone, s_\subone + \Delta s^\supcor_\subone \rangle = \langle x_\subone, s_\subone \rangle + \langle \Delta x^\supcor_\subone, \Delta s^\supcor_\subone \rangle$. 
	Therefore, it follows from Lemma \ref{lemma:CorrectorProperties}\ref{item:CorrectorXSInnerProduct} that
	\begin{equation}
		\label{eq:CorrectorPrimDeltaIntermediateBoundTwo}
		\left| \frac{\mu_\subtwo}{\mu_\subone} - 1 \right| = \frac{|\langle \Delta x^\supcor_\subone, \Delta s^\supcor_\subone \rangle |}{\langle x_\subone, s_\subone \rangle } \leq \frac{\| \primdelta_\subone \|_{W_\subone}^2}{2 \mu_\subone \nu}.
	\end{equation}
	
	The last quantity in \eqref{eq:CorrectorPrimDeltaIntermediate} that we have yet to bound is $\| \Delta x^\supcor_\subone \|_{x_\subone}$. By \eqref{eq:ScalingHessianAssumptionOne} and Lemma \ref{lemma:CorrectorProperties}\ref{item:CorrectorNorm},
	\begin{equation}
		\label{eq:CorrectorPrimDeltaIntermediateBoundThree}
		\| \Delta x^\supcor_\subone \|_{x_\subone} \leq \frac{\| \Delta x^\supcor_\subone \|_{W_\subone}}{\sqrt{\mu_\subone \primlow_\subone}} \leq \frac{\| \primdelta_\subone \|_{W_\subone}}{\sqrt{\mu_\subone \primlow_\subone}}.
	\end{equation}
	
	The proof can be completed by using \eqref{eq:CorrectorPrimDeltaIntermediateBoundOne}, \eqref{eq:CorrectorPrimDeltaIntermediateBoundTwo}, and \eqref{eq:CorrectorPrimDeltaIntermediateBoundThree} to develop \eqref{eq:CorrectorPrimDeltaIntermediate}.
\end{proof}

\section{Complexity Analysis}
\label{sec:Analysis}
In the previous two sections, we saw how the predictor and corrector step impacted the starting assumptions \labelcref{ass:XSinCone,ass:TauKappaPositive,ass:TauKappaBeta,ass:MuTildeBeta,ass:PrimDelta} for general values of the parameter $\alpha$, $\beta$, $\gamma$, and $\eta$. 
In the remainder of this work, we fix
\begin{equation}
	\label{eq:ParameterValues}
	\alpha = \frac{1}{100 \nu}, \qquad \beta = 0.9, \qquad \gamma = 0.9, \qquad \eta = \frac{1}{400 \sqrt{\nu}}.
\end{equation}
We will show that for these parameter values, the point $z_\subtwo$ satisfies \labelcref{ass:XSinCone,ass:TauKappaPositive,ass:TauKappaBeta,ass:MuTildeBeta,ass:PrimDelta} if $z$ satisfies them, and that some progress is made in moving from $z$ to $z_\subtwo$.

We note that for the values in \eqref{eq:ParameterValues}, it follows from \eqref{eq:DefNormub} and Theorem \ref{thm:ScalingBounds} that we have
\begin{equation*}
	\normub = 0.1 (\nu+1), \qquad \primlow \geq 0.97966, \qquad \dualup \leq 1.02546.
\end{equation*}
Moreover, Lemma \ref{lemma:PrimDeltaOne} and Theorem \ref{thm:ScalingBounds} show that we have
\begin{equation*}
	\| \primdelta_\subone \|_{x_\subone} \leq \frac{0.00747}{\sqrt{\nu}}, \qquad \primlow_\subone \geq 0.93719, \qquad \duallow_\subone \geq 0.92326, \qquad \primup_\subone \leq 1.06281, \qquad \dualup_\subone \leq 1.07885.
\end{equation*}

\subsection{Cone Membership}
Assumption \ref{ass:XSinCone} states that $x \in \interior K$ and $s \in \interior K^*$. By Theorem \ref{thm:PredictorNorms}, $\| \alpha \Delta x^\suppred \|_x \leq \alpha \sqrt{\theta/(\beta \primlow)}$, which, for the parameter values in \eqref{eq:ParameterValues} and $\nu \geq 1$, is at most $0.01 \sqrt{0.1(\nu+1)/0.88170}/ \nu \leq 0.00477 < 1$. Hence, $x_\subone \in \interior K$.
Similarly, Theorem \ref{thm:PredictorNorms} shows $\| \alpha \Delta s^\suppred \|_s \leq \alpha \sqrt{\theta \dualup/\beta } \leq 0.01 \sqrt{0.1(\nu+1)1.1394}/ \nu \leq 0.00478 < 1$, implying that $s_\subone \in \interior K^*$.

Next, we consider the corrector step. It follows from \eqref{eq:ScalingHessianAssumptionOne} and Lemma \ref{lemma:CorrectorProperties}\ref{item:CorrectorNorm} that
\begin{equation*}
	\primlow_\subone \mu_\subone \| \Delta x^\supcor_\subone \|_{x_\subone}^2  + \frac{\mu_\subone}{\dualup_\subone} \| \Delta s^\supcor_\subone \|_{s_\subone}^2 
	\leq \| \Delta x^\supcor_\subone \|_{W_\subone}^2  + (\| \Delta s^\supcor_\subone \|_{W_\subone}^*)^2
	\leq \| \primdelta_\subone \|_{W_\subone}^2
	\leq \primup_\subone \mu_\subone \| \primdelta_\subone \|_{x_\subone}^2
\end{equation*}
Hence, $\| \Delta x^\supcor_\subone \|_{x_\subone} \leq \| \primdelta_\subone \|_{x_\subone} \sqrt{\primup_\subone / \primlow_\subone} \leq 0.00796 < 1$ and $\| \Delta s^\supcor_\subone \|_{s_\subone} \leq \| \primdelta_\subone \|_{x_\subone} \sqrt{ \primup_\subone \dualup_\subone } \leq 0.00800 < 1$. In conclusion, $x_\subtwo \in \interior K$ and $s_\subtwo \in \interior K^*$.

\subsection{Tau and Kappa Relation to Complementarity}
We continue by verifying \ref{ass:TauKappaBeta} for $z_\subtwo$. By Lemma \ref{lemma:PredictorTauKappa} and Lemma \ref{lemma:CorrectorProperties}\ref{item:CorrectorTauKappa},
\begin{equation}
	\label{eq:TauKappaLowerBoundIntermediate}
	\tau_\subtwo \kappa_\subtwo \geq \frac{\mue_\subone}{1 - \alpha(1-\gamma)} \left( \beta (1-\alpha) + \alpha \gamma - \tfrac{1}{2} \alpha^2 \normub \right) - \tfrac{1}{2} \| \primdelta_\subone \|_{W_\subone}^2.
\end{equation}
Since $\mue_\subone = \mue_\subtwo$ by Lemma \ref{lemma:CorrectorProperties}\ref{item:CorrectorComplementarity}, our next step is to bound $\| \primdelta_\subone \|_{W_\subone}^2$ in terms of $\mue_\subtwo$.
To this end, let $\omega_1$ be the upper bound on $\mu_\subone/\mu$ from Corollary \ref{cor:MuOneOverMu}. For the parameter values \eqref{eq:ParameterValues}, we have
\begin{equation*}
	\frac{\mu_\subone}{\mu} \leq \omega_1 \equiv  1 + \frac{\alpha^2 \normub}{2 \beta \nu} + \alpha \left(  \frac{\gamma }{\beta} - 1 \right) = 1 + \frac{\nu+1}{180000 \nu^3} \leq 1 + \frac{1}{90000}.
\end{equation*}
By \eqref{eq:ScalingHessianAssumptionOne}, Lemma \ref{lemma:MuMueBounds}, and
Lemma \ref{lemma:PredictorProperties}\ref{item:PredictorComplementarity},
\begin{equation}
	\label{eq:TauKappaLowerBoundIntermediateTwo}
	\| \primdelta_\subone \|_{W_\subone}^2
	\leq \primup_\subone \mu_\subone \| \primdelta_\subone \|_{x_\subone}^2
	\leq \primup_\subone \mu \omega_1 \| \primdelta_\subone \|_{x_\subone}^2
	\leq \primup_\subone (2-\beta) \mue \omega_1 \| \primdelta_\subone \|_{x_\subone}^2
	= \frac{\primup_\subone (2-\beta)  \omega_1}{1 - \alpha(1-\gamma)} \mue_\subone \| \primdelta_\subone \|_{x_\subone}^2.
\end{equation}
Since for all $\nu \geq 1$,
\begin{equation*}
	\frac{0.1(\nu+1)}{20000 \nu} + \frac{1.1 \primup_\subone \omega_1 \| \primdelta_\subone \|_{x_\subone}^2}{2} \nu 
	\leq \frac{1}{10^5} + 0.55 \times 1.06281 \times \frac{90001}{90000} \times \left(  \frac{0.00747}{\sqrt{\nu}} \right)^2 \nu < \frac{0.09}{100},
\end{equation*}
we have 
\begin{equation}
	\label{eq:TauKappaLowerBoundIntermediateThree}
	\tfrac{1}{2} \alpha^2 \normub + \tfrac{1}{2} \primup_\subone (2-\beta) \omega_1 \| \primdelta_\subone \|_{x_\subone}^2 \leq (1-\beta) \alpha \gamma. 
\end{equation}
Thus, plugging \labelcref{eq:TauKappaLowerBoundIntermediateTwo,eq:TauKappaLowerBoundIntermediateThree} into \eqref{eq:TauKappaLowerBoundIntermediate}, we see that
\begin{align*}
	\tau_\subtwo \kappa_\subtwo 
	&\geq \frac{\beta (1-\alpha) + \alpha \gamma - \tfrac{1}{2} \alpha^2 \normub - \tfrac{1}{2} \primup_\subone (2-\beta) \omega_1 \| \primdelta_\subone \|_{x_\subone}^2}{1 - \alpha(1-\gamma)} \mue_\subtwo\\
	&\geq \frac{\beta (1-\alpha) + \alpha \gamma - (1-\beta) \alpha \gamma}{1 - \alpha(1-\gamma)} \mue_\subtwo = \beta \mue_\subtwo,
\end{align*}
which shows that \ref{ass:TauKappaBeta} is satisfied for $z_\subtwo$.

\subsection{Tau and Kappa Positive}
To verify that \ref{ass:TauKappaPositive} holds for $z_\subtwo$, we first turn to Lemma \ref{lemma:PredictorTauKappa}. It is not hard to verify that for the values in \eqref{eq:ParameterValues},
\begin{equation*}
	\frac{\gamma - \beta + \sqrt{(\beta-\gamma)^2 + 2 \beta \normub}}{\normub} = \sqrt{\frac{2 \beta}{\theta}} = \sqrt{\frac{18}{\nu+1}} > \frac{1}{100 \nu} = \alpha,
\end{equation*}
so $\tau_\subone > 0$ and $\kappa_\subone > 0$. Lemma \ref{lemma:CorrectorProperties}\ref{item:CorrectorTauKappa} shows that $\tau_\subtwo > 0$ and $\kappa_\subtwo > 0$ as well, since it was shown above that $\tau_\subtwo \kappa_\subtwo \geq \beta \mue_\subtwo > 0$.

\subsection{Shadow Inner Product}
The next assumption we look at is \ref{ass:MuTildeBeta}. Lemma \ref{lemma:ShadowInnerProduct} gives us an upper bound on $\mu_\subone \tilde{\mu}_\subtwo$, while we want an upper bound on $\beta \mue_\subtwo \tilde{\mu}_\subtwo$. Note that by Lemmas \ref{lemma:CorrectorProperties}\ref{item:CorrectorComplementarity} and \ref{lemma:PredictorProperties}\ref{item:PredictorComplementarity}
\begin{equation}
	\label{eq:ShadowProductBoundIntermediate}
	\beta \mue_\subtwo \tilde{\mu}_\subtwo 
	= \beta (1- \alpha(1-\gamma)) \mue \tilde{\mu}_\subtwo 
	= \beta (1- \alpha(1-\gamma)) \frac{\mue}{\mu_\subone} \mu_\subone \tilde{\mu}_\subtwo. 
\end{equation}
To find an upper bound on $\mue / \mu_\subone$, note that by definition,
\begin{equation*}
	\frac{\mue}{\mu_\subone} 
	= \frac{\mue \nu}{ \langle x + \alpha \Delta x^\suppred, s + \alpha \Delta s^\suppred \rangle }
	= \frac{\mue \nu}{ \mu \nu + \alpha \langle x, W \Delta x^\suppred + \Delta s^\suppred \rangle + \alpha^2 \langle \Delta x^\suppred, \Delta s^\suppred \rangle }
\end{equation*}
It follows from \labelcref{eq:AffineDirectionXS,eq:CenteringDirectionXS,eq:DefPredictor} that $W \Delta x^\suppred + \Delta s^\suppred = - s + \gamma \mue \tilde{s}$. Hence, by Corollary \ref{cor:InnerProductPredictorBound} and Lemma \ref{lemma:MuMueBounds},
\begin{equation}
	\label{eq:ShadowProductBoundIntermediateTwo}
	\frac{\mue}{\mu_\subone} 
	\leq \frac{\mue \nu}{ \mu \nu + \alpha (-\mu \nu + \gamma \mue \nu ) - \tfrac{1}{2} \alpha^2 \mue \theta } 
	\leq \frac{\mue \nu}{ (1-\alpha) \beta \mue \nu + \alpha \gamma \mue \nu  - \tfrac{1}{2} \alpha^2 \mue \theta }
	= \frac{1}{ (1-\alpha) \beta + \alpha \gamma  - \tfrac{1}{2} \alpha^2 \theta/ \nu }.
\end{equation}
Next, note that $\| \primdelta_\subone \|_{W_\subone} \leq \sqrt{\primup_\subone \mu_\subone} \| \primdelta_\subone \|_{x_\subone} \leq 0.00771 \sqrt{\mu_\subone / \nu} $ by \eqref{eq:ScalingHessianAssumptionOne}.
Consequently, by Lemma \ref{lemma:ShadowInnerProduct} and \eqref{eq:CrudeBoundTildeX},
\begin{equation}
	\label{eq:ShadowProductBoundIntermediateThree}
	\beta (1-\alpha(1-\gamma)) ( \mu_\subone \tilde{\mu}_\subtwo - 1) \leq \frac{0.00077}{\nu} 
	\leq \frac{0.9}{1000 \nu} - \frac{\nu+1}{200 000 \nu^3} 
	= \alpha \gamma (1-\beta) - \frac{\alpha^2 \theta}{2 \nu}.
\end{equation}
Plugging \labelcref{eq:ShadowProductBoundIntermediateTwo,eq:ShadowProductBoundIntermediateThree} into \eqref{eq:ShadowProductBoundIntermediate} yields
\begin{equation*}
	\beta \mue_\subtwo \tilde{\mu}_\subtwo 
	\leq \frac{\beta(1- \alpha(1-\gamma))}{(1-\alpha) \beta + \alpha \gamma  - \tfrac{1}{2} \alpha^2 \theta/ \nu} \mu_\subone \tilde{\mu}_\subtwo
	\leq \frac{\beta (1- \alpha(1-\gamma)) + \alpha \gamma (1-\beta) - \tfrac{1}{2} \alpha^2 \theta/ \nu}{(1-\alpha) \beta + \alpha \gamma  - \tfrac{1}{2} \alpha^2 \theta/ \nu} = 1.
\end{equation*}

\subsection{Shadow Distance}
The final assumption to consider is \ref{ass:PrimDelta}. It follows from Lemma \ref{lemma:PrimDeltaTwo} that for the parameter values in \eqref{eq:ParameterValues},
\begin{equation*}
	\| \primdelta_\subtwo \|_{x_\subtwo} \leq \frac{0.00074}{\sqrt{\nu}} < \frac{0.0025}{\sqrt{\nu}} = \frac{1}{400 \sqrt{\nu}} = \eta.
\end{equation*}

\subsection{Complexity Result}
The preceding sections demonstrated that for the parameter values \eqref{eq:ParameterValues}, an iteration of Algorithm \ref{alg:MOSEKNonSymmetric} starting at a solution $z$ satisfying \labelcref{ass:XSinCone,ass:TauKappaPositive,ass:TauKappaBeta,ass:MuTildeBeta,ass:PrimDelta} ends at a solution $z_\subtwo$ for which \labelcref{ass:XSinCone,ass:TauKappaPositive,ass:TauKappaBeta,ass:MuTildeBeta,ass:PrimDelta} also hold. By Lemmas \ref{lemma:PredictorProperties}\ref{item:PredictorG}, \ref{lemma:PredictorProperties}\ref{item:PredictorComplementarity}, \ref{lemma:CorrectorProperties}\ref{item:CorrectorG}, and \ref{lemma:CorrectorProperties}\ref{item:CorrectorComplementarity}, the residuals and complementarity reduce at a rate of $1-\alpha(1-\gamma)$ per iteration. 
If we start at an initial point $\zinit$ as in \eqref{eq:DefInitialPoint}, then $\mue = 1$ and \labelcref{ass:XSinCone,ass:TauKappaPositive,ass:TauKappaBeta,ass:MuTildeBeta,ass:PrimDelta} all hold. Thus, after $k$ iterations of Algorithm \ref{alg:MOSEKNonSymmetric}, the complementarity gap is exactly $(1 - \alpha(1-\gamma))^k$, and the residuals are $G(\zinit) (1 - \alpha(1-\gamma))^k$. This gives rise to our main result.

\begin{theorem}
	\label{thm:ComplexityResult}
	Let $K \subset \mathbb{R}^n$ be a proper cone admitting a $\nu$-LHSCB. Assume we start Algorithm \ref{alg:MOSEKNonSymmetric} from a starting point satisfying \eqref{eq:DefInitialPoint}, with parameter values as in \eqref{eq:ParameterValues}. For any $\varepsilon \in (0,1)$, the algorithm produces a solution $z$ satisfying
	\begin{equation*}
		\mue \leq \varepsilon \qquad \text{and} \qquad \| G(z) \| \leq \varepsilon \| G(\zinit) \|,
	\end{equation*}
	in $O(\nu \log(1/\varepsilon))$ iterations.
\end{theorem}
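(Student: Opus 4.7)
The plan is to combine the invariance of the five starting assumptions (already established in Sections 7.1--7.5 for the parameter choices \eqref{eq:ParameterValues}) with the exact per-iteration reduction factor for the residuals and the extended complementarity $\mue$.

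First I would verify that the initial point $\zinit$ in \eqref{eq:DefInitialPoint} satisfies \labelcref{ass:XSinCone,ass:TauKappaPositive,ass:TauKappaBeta,ass:MuTildeBeta,ass:PrimDelta} with $\beta = 0.9$ and $\eta = 1/(400\sqrt{\nu})$. Here \ref{ass:XSinCone} and \ref{ass:TauKappaPositive} are immediate. Since $\langle \xinit, \sinit \rangle = \nu$ and $\tauinit \kappainit = 1$, one has $\mue = 1$. Because $\xinit = -\primgrad(\xinit)$ one has $\tilde{x} = \xinit$ and $\tilde{s} = \sinit$, so $\primdelta = 0$ (giving \ref{ass:PrimDelta}) and $\tilde{\mu} = \mu = 1$, while $\muetilde = 1$ and $\tau \kappa = 1 = \mue$, so both \ref{ass:TauKappaBeta} and \ref{ass:MuTildeBeta} hold with $\beta = 0.9$.

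Next I would argue by induction on the iteration count $k$: the contents of Sections 7.1--7.5 show that one full predictor--corrector iteration starting from a point satisfying \labelcref{ass:XSinCone,ass:TauKappaPositive,ass:TauKappaBeta,ass:MuTildeBeta,ass:PrimDelta} produces a new point $z_\subtwo$ satisfying the same assumptions. Thus the assumptions persist along the entire sequence of iterates, which in particular guarantees that the algorithm is well-defined (scaling matrices and search directions exist and behave as analyzed) at every step.

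The core of the complexity argument is then the identity that the residual and $\mue$ both contract by exactly $1 - \alpha(1-\gamma)$ per iteration. Applying Lemma \ref{lemma:PredictorProperties}\ref{item:PredictorG} and \ref{item:PredictorComplementarity} after the predictor, followed by Lemma \ref{lemma:CorrectorProperties}\ref{item:CorrectorG} and \ref{item:CorrectorComplementarity} after the corrector, one obtains
\begin{equation*}
	G(z_\subtwo) = (1 - \alpha(1-\gamma)) G(z), \qquad \mue_\subtwo = (1 - \alpha(1-\gamma)) \mue.
\end{equation*}
Iterating $k$ times from $\zinit$ (where $\mue = 1$) yields $\mue = (1-\alpha(1-\gamma))^k$ and $\|G(z)\| = (1-\alpha(1-\gamma))^k \|G(\zinit)\|$.

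Finally I would translate this into an iteration count. With the parameter choices \eqref{eq:ParameterValues} we have $\alpha(1-\gamma) = 1/(1000\nu)$, so the standard inequality $\log(1 - x) \leq -x$ for $x \in (0,1)$ gives
\begin{equation*}
	-\log(1 - \alpha(1-\gamma)) \geq \frac{1}{1000 \nu}.
\end{equation*}
Thus $(1-\alpha(1-\gamma))^k \leq \varepsilon$ whenever $k \geq 1000 \nu \log(1/\varepsilon)$, which yields the claimed $O(\nu \log(1/\varepsilon))$ bound on the number of iterations needed to achieve both $\mue \leq \varepsilon$ and $\|G(z)\| \leq \varepsilon \|G(\zinit)\|$. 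No step is really an obstacle here; the work was done in the preceding sections, and this theorem is the clean bookkeeping that assembles those pieces.
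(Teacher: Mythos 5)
Your proposal is correct and follows essentially the same route as the paper: verify the initial point satisfies the five assumptions, invoke the invariance established in Sections 7.1--7.5, use Lemmas \ref{lemma:PredictorProperties} and \ref{lemma:CorrectorProperties} to get the exact per-iteration contraction factor $1-\alpha(1-\gamma)$, and conclude. The only thing you add beyond the paper's terse wrap-up is the explicit arithmetic $\alpha(1-\gamma)=1/(1000\nu)$ and the $\log(1-x)\leq -x$ step, which is the standard bookkeeping the paper leaves implicit.
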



\section{Concluding Remarks}
We have shown that Algorithm \ref{alg:MOSEKNonSymmetric} computes an approximate solution to the homogeneous model in polynomial time. This gives a theoretical foundation to the algorithm by Dahl and Andersen \cite{dahl2019primal}, as implemented in MOSEK. While our scaling matrix, predictor direction, and neighborhood assumptions \labelcref{ass:MuTildeBeta,ass:TauKappaBeta,ass:TauKappaPositive,ass:XSinCone} are all taken from Dahl and Andersen, some differences remain. Most notably, MOSEK uses a higher-order corrector that depends on the third derivative of the primal barrier, where we have used \labelcref{eq:CorrectorDirectionG,eq:CorrectorDirectionTauKappa,eq:CorrectorDirectionXS}. We leave the analysis of this higher-order corrector for future research.

\section*{Acknowledgments}
The authors would like to thank Etienne de Klerk for various comments that helped to improve the presentation of this paper.

\bibliographystyle{abbrv}
\bibliography{Bibliography}

\end{document}